\theoremstyle{definition}
\newtheorem{Def}{Definition}[section]
\theoremstyle{plain}
\newtheorem{Lem}[Def]{Lemma}
\newtheorem{Cor}[Def]{Corollary}
	\newtheorem{Pro}[Def]{Proposition}
\newtheorem{Teo}[Def]{Theorem}
\newtheorem{Con}[Def]{Conjecture}
\newtheorem{TI}{Theorem}
\theoremstyle{remark}
\newtheorem{Rem}[Def]{Remark}
\newtheorem{Exa}[Def]{Example}
\newcommand{\ord}{\operatorname{ord}}
\newcommand{\Spec}{\operatorname{Spec}}
\newcommand{\vol}{\operatorname{vol}}
\newcommand{\NP}{\operatorname{NP}}
\newcommand{\lex}{\rm{lex}}
\newcommand{\mult}{\operatorname{mult}}
\newcommand{\cent}{\operatorname{centre}}
\newcommand{\Proj}{\operatorname{Proj }}
\newcommand{\slo}{\operatorname{sl}}
\newcommand{\C}{{\mathbb C }}
\newcommand{\R}{{\mathbb R }}
\newcommand{\Z}{\mathbb{Z}}
\newcommand{\Q}{\mathbb{Q}}
\newcommand{\A}{\mathbb{A}}
\renewcommand{\P}{\mathbb{P}}
\newcommand{\I}{{\cal I}}
\renewcommand{\O}{{\mathcal O}}
\newcommand{\M}{\mathfrak m}
\newcommand{\p}{{\mathfrak p}}
\newcommand{\Mor}{ \overline{\rm NE}}
\renewcommand\hat{\widehat}
\newcommand{\NO}{{Newton--Okounkov }}
\newcommand{\deq}{\ensuremath{\stackrel{\textrm{def}}{=}}}
\newcommand{\equ}{\ensuremath{\,=\,}}
\newcommand{\codim}{{\rm codim}}
\newcommand{\st}[1]{\ensuremath{\left\{ #1\right\}}}
\def\vv{\@ifnextchar[{\@withv}{\@withoutv}}
\def\@withv[#1]#2#3{v_1(#2,#3;#1)}
\def\@withoutv#1#2{v_1(#1,#2)}
\def\mm{\@ifnextchar[{\@withm}{\@withoutm}}
\def\@withm[#1]#2#3{\mu_{#1}(#2,#3)}
\def\@withoutm#1#2{\hat\mu(#1,#2)}
\begin{document}

\title{Newton--Okounkov bodies sprouting on the valuative tree}

\author[Ciliberto] {Ciro Ciliberto}
\address{Dipartimento di Matematica, Universit\`a di Roma Tor Vergata, Italy}
\email{cilibert@axp.mat.uniroma2.it}

\author[Farnik] {Michal Farnik}
\address{Jagiellonian University, Faculty of Mathematics and Computer Science, {\L}ojasiewicza~6, 30-348 Krak\'ow, Poland}
\email{michal.farnik@gmail.com}

\author[K\"uronya] {Alex K\"uronya}
\address{Goethe-Universit\"at Frankfurt am Main, Institut f\"ur Mathematik, Robert-Mayer-Str. 6-10, D-60325 Frankfurt am Main, Germany}
\email{kuronya@math.uni-frankfurt.de}

 \author[Lozovanu]{Victor Lozovanu}
 \address{Universit\'e de Caen Normandie, Laboratoire de Math\'ematiques ''N. Oresme``, Campus C\^ote de Nacre, Boulevard du Mar\'echal Juin, 14032, Caen, France}
 \email{victor.lozovanu@gmail.com}

\author[Ro\'e]{Joaquim Ro\'e}
\address{Departament de Matem\`atiques, Universitat Aut\`onoma de Barcelona,
Edifici C, Campus de la UAB, 08193 Bellaterra (Cerdanyola del
Vall\`es)} \email{jroe@mat.uab.cat}

\author[Shramov]{Constantin Shramov}
\address{Steklov Institute of Mathematics, 8 Gubkina street, Moscow 119991, Russia;
National Research University Higher School of Economics, Russia}
\email{costya.shramov@gmail.com}

%\author[Supino]{Paola Supino}
%\address{Dipartimento di Matematica e Fisica,
%Universit\'a  Roma Tre,
%Largo L. Murialdo, 1,
%00146 Roma, Italy} \email{paola.supino@uniroma3.it}

\date{}

\begin{abstract}
Given a smooth projective algebraic surface $X$, a point $O\in X$ and
a big divisor $D$ on $X$, we
consider the set of all Newton--Okounkov bodies of $D$ with respect to valuations of the field of rational functions of $X$ centred at $O$, or, equivalently, with respect to a flag $(E,p)$ which is \emph{infinitely near} to $O$, in the sense that there is a sequence of  blowups $X' \to X$, mapping the smooth, irreducible rational curve $E\subset X'$ to $O$. The main objective of this paper is to start a systematic study of the variation of these \emph{infinitesimal} Newton--Okounkov bodies as $(E,p)$ varies,
focusing on the case $X=\P^2$.
\end{abstract}

\maketitle

\tableofcontents

\section{Introduction}

The concept  of \emph{Newton--Okounkov bodies} originates in Okounkov's work \cite{Ok96}. Relying on earlier work of Newton and Khovanskii, Okounkov associates convex bodies to ample line bundles on
homogeneous spaces from a representation-theoretic point of view. In the generality we know them today, Newton--Okounkov bodies  have been introduced by Lazarsfeld--Musta\c t\u a \cite{LazMus09} and
Kaveh--Khovanskii \cite{KK09}.

Given an irreducible normal projective variety $X$ of dimension $r$ defined over an algebraically closed field $K$ of characteristic 0, a big divisor $D$ and a maximal rank valuation $v$ on the function field $K(X)$
(or, equivalently, an \emph{admissible/good  flag of subvarieties} on some proper birational model of $X$ (see \S\ref {ss:val})), we attach to these data a convex  body $\Delta_v(D)$ which encodes  in its convex geometric structure
the asymptotic vanishing behaviour  of the linear systems $|dD|$ for  $d\gg 0$ with respect to $v$.

Newton--Okounkov  bodies contain a lot of information: from a conceptual point of view, they serve as a set of `universal numerical invariants' according to a result of Jow~\cite{J}. From a more practical
angle, they reveal information about the structure of the Mori Cone of $X$ or of its blowups, about positivity properties of divisors  (ampleness, nefness, and the like,  see for instance Theorem \ref {thm:LK},
Remark~\ref {rem:LK} and~\cite{KL2}), and invariants like the volume or Seshadri constants (see~\cite{KL2, KL3}).

Not surprisingly,  the determination  of \NO bodies is extremely complicated in dimensions three and above. They can be non-polyhedral even if $D$ is ample and $X$ is a \emph{Mori dream space}
(see \cite {KLM}). We point out that the shape of $\Delta_v(D)$ depends on the choice of $v$ to a large extent: an adequate choice of a valuation can guarantee a more regular Newton--Okounkov
body \cite{AKL}.  The case of surfaces, though not easy at all, is reasonably  more tractable: the Newton--Okounkov bodies  are polygons with rational slopes, and they can be computed in terms of Zariski decompositions
(see \S\ref {ssec:no}).

In this paper we are mainly interested in \emph{infinitesimal \NO bodies}, which arise from valuations determined by flags $(E,p)$, with $p\in E$,
which are \emph{infinitely near} to  a point of the surface $X$: i.e., there is a birational morphism~\mbox{$X' \to X$} mapping the smooth, irreducible rational curve $E\subset X'$ to $O$. These \NO bodies have already been studied  in
\cite {KL2, KL3},  and their consideration is implicit in \cite{DHKRS}.  Here we intend to connect  the discussion in  \cite{DHKRS} to  infinitesimal \NO bodies.

One of the main underlying ideas of  \cite{DHKRS} is to study a certain invariant $\hat\mu$ (see~\S\ref{ssec:mu} for a definition),
which is roughly speaking an asymptotic multiplicity for quasi-monomial valuations. As such, it can be interpreted
as a function on the topological space~$\mathcal {QM}$, the \emph{valuative tree of quasi-monomial valuations}. Such  spaces were originally considered in the celebrated work of Berkovich  \cite {Ber90}, see also \cite{FJ04}). The tree $\mathcal{QM}$ is rooted, and  the root corresponds to the \emph {multiplicity valuation} centred at $O$, with infinite maximal arcs homeomorphic to $[1,\infty)$ starting from the root, and arcs sprouting from vertices corresponding to integer points (see  \cite {FJ04} and Remark \ref {tree}). The function $\hat \mu$ is  continuous along the arcs of $\mathcal {QM}$.
Interestingly enough, infinitesimal \NO bodies can be interpreted  as 2-dimensional  counterparts of~$\hat\mu$.

Here we will focus on the case  $X=\mathbb P^ 2$; the same questions on  other surfaces (general surfaces of degree $d$ in $\mathbb P^3$ for instance) are likely to   be equally interesting, but we do not treat them
in this work  in the hope that we will come back to them  in the future. A basic  property of $\hat\mu$, pointed out in  \cite{DHKRS}, is that
\[
\hat\mu(s)\,\geqslant\, \sqrt s
\]
assuming that   $s\in[1,+\infty)$ is an appropriately chosen parameter on an arc of $\mathcal {QM}$. Furthermore, equality holds unless there is a good geometric reason for the contrary, in the form of
a curve $C_s$ on $X_s$ (for $X_s$ the appropriate minimal blow-up of $X$ where the related flag  shows up) such that  the corresponding valuation takes a value higher  than~\mbox{$\deg (C_s)\cdot \sqrt s$}.
Such a curve is called \emph {supraminimal}  (see \S\ref {sec:supraminimal}).
Supraminimal curves are geometrically very particular, and give
information on the Mori cone of~$X_s$; for instance,
it is   conjectured in  \cite{DHKRS} (see also Conjecture~\ref {con:nn} below)
that along \emph{sufficiently general}
arcs of $\mathcal {QM}$, all supraminimal curves are
$(-1)$-curves. If so,
\[
\hat\mu(s) \equ \sqrt{s}  \ \ \text{for every $s\geqslant 8+1/36$},
\]
which among others implies  Nagata's celebrated  conjecture claiming that  the inverse of the $t$-point Seshadri constant of $\mathbb P^2$ equals $\sqrt t$ for  $t\geqslant 9$.

Once we fix a line $D$ in the plane, the convex geometric behaviour of the \NO body $\Delta_v(D)$  is essentially simple (it is a certain triangle) whenever  $\hat\mu$ has the expected
value (see \S\ref {ss:gen}). Otherwise $\Delta_v(D)$  exhibits more complicated features.  The interesting phenomenon which we study here is that, while $\hat \mu$ is continuous on~$\mathcal {QM}$,
the corresponding \NO bodies are not. Discontinuity takes the form that  at certain  vertices of the tree $\mathcal {QM}$, sudden discontinuous \emph{mutations} (even infinitely many of them) occur (see \S\ref{sec:supraminimal}).

Taking  into account the relation  between  \NO bodies and variation of Zariski decompositions (see Theorem \ref {thm:LM} and \cite[Theorem 1]{BKS}), the discontinuity phenomenon described above is to be
expected, since the same occurs for Zariski decompositions  in the big cone (Remark \ref {rem:disc}). The concept of a mutation is given   in  Definition \ref {def:mut} below.
The main object of our interest is the study of mutations  when the valuations move away from the root of $\mathcal {QM}$ along a \emph{fairly general} route,  and the results we have been able to obtain are collected
in \S\ref {sec:tt}. Our results  are partial  in the sense that there are intervals  in which we have been  unable to obtain  the appropriate information about mutations occurring there.
Our manuscript  is far from  conclusive,  it is simply devoted to lay the ground for future research on the subject.

We finish this introduction by observing that the general idea which lies behind all this is the following. There should be a \emph{Berkovich universal nef (or Mori) cone} $\mathcal B$
 with a continuous map $\beta: \mathcal B\to \mathcal {QM}$ such that  given a (rational) point $v\in \mathcal {QM}$, its preimage $\beta^{-1}(v)$ is the nef (or Mori) cone of the (minimal) blowup of $X$ where the flag~\mbox{$(E,p)$} related to $v$
 shows up. The variation of  \NO bodies when moving along the arcs of $\mathcal {QM}$ should provide information about $\mathcal B$.

The paper is organized as follows. In \S\ref {sec:preliminaries} we collect some basic definitions and results about valuations and \NO bodies, which we recall here to make the paper as self contained as possible.
In \S\ref {sec:valdim2} we focus on the two dimensional case, and specifically on quasi-monomial valuations, their interpretation in terms of the classical Newton--Puiseux algorithm, and the related  clusters of centres.
In this section (precisely in Remark \ref {tree}) we briefly recall the structure of the \emph{valuation tree} $\mathcal {QM}$. In \S\ref{sec:tt} we provide our computations about the infinitesimal \NO bodies.

In what follows we will mainly work over the field of complex numbers.

%%%
\begin{comment}
 \todo[inline]{Maybe we don't use it, then eventually delete it, but here it is just in case}

The following theorem sums up the main results that we'll use from \cite{DKHRS}

\begin{TI}
Let $v_s$ be a very general quasimonomial valuation on $\P^2$ with
characteristic exponent $s\in\Q$, and let $X_s$ be the minimal model
where $v_s$ has divisorial centre. $X_s$ supports an effective
anticanonical divisor if and only if
$1\le s\le 7, s=7+1/n$ for some natural number $n$, or $s=9$.

If $1\le s\le 7$, then the Mori cone $\Mor(X_s)$ is a polyhedral cone,
spanned by the classes of the exceptional
components of $X_s\rightarrow \P^2$,
the class of a particular nodal cubic,
and finitely many $(-1)$-curves (whose number is explicitly bounded).

If $s=7+1/n$ for natural $n$, then the only prime divisors $C$ in $X_s$
with $C^2\le -2$ are exceptional components, and
$\Mor(X_s)$ is a polyhedral cone
if and only if $n\le 8$.

If $1\le s\le 3$, $s=3+1/n$ for natural $n$, or $s=5$, then the monoid
of effective classes can be generated by the classes of the
components of the exceptional divisor, a particular conic,
and the $(-1)$-curves.
\end{TI}
\end{comment}
%%%

\section*{Acknowledgements}

This research was started during the workshop ``Recent advances in Linear series and
Newton--Okounkov bodies'', which took place in Padova, Italy, February 9--14, 2015. The authors enjoyed the lively and stimulating atmosphere of  that event.

%\subsection{Surfaces in $\P^3$}

\section{Preliminaries}
\label{sec:preliminaries}

 \NO bodies in the projective geometric setting have  been treated in \cite{LazMus09}, hence  this is the source we will primarily follow.

Let $X$ be an irreducible normal projective variety of dimension $r$ defined over an algebraically closed field $K$ of characteristic 0 (we will usually have the case $K=\C$ in mind), and
let $D$  be a big Cartier divisor  (or  line bundle; we may abuse terminology and identify the two concepts) on $X$.

Although one first  introduces Newton--Okounkov bodies for Cartier divisors,  the notion is numerical, even better,  it extends to big classes in  $N^1(X)_\R$ (see \cite[Proposition 4.1] {LazMus09}).
Newton--Okounkov bodies are defined with respect to a \emph{rank $r$ valuation} of the field of rational functions $K(X)$ of $X$.  We refer to  \cite[Chapter VI and Appendix 5]{ZS75} and
\cite[Chapter 8]{Cas00} for the general theory of valuations.

\subsection{Basics on Valuations}\label{ss:val}
\begin{Def}
A \emph{valuation} on $K(X)$ is a map $v:K(X)^*\rightarrow G$ where
$G$ is an ordered abelian group satisfying the following properties:
\begin{enumerate}
 \item $v(fg)=v(f)+v(g), \, \forall f, g\in K(X)^*$,
 \item $v(f+g)\geqslant \min(v(f),v(g)), \, \forall f, g\in K(X)^*$,
 \item $v$ is surjective,
 \item $v(a)=0,  \, \forall a \in K^*$.
\end{enumerate}
We will, as usual, assume that $v$ is surjective, in which
case $G$ is called the \emph{value group} of the valuation.
Two valuations $v, v'$ with value groups $G, G'$ respectively are said to be
\emph{equivalent} if there is an isomorphism $\iota:G\rightarrow G'$ of ordered groups
such that $v'=\iota\circ v$.
\end{Def}

The subring
$$R_v=\{f\in K(X)\,|\,v(f)\geqslant 0\}$$
is a \emph{valuation ring}, i.e., for all $f\in K(X)$,
 if $f\not\in R_v$  then $f^{-1}\in R_v$;
 its unique maximal ideal is $\M_v=\{f \in K(X) \,|\, v(f)>0\}$ and
the field $K_v=R_v/\M_v$ is called the \emph{residue field} of $v$.
Two valuations $v, v'$ are equivalent if and only if
$R_v=R_{v'}$ \cite[VI, \S8]{ZS75}.

\begin{Rem} Let $L$ be a line bundle on $X$. Given $f\in H^0(X,L)-\{0\}$,  it can be seen as a non-zero element in $K(X)$, therefore  one can consider $v(f)$. If $D=(f)$, then one sets $v(D):=v(f)$.
Thus, valuations can be considered to assume values on divisors;
effective divisors take nonnegative values.
\end{Rem}

\begin{Def}The \emph{rank} of a valuation $v$  is the minimal non-negative integer~$r$ such that the value group is isomorphic to an ordered subgroup of $\mathbb{R}^r_{\lex}$ (i.e. $\mathbb R^r$ with the \emph{lexicographic} order).
One can then write
\[
v(f)=(v_1(f),v_2(f),\dots,v_r(f))
\]
with $v_i:K(X)^*\rightarrow  \R$ for every integer
$i$ with $1\leqslant i\leqslant r$.

For every integer
$i$ with $1\leqslant i\leqslant r$, the \emph{$i$-th truncation} of $v$
is the rank $i$ valuation
\[
v|_i(f)=(v_1(f),v_2(f),\dots,v_i(f)).
\]

The \emph{trivial valuation}, defined as $v(f)=0$ for all $f\ne 0$, has rank zero;
it can be considered as the $0$-th truncation of all valuations $v$.
\end{Def}

\begin{Rem}
The rank of every valuation on $K(X)$ is bounded by $r=\dim (X)$,
and every valuation
of maximal rank is \emph{discrete}, i.e., it has a  value group  isomorphic to
$\Z^{r}_{\lex}\subset \mathbb{R}^{r}_{\lex}$
\cite[VI, \S10 and \S14]{ZS75}.
Whenever $v$ is a valuation of maximal rank,
one may assume that  the value group of $v$
\emph{equals} $\Z^{r}_{\lex}$ up to equivalence under  the action of some order-preserving (i.e., lower-triangular) element of ${\rm GL}(r,\mathbb R)$.
% (see \cite[VI, \S14, p. 49]{ZS75}).
\end{Rem}

\begin{Rem} The \emph{rational rank}  of the valuation $v$ is the dimension of the $\mathbb Q$-vector space $G\otimes_\mathbb Z\mathbb Q$, where $G$ is the value group of $v$ (see \cite[\S10, p. 50]{ZS75}).
A valuation $v$ can be of rank 1 and rational rank $r>1$. The standard example is in \cite[\S14, Example 1, p. 100]{ZS75} (see also Remark \ref {rem:qm} below).
\end{Rem}

By \cite[VI, \S10, Theorem 15]{ZS75}, the rank of a valuation $v$ equals the Krull
dimension of its valuation ring $R_v$. More precisely, the ideals in $R_v$ are
totally ordered by inclusion, and if the rank is $r$, then the prime ideals of
$R_v$ are
\[
0=\p_0\subsetneq \p_1 \subsetneq \dots\subsetneq \p_r=\M_v, \quad \text{where}\quad \p_i=\{f \in R_v \,|\, v|_i(f)>0\}.
\]
The valuation rings of the truncations satisfy reverse inclusions
\[
K(X)=R_{v|_0}  \supsetneq R_{v|_1}  \supsetneq \dots\supsetneq R_{v|_r}=R_v
\]
as they are the localizations $R_{v|_i}=(R_v)_{\p_i}$.

By the valuative criterion of properness \cite[II, 4.7]{HAG},
since $X$ is projective, there is a (unique)
morphism
$$\sigma_{X,v}:\Spec (R_v)\rightarrow X$$
which, composed with
$\Spec (K(X)) \rightarrow \Spec( R_v)$, identifies $\Spec (K(X))$
as the \emph{generic point} of $X$.
 The image in $X$ of the closed point of $\Spec (R_v)$
(or the irreducible subvariety which is its closure)
is called the \emph{centre} of $v$ in $X$, and we denote it by $\cent_X(v)$. When the variety $X$
is understood, we shall write $\cent(v)=\cent_X(v)$. A valuation $v$ of rank $r$
determines a flag
\begin{equation}\label{eq:flag}
 X=\cent(v|_0) \supsetneq \cent(v|_1) \supseteq
\dots \supseteq \cent(v|_r) =\cent(v),
\end{equation}
and $\cent(v|_i)=\overline{\sigma_{X,v}(\p_i)}$.
Note that some of the inclusions may be  equalities.

For a valuation of rank $r>1$,  the centre
of the first truncation $v|_1$ is called the \emph{home} of $v$,
following \cite{BFJ08}.

\begin{Exa} (Divisorial valuations)\label{divisorial}
  If $\cent(v)$ is a divisor $V$, then $v$ is equivalent to the valuation that assigns to each rational function its order of vanishing along~$V$. Moreover, the residue field $K_v$ is
the function field of $V$ (see \cite[VI, \S14]{ZS75}).\end{Exa}

%%%%%
\begin{comment}
For every non-negative element $m$ of the value group, one can consider the ideal sheaves
\[
\I_m(v)=\{f\in \O_X\,|\, v(f)\ge m \}, \quad \text{and} \quad
\I_m^+(v)=\{f\in \O_X\,|\, v(f)> m \},
\]
which are called \emph{valuation ideal sheaves} on $X$ (with respect to $v$). The support of a valuation ideal is always
one of the subvarieties in the flag \eqref{eq:flag}\todo{reference?}. The closed subscheme defined by $\I_0^+$ is $\cent(v)$.
\end{comment}
%%%%%

\begin{Exa}(Valuation associated to an admissible flag)\label{rem:flag} A \emph{full flag} $Y_\bullet$ of irreducible subvarieties
\begin{equation}\label{eq:flag2}
 X \equ Y_0 \supset Y_1 \supset \ldots\supset Y_{r-1} \supset Y_r
\end{equation}
is called \emph{admissible}, if $\codim_X (Y_i)=i$ for all $0\leqslant i\leqslant\dim (X)=r$, and $Y_i$ is normal and smooth at the point $Y_r$, for all $0\leqslant i\leqslant r-1$. The flag is called \emph{good} if $Y_i$ is smooth for all $i=0,\ldots,r$.

 Let $\phi\in K(X)$ be a non-zero rational function, and set
 \[
  \nu_1(\phi) \deq \ord_{Y_1}(\phi) \quad \text{and}   \quad  \phi_1 \deq \left.\frac{\phi}{g_1^{\nu_1(\phi)}}\right|_{Y_1}
 \]
where $g_1=0$ is a local equation of $Y_1$ in $Y_0$ in an open Zariski subset around the point $Y_r$.  Continuing this way via
\[
 \nu_i(\phi) \deq \ord_{Y_i}(\phi_{i-1})\ ,\  \phi_i \deq \left.\frac{\phi_{i-1}}{g_i^{\nu_i(\phi_{i-1})}}\right|_{Y_i} \quad \text {for all}\quad i=2,\ldots, r,
\]
where $g_{i}=0$ is a local equation of $Y_i$ on $Y_{i-1}$ around $Y_r$,
we arrive at a function
\[
 \phi \mapsto \nu_{Y_\bullet}(\phi) \deq (\nu_1(\phi),\dots, \nu_r(\phi))\ .
\]
One verifies that $\nu_{Y_\bullet}$ is a valuation of maximal rank, and that the flag \eqref{eq:flag} given by
the centres of the truncations of $\nu_{Y_\bullet}$ coincides with the flag $Y_\bullet$ in \eqref {eq:flag2}.
\end{Exa}

\begin{Pro}\label{iterated_divisorial}
Let $v$ be a valuation of maximal rank $r=\dim (X)$ whose flag of centres $Y_\bullet$
in  \eqref {eq:flag}
is admissible.
Then $v$ is equivalent to the flag valuation $\nu_{Y_\bullet}$.
\end{Pro}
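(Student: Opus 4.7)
The proposal is to proceed by induction on $r=\dim X$, mirroring the recursive construction of the flag valuation in Example \ref{rem:flag}. For the base case $r=1$, $X$ is a smooth curve, $Y_1$ is a point, and $v$ is a rank-one valuation centred at $Y_1$; Example \ref{divisorial} then gives directly that $v$ is equivalent to $\ord_{Y_1}=\nu_{Y_\bullet}$.

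For the inductive step, assume the proposition for rank smaller than $r$. The truncation $v|_1$ has rank one with centre the divisor $Y_1$, so by Example \ref{divisorial} it is equivalent to $\ord_{Y_1}$; after rescaling via an order-preserving automorphism of $\Z^r_{\lex}$ we may assume $v_1=\ord_{Y_1}$. The classical composition-of-valuations construction then yields an induced rank-$(r-1)$ valuation $\bar{v}$ on the residue field $K(Y_1)=R_{v|_1}/\M_{v|_1}$, defined by
\[
\bar{v}(\bar{f}) \deq (v_2(f),\ldots,v_r(f))
\]
for any lift $f\in R_{v|_1}$ of $\bar{f}$ with $v_1(f)=0$; well-definedness follows from the strict ultrametric inequality, since two such lifts differ by an element of $\p_1$ whose $v$-value strictly exceeds $v(f)$ in lex order. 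The flag of centres of $\bar{v}$ in $Y_1$ is the tail $Y_1\supset Y_2\supset\ldots\supset Y_r$, which is admissible because $Y_\bullet$ is and $Y_1$ is smooth at $Y_r$.

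By the inductive hypothesis, $\bar{v}$ is equivalent to the flag valuation on $Y_1$ attached to the tail flag. Since $\nu_{Y_\bullet}$ itself decomposes in exactly the same way by construction---its first coordinate is $\ord_{Y_1}$, and its remaining coordinates are precisely the flag valuation of the tail flag evaluated on $(\phi/g_1^{\nu_1(\phi)})|_{Y_1}$---we conclude that $v$ and $\nu_{Y_\bullet}$ share the same rank-$1$ component and induce equivalent rank-$(r-1)$ valuations on $K(Y_1)$; assembling these via a lower-triangular element of $\mathrm{GL}(r,\R)$ yields $v\sim\nu_{Y_\bullet}$. The main obstacle is the composition-of-valuations bookkeeping: checking that $\bar{v}$ is well-defined, identifying its flag of centres precisely, and combining the two rank-level equivalences into a single rank-$r$ equivalence. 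Each step is standard in valuation theory, but requires some care in the notation.
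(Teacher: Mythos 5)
Your proposal is correct and follows essentially the same route as the paper's proof: induction on the rank, identifying $v|_1$ with $\ord_{Y_1}$ via Example \ref{divisorial}, passing to the induced rank-$(r-1)$ valuation on the residue field $K(Y_1)$ with the tail flag as its flag of centres, and applying the inductive hypothesis. The only cosmetic difference is at the final assembly, where the paper concludes by observing that $v$ and $\nu_{Y_\bullet}$ have the same valuation ring (hence are equivalent), while you invoke a lower-triangular change of value group; both are valid.
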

\begin{proof}

By induction on $r$. For $r=0$ there is nothing to prove, so assume $r\geqslant 1$.
Remark \ref{divisorial} tells us that
$v|_1=\nu_{Y_\bullet}|_1$ (up to equivalence),
and that their common residue field is $K(Y_1)$, with
$Y_1=\cent_{X}(v|_1)$.
The valuation $v$ (resp. $\nu_{Y_\bullet}$) induces a valuation
$\bar v$ (resp. $\bar\nu_{Y_\bullet}$) on
$K(Y_1)=R_{v|_1}/\M_{v|_1}$ as follows. For any
$$0\neq \bar f\in R_{v|_1}/\M_{v|_1},$$
there is an $f\in K(X)$ sitting in $R_{v|_1}$
whose class modulo $\M_{v|_1}$ is $\bar f$. Then one sets
$\bar v(\bar f)=v(f)$ (similarly for $\bar\nu_{Y_\bullet}$) and one verifies that this is well defined. The valuations $\bar v$ and $\bar\nu_{Y_\bullet}$ have maximal
rank $r-1$, and their flag of centres is $\bar Y_\bullet$ for both, with
$\bar Y_i=Y_{i+1}$ for $i=0, \dots, r-1$, so by  induction
they are equivalent.

Finally, the valuation ring of $v$ (resp. of $\nu_{Y_\bullet}$) consists  of
those $f$ in $K(X)$ with $v|_1(f)=\nu_{Y_\bullet}|_1(f)>0$ and of those
$f$ in $K(X)$ with $v|_1(f)=\nu_{Y_\bullet}|_1(f)=0$ and $\bar v(\bar f)\geqslant 0$ (resp. $\bar\nu_{Y_\bullet}(\bar f)\geqslant 0$).
Since $\bar v$ and $\bar\nu_{Y_\bullet}$ are equivalent, then the
valuation rings of $v$ and $\nu_{Y_\bullet}$ are the same, as claimed.
\end{proof}

Valuations of maximal rank are very well known
(see \cite[VI, \S14]{ZS75},
 \cite[Examples 5 and 6] {Vaq}) and Theorem \ref{flag_valuation} below
is presumably obvious for  experts working in the area of   resolution of singularities.
We include a proof as we lack a precise reference for it.
For the case of surfaces, see \S\ref{flagmodel} below.

\begin{Teo} \label{flag_valuation}
 Let $X$ be a normal projective variety, and $v$ a valuation
of the field $K(X)$ of maximal rank $r=\dim (X)$. There exist a proper birational morphism
$\pi:\tilde X \rightarrow X$  and a good flag
\[
Y_\bullet : \tilde X= Y_0 \supset Y_1 	\supset \dots \supset Y_r
\]
such that $v$ is equivalent to the valuation associated to $Y_\bullet$.
\end{Teo}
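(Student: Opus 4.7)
The plan is to proceed by induction on $r = \dim(X)$, with the base case $r = 0$ being trivial. For $r \geqslant 1$ the strategy is to peel off $v|_1$ as a divisorial valuation on $X$, pass to its residue variety to obtain a maximal-rank valuation in dimension $r-1$, invoke induction there, and finally glue everything back into a model of $X$.

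Since $v$ has maximal rank, up to equivalence its value group is $\Z^r_{\mathrm{lex}}$ and thus the truncation $v|_1$ is a rank-one discrete valuation of $K(X)$, hence divisorial by classical theory \cite[Ch.~VI]{ZS75}. After passing to a normal projective model carrying a prime divisor along which $v|_1$ is the order of vanishing, and applying Hironaka's resolution of singularities both to the ambient variety and to the pair with this divisor, one obtains a smooth projective birational model $X_1 \to X$ and a smooth prime divisor $Y_1 \subset X_1$ satisfying $v|_1 = \ord_{Y_1}$.

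The valuation $v$ now descends to a valuation $\bar v$ on the residue field $K_{v|_1} \cong K(Y_1)$ by the recipe used in the proof of Proposition~\ref{iterated_divisorial}: for $0 \neq \bar f \in K(Y_1)$, pick a lift $f \in R_{v|_1}$ and set $\bar v(\bar f) = (v_2(f),\dots,v_r(f))$. This is well defined and makes $\bar v$ a maximal-rank valuation on $K(Y_1)$ of dimension $r-1$. By the inductive hypothesis there exist a proper birational morphism $\pi_2 : \tilde Y_1 \to Y_1$ and a good flag $\bar Y_\bullet$ on $\tilde Y_1$ with $\bar v$ equivalent to $\nu_{\bar Y_\bullet}$.

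To globalize $\pi_2$, one applies a sufficiently strong form of Hironaka's theorem (or of the Abramovich--Karu--Matsuki--W{\l}odarczyk factorization) to write it as a composition of blowups along smooth centres, each contained in the iterated strict transform of $Y_1$, and performs the same blowups on the ambient $X_1$. Since every such centre is smooth and sits inside the smooth strict transform of $Y_1$, this transform remains smooth throughout and is canonically identified with $\tilde Y_1$ in the resulting smooth model $\tilde X \to X_1$; moreover, no centre contains the generic point of this transform, so $v|_1 = \ord_{\tilde Y_1}$ still holds on $\tilde X$. Setting $Y_0 = \tilde X$, $Y_1 = \tilde Y_1$, and $Y_i = \bar Y_{i-1}$ for $i \geqslant 2$ yields a good flag on $\tilde X$ whose terms are precisely the centres of the truncations of $v$ (for $i \geqslant 2$ this uses that the centre of $v|_i$ on $\tilde X$ is the centre of $\bar v|_{i-1}$ on $\tilde Y_1$), whence Proposition~\ref{iterated_divisorial} delivers $v \sim \nu_{Y_\bullet}$. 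The hard part will be this globalization step: one needs to realise $\pi_2$ through blowups whose smooth centres lie \emph{inside}, rather than transversal to, the iterated strict transforms of $Y_1$, which is the only place where a genuinely non-trivial appeal to resolution of singularities is required.
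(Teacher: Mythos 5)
Your route is genuinely different from the paper's: you induct on the dimension, peeling off $v|_1$ as a divisorial valuation and pushing the problem down to the residue field $K(Y_1)$, whereas the paper works directly on $X$, first producing (via the projective closure of the point $[1:f_1:\dots:f_r]$ with $f_i\in\p_i\setminus\p_{i-1}$) a model on which the centres of all truncations already form a full flag, and then making that flag good by iterated embedded resolution of each $\cent(v|_i)$ inside the ambient variety. The problem is that your argument has a genuine gap exactly where you suspect it: the globalization step. The inductive hypothesis hands you an \emph{arbitrary} proper birational morphism $\pi_2:\tilde Y_1\to Y_1$; to "perform the same blowups on the ambient $X_1$" you must first factor $\pi_2$ as a tower of blowups along smooth centres. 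For a \emph{given} birational morphism of smooth varieties this is the strong factorization problem, open in dimension $\geqslant 3$; the Abramovich--Karu--Matsuki--W{\l}odarczyk theorem only provides a zig-zag of blowups \emph{and blowdowns}, which cannot be transported to $X_1$ in the way you describe. One can of course dominate $\pi_2$ by a composition of smooth blowups $\tilde Y_1'\to\tilde Y_1\to Y_1$, but then the good flag supplied by induction lives on $\tilde Y_1$ rather than on $\tilde Y_1'$, and you would have to re-resolve the flag of centres on $\tilde Y_1'$ --- at which point you are redoing, inside a fixed ambient model, precisely the iterated embedded resolution that constitutes the paper's proof, rather than invoking the inductive hypothesis as a black box.

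A second, smaller, gap: you justify that $v|_1$ is divisorial only by its being discrete of rank one. That implication is false in general (the valuation $f\mapsto\ord_t f(t,e^t-1)$ on $K(x,y)$ is discrete of rank one with residue field $K$, hence not divisorial); what is needed is that the residue field $K_{v|_1}$ has transcendence degree $r-1$ over $K$. This does hold here, because $K_{v|_1}$ carries the residual valuation $\bar v$ of rank $r-1$, so Abhyankar's inequality applied to $v|_1$ and to $\bar v$ squeezes the transcendence degree to exactly $r-1$ --- but this must be said, since it is the only reason the divisoriality claim is true. The dimension-induction idea itself is coherent and mirrors the structure of Proposition~\ref{iterated_divisorial}; what is missing is a correct mechanism for extending a modification of the divisor $Y_1$ to a modification of the ambient variety while keeping every member of the resulting flag smooth, and that is exactly the difficulty the paper's construction is designed to bypass.
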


\begin{proof} Denote by $\zeta\in X$ the generic point,
and set $\mathbb K=K(\zeta)=K(X)$.
Let
\[
0 =\p_0\subset \p_1 \subset \dots\subset \p_r
\]
be the maximal chain of prime ideals in $R_v$,
and choose $f_1, \dots, f_r \in R_v\subset \mathbb K$ so that each
$f_i\in \p_i\setminus \p_{i-1}$.
Fix projective coordinates $[x_0:\dots:x_r]$ in
$\P^r_\mathbb K \subset \P^r_K \times X$, and let %$\xi$ be the point
$\xi=[1:f_1: \dots: f_r]\in \P^r_\mathbb K$.
Let $X_0$ be the Zariski closure of $\xi$ in $\P^r_K \times X$.
Since its generic
point is $\xi$ (which is a closed $\mathbb K$-point in $\P^r_{\mathbb K}$),  it has residue field equal to $\mathbb K$, and the induced projective morphism $X_0 \rightarrow X$ is
birational.

For $i=1,\ldots,r$, the restriction of the rational function $x_i/x_0$ to $X_0$ is $f_i$, which has positive
$v$-value. Therefore the centre of $v$ in $X_0$ lies in $[1:0:\dots:0]\times X$, and its
local ring $\O_{\cent_{X_0}(v)}$ contains $f_1, \dots, f_r$. Hence
\[f_i \in \left(\p_i \cap \O_{\cent_{X_0}(v)}\right) \setminus
\left( \p_{i-1} \cap \O_{\cent_{X_0}(v)}\right),\]
so that $\p_i \cap \O_{\cent_{X_0}(v)} \ne \p_{i-1} \cap \O_{\cent_{X_0}(v)}$. Since
$\p_i \cap \O_{\cent_{X_0}(v)}=\sigma_{X_0,v}(\p_i)$ as schematic points in $X_0$,
it follows that the centres of the truncations of $v$ are all distinct. Since there
are as many truncations as the dimension of $X$, the flag
\eqref{eq:flag} in $X_1$ is a full flag, i.e., $\dim (\cent_{X_0}(v|_i))=r-i$
for $i=0,\dots,r$.
Every birational model of $X$ dominating $X_0$ will again have this property.

The flag of centres of the truncations in
$X_0$ is usually not good (or even admissible), as $X_0$ is not necessarily
smooth (not even normal) at $\cent_{X_0}(v)$.
Using Hironaka's
resolution of singularities we know that there is a birational
morphism  $X_1 \rightarrow X_0$, obtained as a composition of blowups
along smooth centres, with $X_1$ a smooth projective variety.  On
$X_1$ we have a full flag like \eqref{eq:flag} whose codimension 1 term,
$\cent_{X_1}(v|_1)$, may be singular. But again there is a
composition of blowups along smooth centres (contained in $\cent_{X_1}(v|_1)$)
that desingularizes it; we apply these blowups to $X_1$, to obtain
$X_2 \rightarrow X_1$. Since the blowup of a smooth variety along a smooth
centre is again smooth, $X_2$ stays smooth, and the divisorial part of
the full flag \eqref{eq:flag} in $X_2$ is now also smooth. By resolving
sequencially the singularities of $\cent_{X_2}(v|_2)$, \dots, $\cent_{X_{r-1}}(v|_{r-1})$
we arrive at a model $\tilde X=X_r$ where the flag
\[
 Y_\bullet:\tilde X=\cent_{\tilde X}(v|_0) \supsetneq \cent_{\tilde X}(v|_1) \supsetneq
\dots \supsetneq \cent_{\tilde X}(v|_r) =\cent_{\tilde X}(v),
\]
is good. Now
by Proposition \ref{iterated_divisorial}, the valuation $v$ is equivalent to
$\nu_{Y_\bullet}$ as claimed.
\end{proof}
\begin{Rem}
 We work here in characteristic 0, but a suitable (weaker)  version of Theorem
 \ref{flag_valuation} still holds in any characteristic. The same proof works, by replacing Hironaka's resolution with a sequence of blowups along nonsingular centres given by Urabe's \emph{resolution of maximal rank
 valuations} \cite{Ura}. The members of the resulting flag are not necessarily
 smooth, but they are non-singular at the centre.
\end{Rem}
In the situation of Theorem \ref{flag_valuation}, we call $Y_\bullet$ the \emph{good flag associated} to $v$ in the model
$\tilde X$. The choice of a flag is not unique, but for two models, the induced rational map between them maps the associated flags into one another.

\subsection{\NO bodies}

\begin{Def}
Let $X$ be an irreducible normal projective variety, $D$ a big divisor on $X$, and $v$ a valuation
of $K(X)$ of maximal rank $r=\dim (X)$. Define the \emph{\NO
body} of $D$ with respect to $v$ as follows
\begin{equation}\label{eq:OB}
\Delta_v(D) \deq\text{ convex hull}
\overline{\left\{\bigcup_{k\in\mathbb{Z}_{>0}}\left. \left\{\frac{v(f)}{k}\right| f\in H^0(X,\O_X(kD))-\{0\}\right\}\right\}}.
\end{equation}
The points in $\Delta_v(D)\cap \mathbb Q^r$ of the form  $\frac{v(f)}{k}$ with $f\in H^0(X,\O_X(kD))-\{0\}$ for some integer $k>0$ are called \emph{valuative points}.
 \end{Def}

\begin{Rem}\label{rem:val}
The properties of valuations yield that if $A$, $B$ are two distinct valuative points, then any rational point on the segment joining $A$ and $B$ is again a valuative point. This implies that valuative points are dense in $\Delta_v(D)$ (see \cite [Corollary 2.10]{KL}, for the surface case; the proof is analogous in general).  Therefore in \eqref{eq:OB} it suffices to take the closure in the Euclidean topology of  $\R^r$.

Alternatively, one defines the \NO body of $D$ with respect to $v$ as
\[
\Delta_{v}(D) \deq  \overline{v\big(\{D' \ | \ D'\equiv D \textup{ effective }\Q\textup{-divisor}\}\big)},
\]
where $\equiv$ is the $\Q$-linear equivalence relation.
By  \cite[Proposition~4.1]{LazMus09},  one can replace $\Q$-linear equivalence by  \emph{numerical equivalence}. Hence, one can define $\Delta_{v}(\zeta)$ for any numerical class $\zeta$ in the \emph{big cone} ${\rm Big}(X)\subset N^1(X)_\mathbb R$ of $X$.
\end{Rem}

%\begin{Rem}
Our definition  differs from the one in  \cite{LazMus09} in that we use valuations of maximal rank instead of those defined by admissible  flags on $X$. But, an admissible flag on $X$ gives rise to a valuation of maximal rank on $K(X)$ by Example \ref {rem:flag} (see also \cite{KK09}). Conversely, by Theorem  \ref{flag_valuation}, any valuation of maximal rank arises  from an admissible flag on a suitable proper birational model of $X$; thus maximal rank valuations are the birational version of admissible flags. In conclusion,
all known results for \NO bodies defined in terms of flag valuations carry over to \NO bodies in terms of valuations of maximal rank, modulo passing to some different birational
model.

In \cite{Bou12,KK09} one considers \NO bodies
defined by valuations
of maximal rational rank, an even more general situation
which we will not consider here.

%\end{Rem} 

\subsection{Some properties of \NO bodies}\label{ssec:no}

A very important feature of Newton--Okounkov bodies is that they give rise to a `categorification' of various asymptotic invariants associated to
line bundles (see for instance \cite[Theorem C]{KL} for the corresponding statement for moving Seshadri constants). Recall that  the  \emph{volume} of a
Cartier divisor $D$ on an irreducible normal projective variety $X$ of dimension $r$ is defined as
\[ \vol (D)\deq \limsup_{m\to\infty}\frac{\dim \big (H^0(X,\O_X(mD))\big )}{m^r/r!} \ .\]

\begin{Teo} [Lazarsfeld--Musta\c{t}\u{a}, {\cite[Theorem 2.3]{LazMus09}}]\label{thm:LM2}
 Let $X$ be an irreducible normal projective variety of dimension  $r$, let $D$ be a big divisor on $X$, and let $v$ be a valuation of the field $K(X)$
with value group $\Z^r_{\lex}$. Then
\[\vol (\Delta_v(D)) = \frac{1}{r!}\vol(D), \]
where the volume on the left-hand side denotes the Lebesgue measure
in $\mathbb{R}^r$.
\end{Teo}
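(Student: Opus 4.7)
The plan is to compare two asymptotic lattice-point counts: on one hand the dimensions $h^0(X,\O_X(kD))$, which by the very definition of the volume grow like $k^r\vol(D)/r!$; on the other, the cardinalities of the $v$-images of the spaces of sections, which a convex-geometric argument will identify asymptotically with the Lebesgue measure of $k\Delta_v(D)$. Matching the two asymptotics yields the formula.

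First I would carry out the \emph{identification of valuative points with dimensions of sections}. Consider the graded semigroup
\[
\Gamma(D)\deq\bigcup_{k\geqslant 0}\{(v(f),k)\,:\,f\in H^0(X,\O_X(kD))\setminus\{0\}\}\ \subset\ \Z^r\times\N,
\]
and denote by $\Gamma(D)_k$ its slice at height $k$. For any $\alpha\in\Z^r_{\lex}$, the subset $F_{\geqslant\alpha}\deq\{f\in H^0(X,\O_X(kD))\,:\,v(f)\geqslant\alpha\}\cup\{0\}$ is a $K$-vector subspace. Given two non-zero $f,g\in F_{\geqslant\alpha}$ with $v(f)=v(g)=\alpha$, the quotient $f/g$ lies in $R_v\setminus\M_v$; its image $c$ in the residue field $K_v$ is non-zero, and the latter equals $K$ since, by Theorem~\ref{flag_valuation}, $v$ is equivalent to a good flag valuation whose terminal flag member $Y_r$ is a closed $K$-point. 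Hence $v(f-cg)>\alpha$, showing that $F_{\geqslant\alpha}/F_{>\alpha}$ has dimension at most one, whence $\#\Gamma(D)_k=h^0(X,\O_X(kD))$.

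The second step is the \emph{convex-geometric asymptotic}: one must show
\[
\lim_{k\to\infty}\frac{\#\Gamma(D)_k}{k^r}\equ\vol(\Delta_v(D))\ .
\]
The upper bound is the standard lattice-point count for bounded convex sets, since $\tfrac{1}{k}\Gamma(D)_k\subset\Delta_v(D)\cap\tfrac{1}{k}\Z^r$. The lower bound is the substantive input and requires an Okounkov--Kaveh--Khovanskii type theorem for graded semigroups: one verifies that $\Gamma(D)$ generates a subgroup of $\Z^{r+1}$ of full rank $r+1$ (using that $v$ has maximal rank and $D$ is big, so that enough sections of high multiples of $D$ produce valuative points in general position) and that the cone generated by $\Gamma(D)$ is strongly convex with non-empty interior over each height. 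From this one deduces that translates of a fundamental domain eventually fill $k\Delta_v(D)$ up to a boundary error of order $k^{r-1}$.

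The main obstacle lies in this second step, which is not a formal property of the Newton--Okounkov body but rather a genuine density theorem for integral points in graded semigroups; the careful verification of the hypotheses for the Okounkov--Kaveh--Khovanskii machinery (finite generation after normalization, the rank condition, and the strong convexity) is where the real work sits. Once it is in place, the conclusion is immediate:
\[
\frac{\vol(D)}{r!}\equ\lim_{k\to\infty}\frac{h^0(X,\O_X(kD))}{k^r}\equ\lim_{k\to\infty}\frac{\#\Gamma(D)_k}{k^r}\equ\vol(\Delta_v(D)),
\]
which is the desired equality.
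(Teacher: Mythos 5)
Your outline is correct and reproduces essentially the proof of the cited source: the paper itself does not prove this statement but quotes it from \cite{LazMus09}, whose argument is exactly your two steps (the one-dimensional-leaves count identifying $\#\Gamma(D)_k$ with $h^0(X,\O_X(kD))$, followed by the Okounkov--Khovanskii density theorem for graded semigroups). One small caution: for the limit to equal the Lebesgue measure on the nose you need $\Gamma(D)$ to generate all of $\Z^{r+1}$ as a group, not merely a full-rank (possibly finite-index) subgroup; this is precisely what Lemma~2.2 of \cite{LazMus09} verifies using the bigness of $D$ together with the flag structure.
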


\begin{Rem} Although the proof of Theorem~2.3 from \cite{LazMus09} takes the admissible flags viewpoint, the statement remains valid for \NO bodies defined in  terms of valuations of maximal rational rank  (with value group equal to
$\Z^r$) by the remark above  (see also \cite[Corollaire 3.9]{Bou12}).
\end{Rem}

Since the main focus of our work is on the surface case, we will concentrate on surface-specific properties of Newton--Okounkov bodies.

\begin{Teo}[K\"uronya--Lozovanu--MacLean, \cite{KLM}]
If $\dim (X)=2$, then every \NO body is a polygon.
\end{Teo}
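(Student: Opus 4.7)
The plan is to first reduce the statement to a flag valuation on a smooth birational model via Theorem~\ref{flag_valuation}, then invoke the surface-specific slicing description of the Newton--Okounkov body à la Lazarsfeld--Musta\c t\u a in terms of Zariski decomposition, and finally invoke the variation of Zariski decomposition of Bauer--Küronya--Szemberg \cite[Theorem 1]{BKS} to conclude that the defining functions are piecewise linear with only finitely many pieces.

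By Theorem~\ref{flag_valuation}, after passing to a proper birational model, we may assume that $v = \nu_{(C,p)}$ is the flag valuation associated to a good flag $(C,p)$ on a smooth projective surface $X$, with $C$ a smooth irreducible curve and $p\in C$; the Newton--Okounkov body is preserved under this reduction. Setting $\nu\equ\ord_C(N(D))$ and $\mu\equ\sup\st{t\mid D-tC \text{ is big}}$, and writing the Zariski decomposition $D-tC \equ P_t + N_t$ for $t\in[\nu,\mu)$, one has the slicing description
\[
\Delta_v(D) \equ \bigl\{(t,y)\in\R^2 \,:\, \nu\leqslant t\leqslant \mu,\ \alpha(t)\leqslant y\leqslant \beta(t)\bigr\},
\]
with $\alpha(t) \equ \ord_p(N_t|_C)$ and $\beta(t) \equ \alpha(t) + P_t\cdot C$. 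Hence it suffices to prove that both $\alpha$ and $\beta$ are piecewise linear on $[\nu,\mu]$ with only finitely many pieces.

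The crucial input is now \cite[Theorem 1]{BKS}: the big cone of $X$ admits a locally finite decomposition into \emph{Zariski chambers} on each of which the support of the negative part of the Zariski decomposition is constant, and both $P_t$ and $N_t$ depend linearly on the numerical class. Since the line segment $\st{D-tC\mid t\in[\nu,\mu]}$ is compact, it meets only finitely many such chambers. Consequently there exist $\nu \equ t_0<t_1<\dots<t_n \equ \mu$ such that on each subinterval $[t_{i-1},t_i]$ both $P_t$ and $N_t$ are affine-linear in $t$; then so are $\alpha$ and $\beta$. Therefore $\Delta_v(D)$ is a finite union of trapezoids with rational slopes, and is in particular a rational polygon.

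The main obstacle is precisely this finiteness step: a priori, as $t$ sweeps the closed interval $[\nu,\mu]$, infinitely many distinct irreducible curves could enter the support of $N_t$, and the resulting polygonal region would degenerate into a curved shape. Ruling this out is the content of \cite[Theorem 1]{BKS}; alternatively, one shows directly that any curve $E$ appearing in some $N_t$ for $t\in[\nu,\mu]$ must satisfy $E^2<0$ together with a numerical inequality of the form $E\cdot(D-tC)\leqslant 0$, and then uses a bounded-negativity-style argument on the surface to limit such $E$ to a finite collection. This same finiteness governs the combinatorial complexity, i.e., the number of vertices, of the resulting polygon.
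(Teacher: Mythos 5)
Your overall strategy --- reduce to a flag valuation on a smooth birational model, apply the Lazarsfeld--Musta\c t\u a slicing via Zariski decomposition, and then show that $\alpha$ and $\beta$ are piecewise linear with finitely many pieces --- is exactly the route of \cite{KLM}. The gap is in the finiteness step. The chamber decomposition of \cite[Theorem 1]{BKS} is locally finite only on the \emph{big} cone, which is open; the endpoint $D-\mu C$ of your segment lies on the boundary of the big cone (it is pseudo-effective but not big), so $\st{D-tC \mid t\in[\nu,\mu]}$ is not a compact subset of the big cone, and local finiteness does not rule out infinitely many chamber crossings accumulating at $t=\mu$. Your proposed alternative does not close this gap either: a bounded-negativity-type statement (itself conjectural in general) would only bound $E^2$ from below, and a surface can carry infinitely many irreducible curves of negative self-intersection (e.g.\ the $(-1)$-curves on a blow-up of $\P^2$ at nine very general points), so no finiteness of the relevant collection of curves follows from it.

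The correct finiteness input, which is what \cite{KLM} actually use and which this paper records as Remark \ref{rem:disc}, is the monotonicity of the negative part: since one subtracts an increasing multiple of the effective divisor $C$, one has $N_{t_1}\leqslant N_{t_2}$ for $\nu\leqslant t_1\leqslant t_2\leqslant\mu$, so the support of $N_t$ only grows along the segment. By Zariski's lemma the components of any negative part have negative definite intersection matrix, hence are linearly independent in $N^1(X)_\R$, so $N_t$ has at most $\rho(X)$ components for every $t$. Consequently the support of $N_t$ changes at most $\rho(X)$ times on all of $[\nu,\mu]$ (including at the endpoint), and on each of the finitely many subintervals where the support is constant, $N_t$ is cut out by a fixed linear system with invertible Gram matrix, so $N_t$, $P_t$, $\alpha$ and $\beta$ are affine in $t$. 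One further small correction: the resulting body is a polygon whose edges have rational slopes, but it need not be a \emph{rational} polygon --- the two vertices lying on the line $t=\mu$ may have irrational coordinates, as the paper points out in the remark following the theorem.
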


If $\dim X=2$, then an admissible flag  is given by a pair $(C,x)$, where $C$ is a curve, and $x\in C$ a smooth point.
If $D$ is a big divisor on $X$, the corresponding Newton--Okounkov body will be denoted by $\Delta_{(C,x)}(D)$.

\begin{Rem}
In fact one can say somewhat more about the convex geometry of Newton--Okounkov polygons, see \cite[Proposition 2.2]{KLM}. First, all the slopes of its edges are rational. Second, if one defines
\[
\mu_C(D) \deq \sup \st{t>0\mid D-tC \text{ is big}}\ ,
\]
then all the vertices of $\Delta_{(C,x)}(D)$ are rational with possibly two exceptions, i.e. the points of this convex set lying on the line $\{\mu_C(D)\}\times \R$.
\end{Rem}

Lazarsfeld and Musta\c t\u a observe in \cite[Theorem 6.4]{LazMus09} that variation of Zariski decomposition \cite[Theorem 1]{BKS}
provides a  recipe for computing  Newton--Okounkov bodies in the surface case. Let $D=P+N$ be the Zariski decomposition of $D$ (for the definition and basic properties see \cite{Bad,FU}), where  the notation, here and later, is the standard one: $P$ is the \emph{nef part} ${\rm Nef}(D)$ and $N$ the
\emph{negative part} ${\rm Neg}(D)$ of the decomposition. Denote by $\nu=\nu(D,C)$ the coefficient of $C$ in $N$ and $\mu=\mu_C(D)$ whenever there is no danger of confusion.

For any $t\in[\nu ,\mu]$, set  $D_t=D-tC$ and let $D_t=P_t+N_t$  be the Zariski
decomposition of $D_t$. Consider the functions $\alpha,\beta: [\nu,\mu]\to \mathbb R^+$ defined as follows
\[
\alpha(t)\deq \ord_x(N_{t\vert C}), \quad \beta(t)\deq\alpha(t)+P_t\cdot C \ .
\]

\begin{Teo} [Lazarsfeld--Musta\c{t}\u{a}, {\cite[Theorem 6.4]{LazMus09}}]\label{thm:LM}
In the above setting one has
\[
\Delta_{(C,x)}(D) \equ \left\{(t,u)\in \mathbb R^2 \vert\,\, \nu\leqslant t\leqslant \mu, \,\, \alpha(t)\leqslant u\leqslant\beta(t)\right\}.
\]
\end{Teo}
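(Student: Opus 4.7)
My approach is to read off the slices of $\Delta_{(C,x)}(D)$ directly from the flag valuation attached to $(C,x)$, which sends $f\in K(X)^*$ to $(\nu_1(f),\nu_2(f))=(\ord_C(f),\ord_x((f/s_C^{\nu_1(f)})|_C))$, where $s_C$ is a local equation of $C$ at $x$. By the density of valuative points recorded in Remark~\ref{rem:val}, it suffices, for each rational $t=a/k$ with $k$ sufficiently divisible, to identify the set of values $\nu_2(f)/k$ as $f$ runs over the nonzero sections of $kD$ with $\ord_C(f)=a$, and to check that the aggregate of these fills the claimed region densely.

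First I would establish the horizontal bounds $\nu\leqslant t\leqslant \mu$. Since $N$ is the negative part of the Zariski decomposition of $D$, every effective divisor numerically equivalent to $kD$ contains $kN$ as a subdivisor, so its $C$-coefficient is at least $k\nu$ and $\ord_C(f)/k\geqslant\nu$. The upper bound $\ord_C(f)/k\leqslant\mu$ comes from the definition of $\mu_C(D)$: for $t>\mu$ the class $D-tC$ is no longer big, so $H^0(k(D-tC))=0$ for $k\gg 0$, preventing $\ord_C(f)$ from exceeding $kt$.

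Next I fix a rational $t=a/k\in(\nu,\mu)$ and factor any $f\in H^0(kD)$ with $\ord_C(f)=a$ as $f=s_C^a\cdot f'$, with $f'\in H^0(kD-aC)$ not vanishing identically on $C$. Writing the Zariski decomposition $kD-aC=kP_t+kN_t$, the fixed-component property of the negative part forces $f'=h\cdot g$, where $h$ cuts out $kN_t$ and $g\in H^0(kP_t)$. For $t$ off a discrete set in $(\nu,\mu)$ the curve $C$ is not a component of $N_t$ (the support of $N_t$ is locally constant on the Zariski chambers, and $C$ drops out of it at $t=\nu$); for such $t$, restriction to $C$ gives
\[
\ord_x(f'|_C) \equ k\alpha(t)+\ord_x(g|_C),
\]
and the exceptional values of $t$ are recovered by continuity of the Zariski decomposition \cite{BKS}. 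The slice computation at $t$ therefore reduces to determining the rational values attained by $\ord_x(g|_C)/k$ as $g$ varies in $H^0(kP_t)$.

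The main technical obstacle is this last step, which is essentially a restricted-volume computation on $C$. Since $P_t$ is nef and big (as $D-tC$ is big with $(P_t)^2=\vol(D-tC)>0$) and $C$ is not a component of $N_t$, the restriction $kP_t|_C$ is a nef line bundle on $C$ of degree $k(\beta(t)-\alpha(t))$. A Kawamata--Viehweg vanishing argument on $kP_t-C$ shows that $H^0(X,kP_t)\to H^0(C,kP_t|_C)$ is surjective for $k\gg 0$, and on the smooth curve $C$ the orders of vanishing at the fixed point $x$ of sections of a line bundle of degree $d$ realize every integer in $[0,d]$. Consequently $\ord_x(g|_C)/k$ fills $[0,P_t\cdot C]$ densely as $k\to\infty$, so the slice at $t$ is exactly $[\alpha(t),\beta(t)]$. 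Varying $t$ through rationals in $[\nu,\mu]$ and invoking density of valuative points completes the proof.
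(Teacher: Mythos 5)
The paper does not prove this statement --- it is quoted verbatim from \cite[Theorem 6.4]{LazMus09} --- so your proposal can only be measured against the original argument. Your skeleton (horizontal bounds from the negative part and from $\mu_C(D)$; slices at $t$ obtained by splitting off $kN_t$ and measuring $\ord_x(\cdot|_C)$ on sections of $kP_t$) is the right one, but two steps do not hold up as written. The first concerns your claim that $C\not\subset{\rm Supp}(N_t)$ for $t$ off a discrete set: the justification is backwards, because by the monotonicity of $t\mapsto N_t$ recalled in Remark \ref{rem:disc} a curve that enters ${\rm Supp}(N_t)$ never leaves it, so ``$C$ drops out at $t=\nu$'' plus local constancy on Zariski chambers says nothing about larger $t$; if $C$ re-entered ${\rm Supp}(N_{t_0})$ for some $t_0<\mu$, your slice formula would fail on all of $[t_0,\mu)$, not on a discrete set. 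What is true (and is needed already to make sense of $\alpha(t)$) is that $C\not\subset{\rm Supp}(N_t)$ for every $t\in[\nu,\mu)$: if $N_t=cC+M$ with $c>0$, then $P_t\cdot C=0$ and $P_t+(c+\varepsilon)C+M$ is visibly the Zariski decomposition of $D-(t-\varepsilon)C$, so the coefficient of $C$ in $N_{t'}$ would be at least $c+(t-t')>0$ all the way down to $t'=\nu$, contradicting $N_\nu=N-\nu C$.

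The second, more serious, gap is the Kawamata--Viehweg step. Vanishing of $H^1(X,kP_t-C)$ requires $kP_t-C-K_X$ to be nef and big, and nefness fails whenever some curve $E$ with $P_t\cdot E=0$ (for instance a component of $N_t$) satisfies $(C+K_X)\cdot E>0$; since $P_t$ is in general only nef and big, not ample, the surjectivity of $H^0(X,kP_t)\to H^0(C,kP_t|_C)$ is not established, and this is precisely the ``filling'' direction of the theorem. (A smaller inaccuracy: on a curve of genus $g>0$ a line bundle of degree $d$ need only realize $h^0$ of the $d+1$ possible vanishing orders at $x$; up to $g$ values can be missed. This is harmless for density but false as you state it.) The standard repairs are either to invoke restricted volumes, $\vol_{X|C}(D-tC)=P_t\cdot C$ when $C\not\subset{\rm Null}(P_t)$ (the case $C\subset{\rm Null}(P_t)$ giving a degenerate slice), or --- as Lazarsfeld and Musta\c t\u a essentially do --- to prove only the inclusion $\Delta_{(C,x)}(D)\subseteq R$ into the claimed region, which your upper bounds already yield since $\ord_x(g|_C)\leqslant\deg(kP_t|_C)$, and then compare areas:
\[
{\rm area}(R)\equ\int_\nu^\mu P_t\cdot C\,dt\equ\tfrac12\vol(D)\equ\vol\left(\Delta_{(C,x)}(D)\right)
\]
by the differentiability of the volume function \cite{BKS} together with Theorem \ref{thm:LM2}, whence the two convex regions coincide. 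This global argument sidesteps any vanishing theorem and is the route you should take if you want a complete proof.
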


\begin{Rem}
Note that all the results concerning Newton--Okounkov bodies use Zariski decomposition in Fujita's sense, i.e. for pseudo-effective $\R$-divisors.
\end{Rem}

As an immediate consequence we have:

\begin{Cor} \label{cor:LM}
In the above setting the lengths of the vertical slices of $\Delta_{(C,x)}(D)$ are independent of the (smooth) point
$x\in C$.
\end{Cor}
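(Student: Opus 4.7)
The plan is to read the length of a vertical slice directly from the description of $\Delta_{(C,x)}(D)$ supplied by Theorem~\ref{thm:LM}. For each $t\in[\nu,\mu]$, the slice
\[
\Delta_{(C,x)}(D)\,\cap\,\bigl(\{t\}\times\R\bigr)
\]
is the closed interval $[\alpha(t),\beta(t)]$, whose length is
\[
\beta(t)-\alpha(t) \;=\; P_t\cdot C.
\]
First I would point out that the interval $[\nu,\mu]$ of $t$-values for which the slice is nonempty, as well as the Zariski decomposition $D_t=P_t+N_t$, are defined entirely in terms of the numerical data of $D$ and $C$; neither involves the point $x$. Hence the domain of $t$ and the nef part $P_t$ depend only on the flag curve $C$.

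The key observation is then that although the two endpoint functions $\alpha(t)=\ord_x(N_{t|C})$ and $\beta(t)=\alpha(t)+P_t\cdot C$ both depend on the choice of the smooth point $x\in C$ (the entire body $\Delta_{(C,x)}(D)$ shifts vertically as $x$ varies on $C$), the translation cancels in the difference. So the vertical slice at $t$ has length
\[
\beta(t)-\alpha(t)\,=\,P_t\cdot C,
\]
which is manifestly independent of $x$. This proves the corollary, with no serious obstacle to overcome; the only point that warrants care is confirming that the domain $[\nu,\mu]$ itself is independent of~$x$, which is immediate from the definitions of $\nu=\nu(D,C)$ and $\mu=\mu_C(D)$ as numerical invariants of the pair $(D,C)$.
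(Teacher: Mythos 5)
Your argument is correct and is exactly the paper's: the corollary is stated there as an immediate consequence of Theorem~\ref{thm:LM}, since the vertical slice at $t$ is $[\alpha(t),\beta(t)]$ and its length $\beta(t)-\alpha(t)=P_t\cdot C$ involves only the Zariski decomposition of $D-tC$ and the interval $[\nu,\mu]$, none of which depends on $x$. (Your parenthetical remark that the whole body merely shifts vertically as $x$ varies is not needed and not literally accurate in general --- $\alpha(t)=\ord_x(N_{t|C})$ can change non-uniformly in $t$ as $x$ moves --- but this does not affect the proof.)
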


\begin{Rem}\label{rem:disc} (See  \cite[proof of Proposition 2.2]{KLM})
In the above setting, the function $t \to N_t$  is \emph{nondecreasing} on $[\nu,\mu]$, i.e. $N_{t_2}-N_{t_1}$ is effective whenever $\nu\leqslant t_1\leqslant t_2\leqslant \mu$. This implies that a vertex $(t,u)$ of $\Delta_{(C,x)}(D)$ may only occur for those $t\in [\nu,\mu]$ where the ray $D-tC$ crosses into a different Zariski chamber, in particular, where a new curve appears in $N_t$.
\end{Rem}

Given three real numbers $a>0, b\geqslant 0, c>0$, we will denote by $ \Delta_{a,b,c}$ the triangle with vertices $(0,0)$, $(a,0)$ and $(b,c)$. We set $\Delta_{a,c}:=\Delta_{a,0,c}$ and $\Delta _{a,a}:=\Delta_a$.
Note that the triangle $\Delta_{a,b,c}$ degenerates into a segment
if $c=0$.

\begin{Exa}\label{ex:p} In the above setting suppose that $D$ is an ample divisor. Then, by Theorem \ref {thm:LM}, the \NO body $\Delta_{(C,x)}(D)$ contains the triangle $\Delta_{\mu_C(D),D\cdot C}$, and by Theorem \ref {thm:LM2} one has
\[
\mu_C(D)\leqslant\frac {D^ 2}{D\cdot C}\ .
\]
Equality holds if and only if $\Delta_{(C,x)}(D)=\Delta_{\mu_C(D),D\cdot C}$. In particular, if $X=\mathbb P^ 2$, $C$ is a curve of degree $d$, and $D$ a line, then $\Delta_{(C,x)}(D)=\Delta_{\frac 1d,d}$.
\end{Exa}

Let $D$ be a big divisor on $X$. Let ${\rm Null}(D)$ be the divisor (containing ${\rm Neg}(D)$) given by the union of all irreducible curves $E$ on $X$ such that ${\rm Nef}(D)\cdot E=0$.

\begin{Teo}[K\"uronya--Lozovanu, {\cite[Theorem 2.4, Remark 2.5]{KL}}] \label{thm:LK}
Let $X$ be a smooth projective surface, $D$ be a big divisor on $X$ and $x\in X$ a
point. Then: \begin{itemize}
\item [(i)] $x\not\in {\rm Neg}(D)$ if and only if for any admissible flag $(C,x)$ one has $(0,0)\in \Delta_{(C,x)}(D)$;
\item [(ii)] $x\not\in {\rm Null}(D)$ if and only if for any admissible flag $(C,x)$ there is a positive number $\lambda$ such that $\Delta_\lambda\subseteq  \Delta_{(C,x)}(D)$.
\end{itemize}
\end{Teo}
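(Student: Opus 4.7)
The plan is to prove both parts by combining the explicit description of $\Delta_{(C,x)}(D)$ from Theorem \ref{thm:LM} with the structural properties of the Zariski decomposition $D = P + N$. Throughout, I will retain the notation of Theorem \ref{thm:LM}: $\nu = \nu(D, C)$, and $\alpha(t), \beta(t)$ for the lower and upper boundaries of $\Delta_{(C,x)}(D)$ at height~$t$.

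For part (i), I will observe that $(0, 0) \in \Delta_{(C,x)}(D)$ is equivalent, via Theorem \ref{thm:LM}, to $\nu = 0$ and $\alpha(0) = \ord_x(N\vert_C) = 0$. For the forward implication, assuming $x \not\in {\rm Neg}(D)$, the curve $C$ in any admissible flag $(C, x)$ cannot be a component of $N$ (otherwise $x \in C \subseteq {\rm Neg}(D)$), forcing $\nu = 0$; moreover $x \not\in N$ implies $\alpha(0) = I_x(N, C) = 0$. For the converse, given $x \in {\rm Neg}(D)$, I would pick a smooth irreducible curve $C$ through $x$ which is not a component of $N$ (for instance, a smooth member of a sufficiently positive linear system with an assigned base point at $x$); then $\nu = 0$ while $\alpha(0) = I_x(N, C) > 0$ excludes $(0, 0)$ from $\Delta_{(C,x)}(D)$.

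For the forward direction of (ii), I fix an admissible flag $(C, x)$ with $x \not\in {\rm Null}(D)$. Part (i) already gives $\nu = 0$ and $\alpha(0) = 0$, and $C \not\subseteq {\rm Null}(D)$ (since $x \in C$), hence $\beta(0) = P \cdot C > 0$. It then suffices to exhibit $\lambda > 0$ such that $\alpha(t) = 0$ and $\beta(t) \geq t$ hold simultaneously on $[0, \lambda]$. The key input is that by Remark \ref{rem:disc} together with the Bauer--K\"uronya--Szemberg description of Zariski chambers (cf.~\cite{BKS}), any irreducible curve newly entering the support of $N_t$ as $t$ increases from $0$ must satisfy $P \cdot E = 0$ at the first chamber wall; such curves lie in ${\rm Null}(D)$ and, by hypothesis, avoid $x$. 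Hence $\alpha(t) \equiv 0$ throughout the first Zariski chamber $[0, t_1]$, while continuity of $\beta$ guarantees $\beta(t) \geq \beta(0)/2$ on a neighbourhood of $0$; choosing $\lambda > 0$ smaller than $t_1$ and $\beta(0)/2$ secures $\Delta_\lambda \subseteq \Delta_{(C,x)}(D)$.

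For the converse of (ii), suppose $x \in {\rm Null}(D)$. If $x \in {\rm Neg}(D)$, part~(i) already produces a flag with $(0, 0) \notin \Delta_{(C,x)}(D)$, which trivially excludes every $\Delta_\lambda$. Otherwise there is an irreducible null curve $E$ through $x$ with $E \not\subseteq N$. My idea is to take $(C, x)$ with $C$ a general smooth curve through $x$ transverse to $E$ (so $C \cdot E > 0$): the identity $(D - tC) \cdot E = N \cdot E - t \, C \cdot E$ forces $E$ into $N_t$ as soon as $t > t_0 := (N \cdot E)/(C \cdot E)$, and when $N \cdot E = 0$ this happens for every $t > 0$, so $x \in E \subseteq N_t$ yields $\alpha(t) > 0$ and ruins any candidate triangle $\Delta_\lambda$. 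The main obstacle is precisely the residual case $N \cdot E > 0$, where $E$ enters $N_t$ only strictly after $0$; here I would switch flags and use $C = \tilde E$ after passing to a birational model resolving any singularity of $E$ at $x$. Then $\beta(0) = P \cdot \tilde E = 0$ makes $\Delta_{(C,x)}(D)$ pinched at the origin, and a concavity-and-slope estimate on $\beta$ near $t = 0^+$ (using that new null curves $F$ with $E \cdot F > 0$ enter $N_t$ instantly) should rule out triangle containment. Ensuring that this slope analysis covers every null curve configuration is the delicate point of the proof.
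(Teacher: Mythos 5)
First, note that the paper does not prove Theorem \ref{thm:LK} at all: it is quoted verbatim from K\"uronya--Lozovanu \cite{KL}, so there is no internal proof to compare against. Reviewing your argument on its own terms: part (i) and the forward implication of (ii) are essentially correct (the continuity of Zariski decompositions from \cite{BKS} does show that every component of $N_t$ for small $t>0$ lies in ${\rm Null}(D)$, hence misses $x$, giving $\alpha\equiv 0$ near $0$; the inequality you want on the upper boundary is $\beta(t)\geqslant \lambda-t$, not $\beta(t)\geqslant t$, but your choice of $\lambda$ handles this).

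The genuine gap is exactly where you place it: the converse of (ii) when the null curve $E$ through $x$ satisfies $N\cdot E>0$. Your proposed fix does not work as stated: passing to a birational model resolving $E$ at $x$ changes the surface, and the theorem quantifies over admissible flags $(C,x)$ on $X$ itself; moreover the ``concavity-and-slope estimate'' is never carried out. The case split on $N\cdot E$ is in fact unnecessary. For any admissible flag $(C,x)$ with $C\neq E$ one has $C\cdot E\geqslant I_x(C,E)\geqslant 1$, and $\nu=0$, $N_0=N$. If $E$ were not a component of $N_t$ for some $t\in(0,\mu)$, then since $N_t-N$ is effective and does not contain $E$ (Remark \ref{rem:disc}), one gets
\[
P_t\cdot E \;=\; P\cdot E - tC\cdot E - (N_t-N)\cdot E \;\leqslant\; -\,tC\cdot E \;<\;0,
\]
contradicting nefness of $P_t$. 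Hence $E\subseteq {\rm Supp}(N_t)$ for every $t\in(0,\mu)$, so $\alpha(t)\geqslant (\text{coefficient of }E\text{ in }N_t)\cdot I_x(E,C)>0$, which excludes every point $(t,0)$ with $t>0$ and therefore every $\Delta_\lambda$. (If instead $C=E$, which requires $E$ smooth at $x$, the conclusion is immediate from $\alpha(0)=\beta(0)=P\cdot E=0$, so the slice over $t=0$ is the single point $(0,0)$.) Replacing your residual-case discussion by this monotonicity-plus-nefness argument closes the proof.
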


\begin{Rem}\label {rem:LK} The divisor $D$ is nef (resp. ample) if and only if ${\rm Neg}(D)=\emptyset$ 
(resp. ${\rm Null}(D)=\emptyset$), so that Theorem \ref {thm:LK} provides nefness and ampleness criteria for $D$ detected from \NO bodies.

Note that Theorem \ref {thm:LK} has a version in higher dimension (see \cite {KL2}). The same papers \cite {KL,KL2} explain how to read the \emph{moving Seshardi constant}  of $D$ at a point $x\not\in {\rm Neg}(D)$ from \NO bodies.
\end{Rem}

\section{Valuations in dimension 2}\label{sec:valdim2}

\subsection{Quasimonomial valuations}\label{ssec:qmv}

We will mainly treat the  case $X=\mathbb P^2$ and $D$ a line, leaving to the reader to make the obvious adaptations for other surfaces.

Let $O$ denote the origin $(0,0)\in\A^2=\Spec (K[x,y]) \subset \P^2=\Proj (K[X,Y,Z])$ with $x=X/Z$, $y=Y/Z$, and let $\mathbb K=K(\P^2)=K(x,y)$
be the field of rational functions in two variables. We will focus on \NO bodies of  $D$ with respect
to rank 2 valuations $v=(v_1,v_2)$ with centre at $O$, with the additional condition
that either the home of $v$ is a smooth curve through $O$,
or it is equal to $O$ (in which case we call the
corresponding body an \emph{infinitesimal} \NO body) and $v_1$
is a  \emph{quasimonomial} valuation.

Fix a smooth germ of curve $C$   through $O$;  we can assume without
loss of generality that $C$ is tangent to the line $y=0$; hence  $C$ can be locally
parameterized by $x \mapsto (x,\xi(x))\in \A^2$, where $\xi(x)\in K[[x]]$  with $\xi(0)=\xi'(0)=0$.

\begin{Def}\label{seriesdef}
Given a real number $s\geqslant 1$ and any $f \in \mathbb K^*$, set
\begin{equation}\label{eq:c-exp1}
\vv[f]{C}{s} := \ord_x(f(x,\xi(x)+\theta x^s))\ ,
\end{equation}
where $\theta$ is transcendental over $\mathbb C$. Equivalently, expand $f$ as a Laurent series
\begin{equation}\label{eq:c-exp}
 f(x,y) = \sum a_{ij}x^i (y-\xi(x))^j \ .
\end{equation}
One has
\begin{equation}\label{monomialdef}
 \vv[f]{C}{s} = \min\{i+sj|a_{ij}\ne 0\}\ .
\end{equation}
Then $f\mapsto \vv[f]{C}{s}$ is a rank $1$ valuation which we denote by $\vv{C}{s}$.
Such valuations are called \emph{monomial} if $C$ is the line $y=0$
(i.e., $\xi=0$), and \emph{quasimonomial} in general. The point $O$ is the centre of the valuation.

We call \eqref{eq:c-exp} the \emph{$C$-expansion} of $f$. Slightly abusing language, $s$ will be called the
\emph{characteristic exponent} of $\vv{\xi}{s}$ (even if it is an integer).
\end{Def}

\begin {Exa} \label{ex:mult} The valuation $v_O:=\vv{C}{1}$ is the
 \emph{$O$-adic valuation} or \emph{multiplicity valuation}:  if $f$ is a non--zero polynomial, then $v_O(f)$ is the \emph{multiplicity} $\mult_O(f)$
of $f=0$ at $O$.
\end{Exa}

\begin{Rem}\label{rem:qm}
 The value group of $\vv{C}{s}$ is:
 \begin{itemize}
 \item $\Z \frac{1}{q}\subset \Q$
 if $s$ is a rational number $s=\frac pq$ with $\gcd(p,q)=1$;
 \item $\Z+\Z s \subset \R$ if $s$ is an irrational number.
 \end{itemize} So the rank of $\vv{C}{s}$ is 1, but
 in the latter case
 the valuation has \emph{rational rank 2}. We will be mostly concerned with
 the rational case. Note that $\vv{C}{s}$ is discrete if and only if $s$ is rational.
\end{Rem}

\begin{Rem}\label {rem:ocs}
The valuation $\vv{C}{s}$ depends only on the
$\lfloor s \rfloor$-th jet of $C$, so for fixed $s$ the series $\xi$ can be  assumed
to be a polynomial; however, later on we shall let $s$ vary for a fixed $C$, so we better keep $\xi(x)$ a series.
\end{Rem}

\begin{Exa}\label{ex:C}
If $f=0$ is the equation of $C$ (supposed to be algebraic, which, for fixed $s$ is no restriction by Remark \ref {rem:ocs}), then by plugging $y=\xi(x)$ in \eqref {eq:c-exp} we have $\sum_{i=1}^\infty a_{i0}x^ i\equiv 0$, hence $a_{i0}=0$, for all $i\geqslant 0$. Then
$f(x,y)=(y-\xi(x))\cdot g(x,y)$ where $g(0,0)\neq 0$. This implies that $v_1(C,s;f)=s$, which can be also deduced from \eqref {eq:c-exp1} by expanding $f(x,\xi(x)+\theta x^s)$ in Taylor series with initial point $(x,\xi(x))$.
\end{Exa}

\begin{Rem}\label{tree} (See \cite {FJ04}) The set $\mathcal {QM}$ of all quasi-monomial valuations
with centre at $O$  has a  natural topology, namely
the coarsest topology such that for all $f\in \mathbb K^*$,
$v\mapsto v(f)$ is a continuous map $\mathcal {QM} \rightarrow \R$.
This  is called the \emph{weak topology}. For a fixed
$C$, the map $s\mapsto \vv{C}{s}$ is  continuous in $[1,+\infty)$.

There is however a   finer topology of interest on the valuative tree $\mathcal {QM}$:  the finest
topology such that $s\mapsto \vv{C}{s}$ is continuous in $[1,+\infty)$ for all $C$.
This latter  is called the \emph{strong topology}. With the strong topology,
$\mathcal {QM}$ is a profinite $\R$-tree, rooted at the \emph{$O$-adic valuation} (see \cite{FJ04} for details).
To avoid confusion with branches of curves, we will call  the branches in $\mathcal {QM}$ \emph{arcs}.
Maximal arcs of the valuative tree are homeomorphic to the interval  $[1,\infty)$, parameterized by $s\mapsto \vv{C}{s}$ where $C$
is a smooth branch of curve at $O$.

The arcs  of $\mathcal {QM}$ share the  segments given by
coincident jets, and separate at integer values of $s$;
these correspond to divisorial valuations on an appropriate birational model.

Though we will not use this fact, note that  $\mathcal {QM}$ is a sub-tree of a larger $\R$-tree $\mathcal V$ with the same root, called  the
\emph{valuation tree}, which consists of all real valuations of $\mathbb  K$ with centre $O$. Ramification on $\mathcal V$ occurs at all \emph{rational}
points of the arcs, rather than only at integer points, because of
valuations corresponding to singular branches. The tree $\mathcal {QM}$
is obtained from $\mathcal V$ by removing
the arcs corresponding to singular branches and all
ends (see \cite[Chapter 4]{FJ04} for details).
\end{Rem}

\subsection{Quasimonomial valuations and the Newton--Puiseux algorithm}\label{ssec:NP}

We recall briefly the \emph {Newton--Puiseux algorithm} (see \cite[Chapter 1]{Cas00} for a full discussion).

Given $f(x,y)\in K(x,y)-\{0\}$ (we may in fact assume that $f$ belongs to  $K[[x,y]]$),  and a curve $C$ as in \S\ref {ssec:qmv}, we want to investigate
the behavior of the function
\[
v_1(C;f)\colon  s\in [1,+\infty) \mapsto v_1(C,s;f)\in \R\ .
\]
% For simplicity we  will assume that $f(0,0)=0$, that the line $x=0$ is not tangent to the curve $V$ with equation $f(x,y)=0$ (in particular, $f$ is not 
% divisible by $x$), and that $V$ does not contain $C$ (i.e. $f$ is not divisible by $y-\xi(x)$ in $K[[x,y]]$). 
Returning to \eqref {eq:c-exp}, consider the convex hull ${\overline \NP}(C,f)$ in $\R^ 2$ (with $(u,v)$ coordinates) of all points $(i,j)+{\bf v} \in \R^ 2$
such that $a_{ij}\neq 0$, and ${\bf v}\in \R_+^ 2$. The boundary  of ${\overline \NP}(C,f)$ consists of two half-lines
parallel to the $u$ and $v$ axes, respectively, along with  a polygon $\NP(C,f)$, named the  \emph{Newton polygon} of $f$ with respect to $C$.

We will denote by $\mathfrak V(C,f)$ (resp. by $\mathfrak E(C,f)$) the set of vertices (resp. of edges) of $\NP(C,f)$,  ordered from left to right, i.e.,
\[
\mathfrak V(C,f) \equ (\mathfrak v_0,\ldots, \mathfrak v_h)\ ,\ \quad \mathfrak E(C,f)\equ (\mathfrak l_1,\ldots, \mathfrak l_h)\ ,
\]
where $\mathfrak l_k$ is the segment joining $\mathfrak v_{k-1}$ and $\mathfrak v_k$, for $k=1,\ldots,h$, and $\mathfrak v_k=(i_k,j_k)$.

We will denote by $V$ the germ of the curve $f=0$. Then,
\[
\mult_O (V) \equ \textup{min}_{k}\{ i_k+j_k\}, \ \ord_C (V) \equ j_h,\  ((V-j_hC),C)_O \equ i_h\ ,
\]
where $((V-j_hC),C)_O$ is the \emph{local intersection number} of two effective cycles with distinct support $V-j_h C$ and $C$  at the origin $O$.

The numbers $n:=w(C,f):=i_h-i_0$ and $m:=h(C,f):=j_0-j_h$
are usually called the \emph{width} and the \emph{height} of $\NP(C,f)$. Analogously, one defines the \emph{width}
$n_k:=w(\mathfrak l_k)$ and the \emph{height} $m_k:=h(\mathfrak l_k)$ for any edge $\mathfrak l_k$ in the obvious way, so that $\mathfrak l_k$ has
\emph{slope} $s_k:=\slo(\mathfrak l_k)=- \frac {m_k}{n_k}$, for $k=1,\ldots, h$.

Let $\mathcal B(V)$ be the set of \emph{branches} of $V$. Then the Newton--Puiseux algorithm as presented in
\cite[\S1.3]{Cas00}
(with suitable modifications due to the fact that \eqref {eq:c-exp} is not the standard expansion of $f(x,y)$ as a power series in $x$ and $y$) yields  a surjective map
\[
\varphi_V: \mathcal B(V)\to \mathfrak E(C,f)
\]
such that whenever $\gamma$ is a branch of $V$ whose Puiseux expansion
with respect to $C$ starts as
\[
y-\xi(t)=ax^t+\ldots, \textup{ with } t\in \Q \textup{ and }t\geqslant 1
\]
(i.e., $\gamma$ is not tangent to the $x=0$ axis nor contained
in $C$)
then the edge $\mathfrak l=\varphi_V(\gamma)$ has slope  $\slo(\mathfrak l)=- \frac 1 t\geqslant -1$.
Moreover
$$t=\frac {(\gamma,C)_O}{\mult_O(\gamma)}\ ,$$
and if $\gamma\in \mathcal B(V)$ is the unique branch with $\slo(\varphi_V(\gamma))=-\frac 1 t$, then in fact $h(\varphi_V(\gamma))=\mult_O(\gamma)$ is the \emph{multiplicity} of $\gamma$ (at $O$),
whereas $w(\varphi_V(\gamma))=(\gamma,C)_O$ is the \emph{local intersection multiplicity} of $\gamma$ with $C$ at $O$.

Consider now the line $\ell_s$ with equation $u+sv=0$ and slope $-\frac 1s$. By \eqref {monomialdef}, the valuation  $v_1(C,s;f)$ is computed by those vertices in $\mathfrak V(C,f)$ with the smallest distance to~$\ell_s$, i.e. for any such vertex $\mathfrak v=(i,j)$, one has
$v_1(C,s;f) \equ i+sj$. Note that there will be only one such point, unless  $\ell_s$ is parallel to one of the edges $\mathfrak l\in \mathfrak E(C,f)$
(hence $s$ is rational), in which case there will be  two: the vertices of $\mathfrak l$, whose slope $\slo(\mathfrak l)=-\frac 1s$.

From the above discussion its not hard to deduce the following statement:

\begin{Pro}\label{lem:trop}
 For any curve $C$ smooth at $O$ and $f\in K(x,y)-\{0\}$ \emph{regular} at~$O$ (i.e., $f$ is defined at $O$) one has:\\
 \begin{inparaenum}
 \item [(i)] $v_1(C,\cdot\,;f):\R\rightarrow\R$ is continuous in $[1,+\infty)$, piecewise linear, non-decreasing,  concave and its graph consists of  finitely many (one more than the number of edges in $\mathfrak E(C,f)$ with slope $t>-1$) linear arcs with rational slopes (i.e., $v_1(C,\cdot\,;f)$ is a \emph{tropical polynomial} in $s$); \\
 \item [(ii)] the points where  the derivative of $v_1(C,\cdot\,;f)$ is not defined are
$$s_k=-\frac 1 {\slo(\mathfrak l_k)}, \quad \text{for} \quad k=1,\ldots, h;$$\\
 \item [(iii)] if the curve $V$ with equation $f=0$ does not contain $C$, then
 \begin {equation}\label{eq:const}
v_1(C,s;f)= {(V,C)_O}  \quad \text{for} \quad s\gg 1.
\end{equation}
\end{inparaenum}\end{Pro}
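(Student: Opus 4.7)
The starting point is the formula \eqref{monomialdef}, which expresses $v_1(C,s;f)$ as
\[
 v_1(C,s;f) \equ \min\bigl\{\, i+sj \,\big|\, a_{ij}\neq 0 \,\bigr\}\,,
\]
so that for fixed $f$, the map $s\mapsto v_1(C,s;f)$ is the pointwise infimum of the affine functions $s\mapsto i+sj$ indexed by the support of the $C$-expansion of $f$. Since each such affine function has non-negative slope $j\geqslant 0$ (all $(i,j)$ lie in $\R_+^2$), and since $f$ is regular at $O$ so the support has at most one point on each horizontal line $\{j=\text{const}\}$ contributing to the minimum (namely the leftmost one, i.e.\ a vertex of $\overline\NP(C,f)$), the infimum is actually a minimum over the \emph{vertex} set $\mathfrak V(C,f)$. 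As the minimum of finitely many affine functions of $s$ with non-negative rational slopes, $v_1(C,\cdot\,;f)$ is automatically continuous, non-decreasing, concave, piecewise linear on $\R$, with rational slopes. This settles most of (i).

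For (ii), I would identify the breakpoints geometrically. The value $i+sj$ is, up to a positive constant, the signed distance from $(i,j)$ to the line $\ell_s:u+sv=0$; hence at a given $s$ the minimizing vertex is the one closest to $\ell_s$ from the side $u+sv>0$. As $s$ grows from $1$ to $\infty$, the slope $-1/s$ of $\ell_s$ increases from $-1$ to $0$, and the supporting vertex jumps from $\mathfrak v_{k-1}$ to $\mathfrak v_k$ precisely when $\ell_s$ becomes parallel to the edge $\mathfrak l_k$, i.e.\ when $s=-1/\slo(\mathfrak l_k)=:s_k$. Thus the points where the derivative of $v_1(C,\cdot\,;f)$ fails to exist are exactly those $s_k$ that lie in $[1,+\infty)$, which is equivalent to $\slo(\mathfrak l_k)\in(-1,0)$, i.e.\ $\slo(\mathfrak l_k)>-1$; this also establishes the count of linear arcs asserted in (i), since the number of arcs exceeds the number of relevant breakpoints by one.

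For (iii), assume $V\not\supseteq C$. As noted in Example \ref{ex:C}, the fact that $C\subseteq V$ is equivalent to $a_{i0}=0$ for all $i$; hence the hypothesis implies there exists $i$ with $a_{i0}\neq 0$, so $\NP(C,f)$ has a vertex on the $u$-axis, forcing $j_h=0$ and $\mathfrak v_h=(i_h,0)$. From the description $i_h=((V-j_hC)\cdot C)_O=(V\cdot C)_O$ recalled just before the statement, we obtain $\mathfrak v_h=((V\cdot C)_O,0)$. For $s$ large enough (in particular $s>s_h$ if $h\geqslant 1$), the minimum of $i+sj$ is achieved at $\mathfrak v_h$, so
\[
 v_1(C,s;f) \equ i_h+s\cdot 0 \equ (V\cdot C)_O\,,
\]
proving \eqref{eq:const}.

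The only delicate point, and thus the main technical hurdle, is bookkeeping in step (ii): one must verify that each of the $h$ edges really does produce a distinct corner of the graph of $v_1(C,\cdot\,;f)$, so that the count of linear arcs is exactly one more than the number of edges with slope in $(-1,0)$, and not fewer because of some coincidence. This follows from the strict convexity of $\NP(C,f)$ (the slopes $\slo(\mathfrak l_k)$ are pairwise distinct by definition of vertex), which forces all the $s_k$ to be pairwise distinct.
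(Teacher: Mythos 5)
Your proof is correct and takes essentially the same route as the paper, which deduces the proposition from the preceding Newton--Puiseux discussion: $v_1(C,s;f)$ is the minimum of the finitely many affine functions $i+sj$ attached to the vertices of $\NP(C,f)$, so it is concave, piecewise linear and non-decreasing, with breakpoints exactly where $\ell_s$ becomes parallel to an edge, and the constancy in (iii) comes from the vertex $\mathfrak v_h=(i_h,0)=((V,C)_O,0)$ on the $u$-axis. The only cosmetic slip is calling the leftmost support point on each horizontal line a vertex of ${\overline \NP}(C,f)$ (it is merely a boundary point of it), which does not affect the argument since the minimum of $i+sj$ over the support is indeed attained on $\mathfrak V(C,f)$.
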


%%%
\begin{comment}
\begin{proof} It immediately follows from Remark \ref {rem:NP}, in particular for what concerns the last statement. However, we give  a more explicit argument.

If $g\in \C(x,y)$ is regular and non-zero at $O$ then $v_1(C,s;g)=0$. Thus after multiplying by a suitable function we may assume that $f\in \C[x,y]$. Let $(x,\xi(x))$ be a local parametrization of $C$ at $O$. We have
$$f(x,y)=\sum_{j=1}^ky^j\sum_{i=1}^ka_{ij}x^i=\sum_{j=1}^k(y-\xi(x))^j\sum_{i=1}^\infty b_{ij}x^i$$
 for some $k$, $a_{ij}$ and $b_{ij}$. Let $i(j):=\inf\{i|b_{ij}\ne 0\}$. We have $v_1(C,s;f)=\min\{i(j)+sj\}$, so we have the following expression of  $v_1(C,s;f)$ as a tropical polynomial
 $$v_1(C,s;f)=\sum_{j\in\{j|i(j)\ne\infty\}}i(j)s^j.$$
 \end{proof}
 \end{comment}
 %%%

\begin{Exa} \label{exa:Newton}
Let $C$ be the conic $x^2-2y=0$, so that $\xi(x)=x^2/2$, and let
$$f=(x^2+y^2)^3-4x^2y^2.$$
The $C$-expansion of $f$ is then
\begin{multline*}
 f=\left(y-\xi\left(x\right)\right)^6+3\,x^2\,\left(y-\xi\left(x
 \right)\right)^5+{{15\,x^4\,\left(y-\xi\left(x\right)\right)^4
 }\over{4}}+3\,x^2\,\left(y-\xi\left(x\right)\right)^4\\
 +{{5\,x^6\,\left(y-\xi\left(x\right)\right)^3}\over{2}}+6\,x^4\,\left(y-\xi
 \left(x\right)\right)^3+{{15\,x^8\,\left(y-\xi\left(x\right)\right)^
 2}\over{16}}+{{9\,x^6\,\left(y-\xi\left(x\right)\right)^2}\over{2}}\\
 +3\,x^4\,\left(y-\xi\left(x\right)\right)^2-4\,x^2\,\left(y-\xi\left(
 x\right)\right)^2+{{3\,x^{10}\,\left(y-\xi\left(x\right)\right)
 }\over{16}}+{{3\,x^8\,\left(y-\xi\left(x\right)\right)}\over{2}}\\
 +3\,x^6\,\left(y-\xi\left(x\right)\right)-4\,x^4\,\left(y-\xi\left(x
 \right)\right)+{{x^{12}}\over{64}}+{{3\,x^{10}}\over{16}}+{{3\,x^8
 }\over{4}}
\end{multline*}
%\[f(x,y)=3(y-\xi(x))^4+x(y-\xi(x))^3-x^2(y-\xi(x))^3-2x^3(y-\xi(x))^2+x^5(y-\xi(x)).\]
The Newton polygon of $f$ with respect to $C$ is depicted in Figure \ref{fig:NP}.
It has three sides and four vertices, corresponding to the ``monomials''
$(y-\xi(x))^6$, $x^2(y-\xi(x))^2$, $x^4(y-\xi(x))$ and $x^8$.
The curve $V:f(x,y)=0$ has four branches through $O$, all smooth;
two of them are transverse
to $C$ and map to the first side of the Newton polygon; one of them is tangent
to $C$ with intersection multiplicity 2, and maps to the second side; the last one
is tangent to $C$ and has intersection multiplicity 4 with it, and maps to the
third side.
\begin{figure}
\centering
%\begin{tikzpicture}
%    \tikzstyle{monomial}=[draw,circle,fill=black,minimum size=2.5pt,
%                            inner sep=0pt]
%  \fill[gray!50!white] (0,2.3) -- (0,2)  -- (.5,1.5) -- (1.5,1) -- (2.5,.5) -- (3.3,.5) -- (3.3,2.3); 
%  \draw[thick] (0,2.3) -- (0,2) node [monomial,label=left:{${\frak v}_0$}] {}  --
%  (.5,1.5)  node [monomial,label=225:{${\frak v}_1$}] {}  -- (1.5,1) node [monomial]{} -- 
%  (2.5,.5)  node [monomial,label=below:{${\frak v}_2$}] {}  -- (3.3,.5) ; 
%  \draw (1,1.5) node [monomial] {};
%  \draw[step=.5,help lines] (0,0) grid (3.3,2.3);
%  \draw (2.25,1.75) node {${\overline \NP}(C,f)$};
%\end{tikzpicture}
\begin{tikzpicture}[scale=.8]
    \tikzstyle{monomial}=[draw,circle,fill=black,minimum size=2.5pt,
                            inner sep=0pt]
  \fill[gray!50!white] (0,3.3) -- (0,3)  --
  (1,1)  --   (2,0.5) --  (4,0)   -- (6.3,0) -- (6.3,3.3);
  \draw[step=.5,help lines] (0,0) grid (6.3,3.3);
  \draw[thick] (0,3.3) -- (0,3) node [monomial,label=left:{${\frak v}_0$}] {}  --
  (1,1)  node [monomial,label=225:{${\frak v}_1$}] {}  --
  (2,0.5) node [monomial,label=240:{${\frak v}_2$}]{} --
  (4,0)  node [monomial,label=below:{${\frak v}_3$}] {}  -- (6.3,0) ;
  \draw (1,2.5) node [monomial] {};
  \draw (1,2) node [monomial] {};
  \draw (2,2) node [monomial] {};
  \draw (2,1.5) node [monomial] {};
  \draw (2,1) node [monomial] {};
  \draw (3,.5) node [monomial] {};
  \draw (3,1.5) node [monomial] {};
  \draw (3,1) node [monomial] {};
  \draw (4,1) node [monomial] {};
  \draw (4,.5) node [monomial] {};
  \draw (5,.5) node [monomial] {};
  \draw (5,0) node [monomial] {};
  \draw (6,0) node [monomial] {};
  \draw (4.25,2.25) node {${\overline \NP}(C,f)$};
\end{tikzpicture} \phantom{sp}
\begin{tikzpicture}[scale=0.8, auto]
 \draw[step=.5,help lines] (0,0) grid (4.8,4.3);
  \draw[thick] (0.5,2) to node [swap,fill=white] {\small{$2+2s$}} (1,3)
  to node [swap,fill=white] {\small{$4+s$}}  (2,4)
  to node [swap,fill=white] {\small 8} (4.8,4) ;
  \draw (4.8,0)--(0,0)--(0,4.3);
%\draw (3,2) node[fill=white] {\tiny{$ 
%\begin{cases}
% 2s+2 & \text{if }1\leqslant s \leqslant 2  \\
% s+4 & \text{if }2 \leqslant s \leqslant 4 \\
% 8 & \text{if }s \geqslant 4
%\end{cases}$}};
%
\end{tikzpicture}
\caption{\label{fig:NP}
\small{The Newton polygon of
Example \ref{exa:Newton}. Each dot represents a ``monomial'' in the
$C$-expansion of $f$; only four of them create vertices of the polygon.
At the right-hand side, the corresponding function $v_1(C,s,;f)$ for $s\geqslant 1$.
The three linear pieces correspond to the vertices $\frak v_1=(2,2)$,
$\frak v_2=(4,1), \frak v_3=(8,0)$,
as described in proposition \ref{lem:trop}.}}
\end{figure}
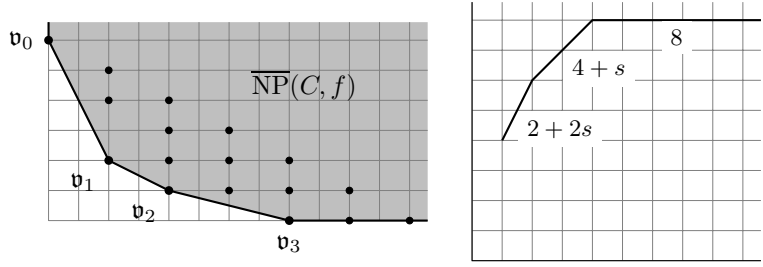
\end{Exa}

 \subsection{Quasimonomial valuations and the associated rank 2 valuations}\label{ssec:related}

We keep the above notation. As we saw in \S\ref {ssec:NP}, we have a finite sequence
$$s_0:=1<s_1<\ldots s_h<s_{h+1}:=+\infty$$ such that $v_1(C,s;f)$ is linear (hence differentiable) in each of the intervals $(s_k,s_{k+1})$,
for $k=0,\ldots, h$. The derivative in these intervals is constant and integral. At $s_k$, with $k=0,\ldots, h+1$, there are  the right and left derivatives of $v_1(C,s;f)$ (at $s_0=1$ (resp. at $s_{h+1}=+\infty$) there is only the right (resp. left) derivative). So we have:

\begin{Cor}
 For any curve $C$ smooth at $O$ and $f\in K(x,y)-\{0\}$ regular at $O$, the function
 $v_1(C;f)$ has everywhere in $(1,+\infty)$ (resp. in $[1,+\infty)$) left
 (resp.  right) derivative. We will denote them by $\partial_- v_1(C;f)$ (resp. $\partial_+ v_1(C;f)$).
\end{Cor}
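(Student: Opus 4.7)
The plan is to deduce the corollary directly from Proposition~\ref{lem:trop}(i)--(ii), which already provides a very explicit structural description of $v_1(C;f)$ as a function of $s$. The only work remaining is to extract one-sided differentiability from this piecewise linear structure.

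First, I would recall the setup: Proposition~\ref{lem:trop} gives a finite increasing sequence
\[
1\equ s_0 \leqslant s_1 < s_2 < \dots < s_h < s_{h+1} \deq +\infty
\]
of ``break points'' (the values $s_k=-1/\slo(\mathfrak l_k)$, together with the endpoint $s=1$), such that on each open interval $(s_k,s_{k+1})$ the function $v_1(C,\cdot\,;f)$ is affine with a rational slope $\sigma_k\in\Q$. By part (iii) of the proposition, beyond $s_h$ the function is even eventually constant (equal to $(V,C)_O$), so $\sigma_h=0$.

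Next, I would argue the one-sided differentiability on a point-by-point basis. For any $s^*\in(s_k,s_{k+1})$ the function is linear in a neighbourhood of $s^*$, so it is differentiable with
\[
\partial_- v_1(C;f)(s^*) \equ \partial_+ v_1(C;f)(s^*) \equ \sigma_k.
\]
At a break point $s^*=s_k$ with $1\leqslant k\leqslant h$, the continuity of $v_1(C,\cdot\,;f)$ asserted in (i) together with affineness on each side gives
\[
\partial_- v_1(C;f)(s_k) \equ \sigma_{k-1}, \qquad \partial_+ v_1(C;f)(s_k) \equ \sigma_k,
\]
directly from the definition of one-sided derivatives as limits of difference quotients restricted to the respective affine piece. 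Finally, at the left endpoint $s^*=1$ only the right derivative is defined, and it equals $\sigma_0$; if $+\infty$ were allowed, the left derivative at $+\infty$ would be $\sigma_h=0$.

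There is no serious obstacle here: once Proposition~\ref{lem:trop} is in hand, the statement is a standard fact about continuous piecewise linear functions with finitely many linear pieces. The only subtlety worth flagging is the convention at the boundary point $s=1$, where only $\partial_+$ makes sense, and the observation that concavity (also from (i)) guarantees $\sigma_{k-1}\geqslant\sigma_k$, so the left and right derivatives satisfy $\partial_- v_1(C;f)\geqslant \partial_+ v_1(C;f)$ everywhere on $(1,+\infty)$, which will be convenient in subsequent applications.
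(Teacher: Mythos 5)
Your proposal is correct and is essentially identical to the paper's own argument: the authors likewise read off from Proposition~\ref{lem:trop} the finite break-point sequence $s_0=1<s_1<\dots<s_h<s_{h+1}=+\infty$ on whose open subintervals $v_1(C,\cdot\,;f)$ is affine, and conclude the existence of one-sided derivatives everywhere (with only $\partial_+$ at $s=1$). Your extra observation that concavity forces $\partial_- v_1(C;f)\geqslant \partial_+ v_1(C;f)$ is a harmless and indeed useful addition.
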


\begin{Pro}\label{deriv_val}
 For any curve $C$ smooth at $O$, every $s\in \Q$, $s> 1$ and every $f\in K(x,y)-\{0\}$ set
 \[
v_- (C,s)(f)\ := \ (v_1(C,s;f),-\partial_- v_1(C;f)(s))
\]
\[
v_+ (C,s)(f)\ :=\ (v_1(C,s;f), \partial_+ v_1(C;f)(s)).
 \]
This defines two rank 2 valuations  $v_- (C,s)$ and
 $v_+ (C,s)$ with home at $O$. For $s=1$, the valuation $v_+ (C,s)$
defined as above is also a rank 2 valuation with home at $O$.
\end{Pro}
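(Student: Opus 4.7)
The plan is to reinterpret both candidate valuations as lexicographic minima on the support of the $C$-expansion of $f$, and then verify the valuation axioms directly from that description. Writing $f=\sum a_{ij}x^i(y-\xi(x))^j$, the first step is to establish the identities
\begin{equation*}
v_+(C,s)(f) \ =\ \min\nolimits_{\lex}\{(i+sj,\,j)\,:\,a_{ij}\neq 0\}\,,\qquad
v_-(C,s)(f) \ =\ \min\nolimits_{\lex}\{(i+sj,\,-j)\,:\,a_{ij}\neq 0\}\,,
\end{equation*}
with $\R^2$ carrying the standard lexicographic order. These identities are forced by Proposition~\ref{lem:trop} together with the Newton-polygon computation of the one-sided derivatives of $v_1(C;f)$: away from the slope set $\{s_1,\ldots,s_h\}$ the lex-minimum is attained at the unique vertex $(i,j)$ realising $\min(i+sj)$, and its $j$-coordinate equals the common value of both one-sided derivatives; at $s=s_k$ the endpoints $\mathfrak v_{k-1}$ and $\mathfrak v_k$ of the edge $\mathfrak l_k$ tie in $i+sj$, and the right (resp.\ left) derivative selects $j_k$ (resp.\ $j_{k-1}$), which is precisely the tiebreaker implicit in the lex-minimum for $v_+$ (resp.\ $v_-$).

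The core step is multiplicativity. The weight maps $w_\pm(i,j):=(i+sj,\pm j)\colon \N^2\to\R^2$ are additive and injective --- from $w_\pm(i,j)=w_\pm(i',j')$ one gets $j=j'$ and then $i=i'$ --- so the lex-minimum defining $v_\pm(f)$ (respectively $v_\pm(g)$) is attained at a unique pair $(i_0,j_0)$ (respectively $(i_0',j_0')$) in the support. The $C$-expansion of $fg$ has coefficients $c_{IJ}=\sum_{i+k=I,\,j+l=J}a_{ij}b_{kl}$, and since the lexicographic order on $\R^2$ is compatible with coordinatewise addition, any nonzero contribution to $c_{i_0+i_0',\,j_0+j_0'}$ is forced to come from $(i,j)=(i_0,j_0)$ and $(k,l)=(i_0',j_0')$. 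Hence $c_{i_0+i_0',\,j_0+j_0'}=a_{i_0j_0}b_{i_0'j_0'}\neq 0$, while every other nonzero $c_{IJ}$ satisfies $w_\pm(I,J)>_{\lex}w_\pm(i_0+i_0',\,j_0+j_0')$; this yields $v_\pm(fg)=v_\pm(f)+v_\pm(g)$.

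The remaining axioms are quick. The ultrametric inequality $v_\pm(f+g)\geqslant\min(v_\pm(f),v_\pm(g))$ follows from $\mathrm{supp}(f+g)\subseteq \mathrm{supp}(f)\cup\mathrm{supp}(g)$, and $v_\pm(a)=(0,0)$ for $a\in K^*$ is immediate from the $C$-expansion being supported at $(0,0)$. Multiplicativity extends $v_\pm$ from $K[x,y]$ to $K(x,y)^*$ via $v_\pm(1/f)=-v_\pm(f)$. For the rank and value group, writing $s=p/q$ in lowest terms, direct computation gives $v_\pm(x)=(1,0)$ and $v_\pm(y-\xi(x))=(s,\pm 1)$; these generate a rank-$2$ subgroup of $\R\times\Z$ with lex order, so the value group has rank $2$ and, up to order-preserving isomorphism, equals $\Z^2_{\lex}$. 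Finally, the first truncation of $v_\pm(C,s)$ is $v_1(C,s;\cdot)$, which is strictly positive on $x$ and $y-\xi(x)$ and hence on the maximal ideal of $\mathcal O_{\P^2,O}$; its centre is therefore $O$, which is by definition the home of $v_\pm(C,s)$.

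The case $s=1$ differs only in that $\partial_-v_1(C;f)(1)$ is undefined (the function $v_1(C;f)$ is only considered on $[1,+\infty)$), so only $v_+(C,1)$ can be formed; the lex-minimum description and the argument above apply to it verbatim. I expect the main technical obstacle to lie in the multiplicativity step, where the uniqueness of the leading term in the $C$-expansion of $fg$ rests crucially on the injectivity and additivity of $w_\pm$; this is what forces $v_\pm(C,s)$ to be a genuine rank-$2$ valuation rather than merely the rank-$1$ data $v_1(C,s;\cdot)$ paired with a discontinuous function of $f$.
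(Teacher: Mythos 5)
Your proposal is correct and follows essentially the same route as the paper: the paper's proof likewise identifies $\partial_+v_1(C;f)(s)$ and $\partial_-v_1(C;f)(s)$ with the minimal and maximal $j$ among the monomials of the $C$-expansion achieving $\min(i+sj)$ (which is exactly your lexicographic-minimum description), leaves the verification of the valuation axioms to ``basic properties of multiplication of Laurent series and $\min$'' --- the step you carry out in detail --- and establishes rank $2$ by exhibiting, for $s=p/q$, the element $f_1=f_0^q/x^p$ of value $(0,\pm q)$, which is precisely the nonzero infinitesimal element implicit in your assertion that $(1,0)$ and $(s,\pm1)$ generate a rank-$2$ ordered group. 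The only place you are terser than the paper is that last point, where it is worth noting explicitly that rationality of $s$ is what produces the nontrivial convex subgroup generated by $(0,\pm q)$.
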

\begin{proof}
Let $f\in K[x,y]$ and let $(x,\xi(x))$ be a local parametrization of $C$. With notation as in \eqref {eq:c-exp}, then \eqref {monomialdef} holds, thus
$$\partial_- v_1(C,\,\cdot\,;f)(s)=\max\{j|\exists i: a_{ij}\ne 0\ ,i+sj=v_1(C,s;f)\}$$
and
\begin{equation}\label{eq:+} \partial_+ v_1(C,\,\cdot\,;f)(s)=\min\{j|\exists i: a_{ij}\ne 0\ ,i+sj=v_1(C,s;f)\}.\end{equation}
The fact that both $v_-(C,s), v_+(C,s): \mathbb K^* \longrightarrow \Q^2_{\lex}$ are valuations follows from basic properties of multiplication of Laurent series and $\min$ and is left to the reader.
Furthermore, if $f$ is regular at $O$ then
$v_1(C,s;f)>0$ if and only if $f(O)= 0$. This implies that $O$ is the home of $v_- (C,s)$ and $v_+ (C,s)$.

Obviously $v_- (C,s)$ and $v_+ (C,s)$ have rank at most 2.
We will show that they have rank greater than 1.
Let $f_0\in K[x,y]$ be such that $f_0=0$ is an equation of $C$ (this, for fixed $s$, is no restriction by Remark \ref {rem:ocs}). We have $v_\pm (C,s;f_0)=(s,\pm1)$ (see Example \ref {ex:C}). Moreover if $s=\frac pq $ for coprime positive integers $p,q$ and $f_1=\frac {f_0^q}{x^p}$ then $v_\pm (C,s;f_1)=(0,\pm q)$.
Thus for every positive integer $k$ we have
\[(0,0)<\pm k v_\pm (C,s;f_1)<v_\pm (C,s;x),\] which is impossible for a rank 1 valuation.
\end{proof}

\begin{Rem}
 For irrational $s$, the expressions $v_-$ and $v_+$ (as defined in  Proposition~\ref{deriv_val}),
 are valuations with home at $O$, but they are both equivalent to $v_1$ (and so have real rank 1 and rational rank 2).
 We will not need this fact, and we leave the proof to the interested reader.
\end{Rem}

\begin{Rem}\label{remark:volume-OK}
 Write $s=\frac pq$ with $p, q$ coprime positive integers. Then
the value group of $v_- (C,s)$ and $v_+ (C,s)$
is $(\Z\frac1q \times \Z)_{\lex}\subset \Q^2_{\lex} $.
In this case, we will denote by
 \[
 \Delta_{C,s_-},  \Delta_{C,s_+} \ \subseteq \R^2_+
 \]
 the \NO bodies associated to the line bundle $\O_{\P^2}(1)$ with respect to the valuation $v_- (C,s)$ and $v_+ (C,s)$ respectively.

Since $v_\pm(C,s)$ have maximal rank but their
value groups do not equal $\Z^2_{\lex}$, the volumes of \NO bodies
associated to these
valuations need not satisfy Theorem~\ref{thm:LM2}. However,
%they are equivalent to flag valuations with values in $\Z^2_{\lex}$, i.e., 
there are order preserving elements of $\operatorname{GL}(2,\Q)$
relating the $v_\pm$ valuations to valuations with values in $\Z^2_{\lex}$.
In \S\ref{sec:pm-flag} below we compute these lower triangular
matrices, which turn out to have determinant 1, and so preserve
the volume. Therefore Theorem~\ref{thm:LM2} also applies
to~$v_\pm (C,s)$, and $\vol  \Delta_{C,s_-}= \vol  \Delta_{C,s_+}
= (\vol (\O_{\P^2}(1)))/2=1/2$.
\end{Rem}

\subsection{The $\hat \mu$ invariant}\label{ssec:mu}

Let $v_1$ be a rank 1 valuation centred at a smooth point $x$ of a normal irreducible projective  surface $X$,
and let $D$ be a big Cartier divisor on~$X$.
Following \cite{DHKRS}, we set
\[
\mu_D (v_1) \deq  \max\{v_1(f)\,|\, f\in H^0(X,\O_X(D))-\{0\} \}\ , \
\text{and} \
\hat \mu_D (v_1) \deq  \lim_{k \to \infty}\frac{\mu_{kD}(v_1)}{k}\ .
\]
%
%Following DHKRS, write
%\[
%\mu_d (v_1) = \max\{v_1(f)|f\in \C[x,y], \deg f \le d\}\ , \quad
%\text{and} \quad
%\hat \mu(v_1) = \lim_{d\to \infty}\frac{\mu_d(v_1)}{d}\ .
%\]
%
If $v=(v_1,v_2)$ is a valuation of rank 2 centred at $x$, then $\Delta_v(D)$
lies in the strip
\[
\{(t,u)\in \R^2 | 0\leqslant t\leqslant \hat\mu_D (v_1)\}\ ;
\]
and  its projection to the $t$-axis lies the interval
$[0,\hat\mu_D (v_1)]$, coinciding with it if and only if $x\not\in {\rm Neg}(D)$ (see Theorem \ref {thm:LK}).

In order to simplify notation, we will set
\[
\mu_D(C,s) = \mu_D(\vv{C}{s}), \,\,\, \text{
 and} \,\,\,
\hat\mu_D(C,s)=\hat \mu_D (\vv{C}{s}).
\]
If $X=\mathbb P^2$, $x=O$ and $D$ is a line, we  drop the subscript $D$
for $\hat\mu_D(C,s)$ and we write $\mu_d(C,s)$ instead of $\mu_{dD}(C,s)$ for any non-negative integer $d$.

From \cite{DHKRS} we know that  the function
$\hat\mu:\mathcal {QM}\rightarrow \R$ is
lower semicontinuous for the weak topology and continuous
for the strong topology, i.e.,  $\hat\mu(C,s)$ is continuous for  $s\in [1,+\infty)$ (see \cite [Proposition 3.9]{DHKRS}). Moreover
$\hat\mu(C,s)\geqslant \sqrt{s}$ \cite{DHKRS}. % or Remark~\ref{rem:sh2} below). 
If $\hat\mu(C,s)=\sqrt{s}$, then $v_1(C,s)$ is said to be \emph{minimal} (the concept of \emph{minimal valuation} is more general, see \cite {DHKRS}, but we will not need it here).
We recall from \cite{DHKRS} the following:

\begin{Con} [{\cite[Conjecture 5.11]{DHKRS}}]\label{con:nn} If $C$ is \emph{sufficiently general} (in a sense which is made precise in l.c.) and $s\geqslant 8+ \frac 1{36}$, then $\hat\mu(C,s)=\sqrt{s}$.
\end{Con}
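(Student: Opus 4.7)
The plan is to reduce the conjecture to a classification of supraminimal curves on the models $X_s$ and then attack that classification via Mori-cone analysis. Since $\hat\mu(C,s)\geqslant\sqrt{s}$ holds universally, equality fails precisely when $X_s$ carries a supraminimal curve giving $\vv{C}{s}(\Gamma)>\deg(\Gamma)\sqrt{s}$ (see \S\ref{sec:supraminimal}, to be developed later). An analysis carried out in \cite{DHKRS} shows that in the range $s\geqslant 8+\tfrac{1}{36}$ a supraminimal curve which is a $(-1)$-curve cannot push $\hat\mu(C,s)$ above $\sqrt{s}$; the particular threshold $8+\tfrac{1}{36}$ encodes exactly this numerical fact. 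Thus it suffices to prove the classification statement: for $C$ sufficiently general and $s\geqslant 8+\tfrac{1}{36}$, every supraminimal curve on $X_s$ is a $(-1)$-curve.

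Next, I would work arc by arc in $\mathcal{QM}$, concentrating first on integer $s=n$, where $\vv{C}{n}$ is divisorial and $X_n$ is the blowup of $\P^2$ at $n$ infinitely near points along a generic germ $C$. Supraminimal classes then form a countable subset of $\Pic(X_n)$, and one would rule out classes of self-intersection $\leqslant-2$ by combining: (i) SHGH-type linear-system estimates that bound $h^0$ of the linear system associated to a candidate class; (ii) a specialization argument (Horace method, or collision of fat points) that replaces the infinitely near configuration by independent very general points on $\P^2$, so that standard general-position techniques apply; (iii) strong-topology continuity of $\hat\mu$ along the arc, combined with upper semicontinuity of the Mori cone under generization, to propagate the divisorial conclusion from integer $s=n$ to non-integer $s$; (iv) a countable intersection, over the countably many classes and arcs, to produce a single family of sufficiently general $C$ on which the conclusion holds uniformly.

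The principal obstacle, and the reason that Conjecture~\ref{con:nn} is stated as a conjecture rather than a theorem, is that it contains Nagata's conjecture as a special case: specializing $s=n\geqslant 10$ to a non-square integer and the infinitely near points to very general independent points of $\P^2$, the non-existence of a supraminimal curve of self-intersection $\leqslant-2$ is exactly Nagata's assertion that no plane curve of degree $d$ has multiplicity exceeding $d/\sqrt{n}$ at $n$ very general points. Nagata's conjecture is open for every non-square $n\geqslant 10$, and no currently known technique rules out higher self-intersection supraminimal classes in this generality. The cut-off $8+\tfrac{1}{36}$ encodes the known classification of supraminimal classes at $s=8$ rather than any new input which would allow one to bypass Nagata. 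Consequently the plan above is feasible as a reduction scheme and propagation argument, but the hard core --- steps (i) and (ii) in the integer-$s$ case --- is precisely where Nagata-type positivity enters and must, for now, be taken as the conjectural input.
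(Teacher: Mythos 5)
The statement you were asked to prove is a \emph{conjecture} (Conjecture 5.11 of \cite{DHKRS}, restated here as Conjecture \ref{con:nn}); the paper offers no proof of it, and none is known. Your proposal correctly recognizes this: you do not claim a proof, but rather a reduction scheme whose hard core you explicitly flag as conjectural input. That assessment is accurate. The reduction you describe --- equality $\hat\mu(C,s)=\sqrt{s}$ fails exactly when a supraminimal curve exists, and the expectation that for sufficiently general $C$ and $s\geqslant 8+\tfrac1{36}$ all supraminimal curves are $(-1)$-curves --- is precisely the form in which the conjecture is presented in the introduction of this paper and in \cite{DHKRS}. Your identification of the obstruction is also essentially right: by \cite[Proposition 5.4]{DHKRS} (cited in \S\ref{ssec:mu}), even a weaker form of the conjecture (only $s\geqslant 9$, arbitrary $C$) implies Nagata's conjecture, which is open for all non-square $t\geqslant 10$; so no argument along the lines of your steps (i)--(ii) can currently be completed.

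Two small cautions on the details. First, the passage from the infinitely near cluster of centres of $v_1(C,n)$ (which lies along a smooth germ $C$) to $n$ \emph{independent} very general points of $\P^2$ is not a straightforward specialization; the implication towards Nagata in \cite{DHKRS} is a genuine argument, not merely a degeneration of the point configuration, so your step (ii) is doing more work than the phrase ``Horace method'' suggests. Second, the threshold $8+\tfrac1{36}=\bigl(2+\tfrac56\bigr)^2$ arises from the numerics of when a $(-1)$-curve can be supraminimal (the last Orevkov-type obstruction), not from ``the known classification of supraminimal classes at $s=8$''; below that threshold supraminimal $(-1)$-curves genuinely occur (cf.\ Proposition \ref{prop:Or} and Remark \ref{muhatDHKRS}), and above it the conjecture asserts none do. Since the task was to prove a statement that is, as far as anyone knows, out of reach, the only fully correct answer is the one you in effect gave: a reduction plus an honest acknowledgement that the remaining step is equivalent in strength to an open problem.
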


\begin{Rem}
According to \cite [Proposition 5.4]{DHKRS}, this Conjecture (actually a  weaker form of it, considering only $s\geqslant 9$ and $C$ any curve), implies Nagata's Conjecture.
\end{Rem}

\begin{Rem}\label{muhatDHKRS}
We  recall from \cite{DHKRS} some known values of $\hat\mu (C,s)$.
\begin{itemize}
 \item If $C$ is a line, then
 \[
 \hat\mu(C,s)=
\begin{cases}
 s & \text{if }1\leqslant s\leqslant 2 \\
 2 & \text{if }2 \leqslant s
\end{cases}
 \]
 \item If $C$ is a conic, then
 \[
 \hat\mu(C,s)=
\begin{cases}
 s & \text{if }1\leqslant s\leqslant 2 \\
 2 & \text{if } 2 \leqslant s\leqslant 4 \\
 \frac{s}{2} & \text{if } 4 \leqslant s\leqslant 5 \\
 5/2 & \text{if }5 \leqslant s
\end{cases}
 \]

\item If $s\leqslant 7+1/9$ and $\deg (C) \geqslant 3$, then
\[
\hat\mu(C,s)=
\begin{cases}
 \frac{F_{i-2}}{F_{i}}\,s & \text{if } \frac{F_{i}^2}{F_{i-2}^2} \leqslant s\leqslant \frac{F_{i+2}}{F_{i-2}},\,\,\,
  i\geqslant 1\text{ odd},\\
 \frac{F_{i+2}}{F_{i}} & \text{if }\frac{F_{i+2}}{F_{i-2}} \leqslant s\leqslant \frac{F_{i+2}^2}{F_{i}^2}\ ,
 \,\,\, i\geqslant 1\text{ odd},\\
 \frac{1+s}{3} & \text{if }\phi^4\leqslant s\leqslant 7 ,\\
 \frac{8}{3} & \text{if }7  \leqslant s\leqslant 7+\frac 19 ,
\end{cases}
\]
where $F_{-1}=1$, $F_0=0$ and $F_{i+1}=F_i+F_{i-1}$ are the \emph{Fibonacci numbers}, and $$\phi=\frac{1+\sqrt{5}}{2}=\lim\limits_{i\to \infty} \frac{F_{i+1}}{F_i}$$
is the  \emph{golden ratio}.

The values of $\hat \mu$ above are computed using the series of \emph{Orevkov rational cuspidal curves} (see \cite {Ore02} and Proposition \ref {prop:Or} below). There are a few more sporadic values of $s$ in the range $[7+1/9,9]$ where the value
of $\hat\mu$ is known, see \cite{DHKRS} for details.

\item If $s$ is an integer square and $C$ is a general curve of degree
at least $\sqrt{s}$, then one has $\hat\mu(C,s)=\sqrt{s}$.
\end{itemize}
\end{Rem}

\section{Cluster of centres and associated flags}
\label{flagmodel}

%\todo[inline]{The first version was pasted from DHKRS. If needed, expand and restructure this section. It should become clear to the reader that the correspondence \{$s\leftrightarrow$  blowup model\} is straightforward. Notations have to be uniformized with the rest of the paper.}

In this section the main goal is to introduce the geometric structures related to valuations $\vv{C}{s}$ and $v_{\pm}(C,s)$. We give a full description of how to find the birational model of $X$ (the cluster of centres together with their weights) on which these two valuations are equivalent to a flag valuation on this model.

\subsection{Weighted cluster of centres}

 As usual, we  will refer to the case
 $$x=O\in \mathbb A^2\subset \mathbb P^2:=X_0.$$

Each valuation $v$ with centre $O\in\P^2$ determines a \emph{cluster of centres} as follows. Let $p_1=\cent_{X_0}(v)=O$.
Consider the blowup $\pi_1:X_1\rightarrow X_0$ of $p_1$ and let $E_1\subset X_1$ be the corresponding exceptional divisor. Then $\cent_{X_1}(v)$ may either be  $E_1$ or a point $p_2\in E_1$. Iteratively blowing up the centres $p_1, p_2, \dots$ of $v$ we either end up, after $k\geqslant 1$ steps, with a surface $X_{k}$ dominating $\mathbb P^2$,  where the centre of $v$ is the exceptional divisor $E_{k}$. In this case $v$ is discrete of rank $1$, given by the order of vanishing along $E_k$, by Remark \ref {divisorial}. Otherwise, this process goes on indefinitely. In particular, for quasimonomial valuations $\vv{C}{s}$, the process terminates if and only if the characteristic
exponent $s$ is rational.

Let $v=(v_1,v_2)$ be now a rank $2$ valuation whose truncation $v_1$ is quasimonomial. From Abhyankar's inequalities, \cite [p. 12]{FJ04}, one concludes that $v_1$ has rational rank $1$. Hence, by Remark \ref {rem:qm}, we have $v_1=v_1(C,s)$ for
some $s\in \Q$. By above then, the sequence of centres of $v$ is infinite, whereas the sequence of $1$-dimensional homes (centres of $v_1$) terminates at a blowup $X_k$ where $\cent_{X_k}(v_1)=E_k$ is an exceptional divisor. In particular, $v$ is equivalent to the valuation $\nu_{Y_\bullet}$, defined by the flag
\[
Y_\bullet: X_k \supset E_k \supset \cent_{X_k}(v)=p_{k+1}.
\]
The punchline of all this is that the process of blowing up all $0$-dimensional centres of the truncation
provides an effective method to find a model  where a given rank 2 valuation becomes a flag valuation. By Theorem
\ref{flag_valuation}, such a model exists for every valuation of maximal
rank on a projective variety. The above method works for any valuation of rank 2 on any projective surface (i.e., not necessarily $\mathbb P^2$).

For each centre $p_i$ of a valuation $v$, general curves through $p_i$
and smooth at $p_i$
have the same value $e_i=v(E_i)$, which we call the \emph{weight} of $p_i$ for $v$.
Following \cite[Chapter 4]{Cas00}, we call the (possibly infinite) sequence
$\mathfrak K_v=(p_1^{e_1}, p_2^{e_2},\dots)$ the \emph{weighted cluster of centres} of $v$.
In general a sequence like $\mathfrak K=(p_1^{e_1}, p_2^{e_2},\dots)$ is called a \emph{weighted cluster of points}
and ${\rm supp}(\mathfrak K)=(p_1, p_2,\dots)$ is called its \emph{support}.

 If $v$ is a valuation with centre at $O$, then its weighted cluster
 of centres completely  determines $v$.
 Indeed,  for every effective divisor $Z$ on $\P^2$, one has
 \begin{equation}
\label{eq:proximity}
 v(Z)=\sum_i e_i\cdot \mult_{p_i} (\widetilde Z_i),
\end{equation}
 where $\widetilde Z_i$ is the
proper transform of $Z$ on  $X_i$, whenever the sum on the right
has finitely many non-zero terms.
This is always the case unless $v$ is a rank 2 valuation with
home at a curve through $O$ and $Z$ contains this curve;
in particular, for valuations
of rank 1, such as $v_1(C,s)$, formula
\eqref{eq:proximity} always computes $v(Z)$ \cite[\S8.2]{Cas00}.

As usual, with the above notation,  we say that a curve $Z$ passes through
an \emph{infinitely near point} $p_i\in X_i$ if its proper transform $\widetilde Z_i$ on $X_i$ contains $p_i$.

\subsection{The cluster associated to $v_1(C,s)$}
The description of the cluster $\mathfrak K_{(C,s)}:= \mathfrak K_{\vv{C}{s}}$ is classical and we refer for complete proofs to \cite{Cas00}. Here, we merely focus on the construction of the cluster of centres for $\vv{C}{s}$ and its main properties that will be used in the next section. The cluster $\mathfrak K_{(C,s)}$ is a very specific one, and we will need the following definition to make things more clear.
\begin{Def}\label{def:prox}
With notation as above, the centre $p_i\in X_{i-1}$ is  called \emph{proximate to $p_j\in X_j$}, for $1\leqslant j<i\leqslant k$,
(and one writes $p_i\succ p_j$) if $p_i$ belongs to the proper transform $E_{i-1,j}$ on $X_{i-1}$ of the exceptional
divisor $E_{j+1}:=E_{j+1,j}$ over $p_j\in X_{j-1}$. For the cluster $\mathfrak K_{(C,s)}$, each $p_i$, with $i\geqslant 2$, is proximate to $p_{i-1}$ and to at most one other centre $p_j$, with $1\leqslant j<i-1$; in this case
$p_i=E_{{i-1,j}}\cap E_{i-1}$ and $p_i$ is called a   \emph{satellite point}. A point which is not satellite is called \emph{free}.
\end{Def}

We know that the support of the cluster $\mathfrak K_{(C,s)}= \mathfrak K_{\vv{C}{s}}$ is determined by the continued fraction expansion
\[
s=\frac pq =[n_1;n_2,\ldots, n_r] = n_1+\frac{1}{n_2+\frac{1}{n_3+\frac{1}{\ddots}}}\ ,
\]
where $p,q$ are coprime and $r\in \mathbb{Z}_{>0}\cup \{\infty\}$. Before moving forward, let's fix some notation. Let $k_i=n_1+\dots+n_i$ and $k=k_r$. We denote by
$$s_i=\frac {p_i}{q_i}=[n_1;n_2,n_3,\ldots, n_i], \,\,\, \text{for}\,\,\, i=1,\ldots, r$$
the \emph{partial fractions} of $s$, where ${p_i},{q_i}$ are coprime positive integers.

First, the cluster $\mathfrak K_{(C,s)}$ consists of $k=\sum n_i$ centres (if $s$ is irrational there are infinitely many centres). We focus here only on the case when $s\in\Q$, hence $r<\infty$. Set $\mathfrak K=\mathfrak K_{(C,s)}$ and for each $i=0,\ldots,k-1$ let $\pi_i: X_{i+1}\to X_{i}$ be the blow-up of $X_{i}$ at the centre $p_{i+1}$ with exceptional divisor $E_{i+1}$. As usual we start with $X_0:=\mathbb P^2$. Denote $X_{\mathfrak K}:=X_k$ and let $\pi: X_\mathfrak R\to X$ be the composition of the $k$ blowups.

With this in hand, we explain the algorithm for the construction of $\mathfrak K$. If $s=n_1$ (so that $r=1$), then the centre $p_{i+1}$ is the point of intersection of the proper transform of $C$ through the map $X_{i}\rightarrow X_0$ and the exceptional divisor $E_{i}$ of $\pi_{i-1}$, for each $i=1, \ldots , n_1-1$. When $r>1$, then the first $n_1+1$ (including $p_1$) centres of $\mathfrak K$ are obtained as in the case when $s$ was integral, i.e. these points are chosen to be free. The rest are satellites: starting from $p_{{n_1}+1}$ there are $n_2+1$ points proximate to~$p_{n_1}$, i.e. each $p_j$ is the point of intersection of the proper transform of $E_{n_1}$ and the exceptional divisor $E_{j-1}$. Thus, $E_{n_1}$ plays the same role for these centres as $C$ did in the first step. Then, one chooses $n_3+1$ points proximate to $p_{n_1+n_2}$ and so on. Since $r<\infty$, then the last $n_r$ points (not $n_r+1$) are proximate to $p_{n_1+\dots+n_{r-1}}$. The final space $X_{\mathfrak K}$ is where $\vv{C}{s}$ becomes a divisorial valuation, defined by the order of vanishing along the exceptional divisor $E_{k}\subseteq X_{\mathfrak K}$. Finally note that $C$ plays a role only in the choice of the first $n_1$ centres. This is due to Remark~\ref{rem:ocs}, saying that the valuation $\vv{C}{s}$ depends only on the $\lfloor s \rfloor$-th jet of $C$.

The weights in $\mathfrak K_{(C,s)}$ are proportional to the multiplicities of the curve with Puiseux series $y=\xi(x)+\theta x^ s$ at the points of ${\rm supp}(\mathfrak K_{(C,s)})$. These and the continued fraction expansion are computed as follows. Consider the euclidean divisions
\[
m_i=n_{i+1}m_{i+1}+m_{i+2}\quad {\rm of}\quad m_i\quad {\rm by}\quad m_{i+1},\quad \text{for}\quad  i=0,\ldots,r-1 \ ,
\]
where $m_0:= p, m_1:= q$. Then the first $n_1$ points of $\mathfrak K_{(C,s)}$ have weight
$$e_1=e_2=\dots=e_{n_1}=\frac{m_1}{q}=1,$$
the subsequent $n_2$ points have weight $m_2/q$, ..., the final $n_r$ points have weight $m_r/q=1/q$. Therefore the  \emph{proximity equality}
\begin{equation}
  \label{eq:proximity-weights}
e_j \,=\, \sum_{p_i \succ p_j} e_i \
\end{equation}
holds for all $j=0,\ldots, k-1$. Conversely, for every weighted cluster $\mathfrak K$ with finite support, in which every point is infinitely near to the previous one,  no satellite point precedes a free point, and the proximity equality holds, there exist
a smooth curve through $O$ and a rational number $s$ such that $\mathfrak K=\mathfrak K_{(C,s)}$.

\subsection{$v_\pm(C,s)$ and the associated flag valuation}
\label{sec:pm-flag}
In order to describe the flag valuation associated to $v_\pm(C,s)$, it is necessary to understand first the intersection theory of all the proper and total transforms of the exceptional divisors on $X_{\mathfrak K}$.

To ease notation, let $A_i$ (resp. $B_i$) be the proper (resp. total) transform of $E_i\subset X_i$ on $X_\mathfrak K$, for $i\in \{0,\ldots, k-1\}$.
Then:
\begin{Lem}\label{lem:intersections}
\begin{inparaenum}
\item [(i)] $A_k=E_k$ is the only curve with $A_i^2=-1$ for any $i=1, \ldots, k$; \\
\item [(ii)] $A_{k_i}=B_{k_i}-B_{k_i+1}-\dots-B_{k_{i+1}+1}$ and $A_{k_i}^2=-2-n_{i+1};$ for each $1\leqslant i<r-1$;\\
\item [(iii)] $A_{k_{r-1}}=B_{k_{r-1}}-B_{k_{r-1}+1}-\dots-B_{k}$ and $A_{k_{r-1}}^2=-1-n_r$;\\
\item [(iv)] $A_j=B_j-B_{j+1}$ and $A_j^2=-2$ for every $ j\in \{1,\ldots ,k\}\setminus \{k_1,\ldots,k_r\}$.
\end{inparaenum}
\end{Lem}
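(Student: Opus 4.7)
The plan is to reduce everything to two classical identities on $X_{\mathfrak K}$ together with a combinatorial count. The first identity is that the total transforms $B_1,\dots,B_k$ form an orthogonal system in the N\'eron--Severi lattice of $X_{\mathfrak K}$, with $B_i^2=-1$ and $B_i\cdot B_j=0$ for $i\ne j$. The second is the presentation of each proper transform as
\[
A_j \equ B_j \,-\, \sum_{p_\ell \succ p_j} B_\ell \,,
\]
where the sum runs over the centres of the cluster proximate to $p_j$. Both are standard; the second is proved by induction on the blowup sequence, since each time a point $p_\ell$ on the proper transform of $E_j$ is blown up, exactly one copy of $B_\ell$ must be subtracted to pass from the current proper transform of $E_j$ to the new one.

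The heart of the proof is then a counting step that reads off, from the explicit description of $\mathfrak K_{(C,s)}$ in terms of the continued fraction $s=[n_1;n_2,\dots,n_r]$, the list of centres proximate to each $p_j$. For $j$ in the interior of a block (that is, $j\notin\{k_1,\dots,k_r\}$) only the immediate successor $p_{j+1}$ is proximate to $p_j$. For $j=k_i$ with $1\le i<r-1$, the proximate centres consist of the $n_{i+1}$ satellites $p_{k_i+1},\dots,p_{k_{i+1}}$ of block $i+1$, together with the first centre $p_{k_{i+1}+1}$ of block $i+2$, which by construction lies on the proper transform of $E_{k_i}$; this totals $n_{i+1}+1$ points. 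For $j=k_{r-1}$ the same reasoning gives the $n_r$ satellites of the final block (no further block being available to contribute an extra point), and $p_k$ has no proximate successor at all.

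With this tally in hand, substituting into the expression for $A_j$ yields the identities in (ii), (iii), (iv) directly, and shows that $A_k=B_k=E_k$. Self-intersections then follow from the orthogonality of the $B_i$:
\[
A_j^2 \equ -1-\#\{p_\ell:p_\ell\succ p_j\}\,,
\]
producing the announced values $-2-n_{i+1}$, $-1-n_r$, $-2$ and $-1$. Statement (i) then comes essentially for free: since each $n_{i+1}\ge 1$, one has $-2-n_{i+1}\le -3$ and $-1-n_r\le -2$, so the only index $j$ with $A_j^2=-1$ is $j=k$.

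The only subtle point I expect in the whole argument is the proximity count, and within it the ``$+1$'' appearing in $n_{i+1}+1$: one has to check carefully that the first centre of block $i+2$ remains proximate to $p_{k_i}$, not only to $p_{k_{i+1}}$. This is exactly what the algorithmic description of $\mathfrak K_{(C,s)}$ guarantees, but it is the step where a careless reading would drop a term and throw off both the formula in (ii) and the self-intersection number; everything else is a mechanical translation from the combinatorics of the continued fraction into intersection numbers.
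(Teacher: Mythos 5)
Your proof is correct; the paper states this lemma without proof (deferring to the classical theory in \cite{Cas00}), and your argument --- the orthogonality $B_i\cdot B_j=-\delta_{ij}$, the identity $A_j=B_j-\sum_{p_\ell\succ p_j}B_\ell$, and the proximity count read off from the continued-fraction description of $\mathfrak K_{(C,s)}$ --- is exactly the standard one that the surrounding text implicitly relies on. In particular, your tally of the points proximate to $p_{k_i}$ (the $n_{i+1}+1$ points $p_{k_i+1},\dots,p_{k_{i+1}+1}$ when $i<r-1$, only the $n_r$ points $p_{k_{r-1}+1},\dots,p_k$ when $i=r-1$, and only $p_{j+1}$ for $j$ interior to a block) matches the algorithm for constructing the cluster, so all four statements follow; the only cosmetic slip is calling $p_{k_1+1}$ a satellite, whereas it is free, which does not affect the proximity relation or the count.
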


The sheaf $\pi^*(\mathcal I_{p_1\vert \mathbb P^ 2})$ is invertible on $X_\mathfrak K$ and defines the \emph{fundamental cycle} $E$ of~$\pi$. Write $E= \sum_{i=1}^k a_iA_i$. Then, making use of Lemma~\ref{lem:intersections}, the multiplicities $a_i$ can be easily computed as follows:
\begin{Lem}\label{lem:fundamental}
If one assumes $k_0=a_0=0$ and $a_1=1$, then the multiplicities of the fundamental cycle $E$ are computed by the following formula
\[
a_i=a_{k_{j-1}}(i-k_{j-1}-1)+a_{k_{j-1}+1}, \textup{ for }k_{j-1}+2\leqslant i\leqslant  k_{j}+\epsilon\textup{ and } 1\leqslant j \leqslant r
\]
where $\epsilon=0$ if $j=r$ and $\epsilon=1$ otherwise. In particular, $a_{k_j+1}=a_{k_j}+a_{k_{j-1}}$ for $1\leqslant j\leqslant r-1$.
\end{Lem}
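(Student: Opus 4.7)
The plan is to derive a recursion for the $a_i$'s from Lemma~\ref{lem:intersections} and then solve it block by block. Since $\pi^{*}\mathcal{I}_{p_1}$ equals the pullback ideal, the fundamental cycle $E$ coincides with the total transform $B_1$ of the first exceptional divisor, so $a_i$ is the coefficient of $A_i$ in $B_1$. Rewriting Lemma~\ref{lem:intersections} uniformly as
\[
 A_i \equ B_i - \sum_{p_l \succ p_i} B_l \qquad \text{for } i=1,\ldots,k,
\]
multiplying the $i$-th identity by $a_i$, summing over $i$ and comparing the coefficients of each $B_l$ on the two sides, one checks that $\sum_i a_i A_i = B_1$ if and only if $a_1=1$ together with
\[
 a_m \equ \sum_{l\,:\,p_m \succ p_l} a_l \qquad \text{for } m\geqslant 2.
\]

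Next one inserts into this recursion the explicit proximity structure of $\mathfrak{K}_{(C,s)}$ recalled in \S\ref{flagmodel}. Every $p_m$ with $m\geqslant 2$ is proximate to $p_{m-1}$, and its only possible additional proximity is to some $p_{k_{j-1}}$: this occurs precisely for $m\in\{k_{j-1}+2,\ldots,k_j\}$ (the satellite points of block $j\geqslant 2$) or for $m=k_j+1$ with $j\geqslant 2$ (the first point of block $j+1$, which is itself a satellite proximate to $p_{k_{j-1}}$). Setting $a_{k_0}:=0$ in order to subsume the free case $j=1$ as well, the recursion simplifies to $a_m=a_{m-1}+a_{k_{j-1}}$ whenever $m\in\{k_{j-1}+2,\ldots,k_j+\epsilon\}$, and reduces to $a_m=a_{m-1}$ (so $a_m=1$) for the free points in block~$1$. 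In particular one reads off the identity $a_{k_j+1}=a_{k_j}+a_{k_{j-1}}$ for $1\leqslant j\leqslant r-1$.

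It remains to solve the constant-coefficient linear recurrence inside each block. Induction on $t=i-k_{j-1}$, seeded by the boundary value $a_{k_{j-1}+1}=a_{k_{j-1}}+a_{k_{j-2}}$ and using $a_{k_{j-1}+t}=a_{k_{j-1}+t-1}+a_{k_{j-1}}$ for $t\geqslant 2$, yields
\[
 a_{k_{j-1}+t}\equ t\,a_{k_{j-1}}+a_{k_{j-2}}\qquad \text{for } 1\leqslant t\leqslant n_j+\epsilon.
\]
Substituting $a_{k_{j-2}}=a_{k_{j-1}+1}-a_{k_{j-1}}$ rewrites this as $a_{k_{j-1}+t}=(t-1)\,a_{k_{j-1}}+a_{k_{j-1}+1}$, which after relabelling $i=k_{j-1}+t$ is precisely the formula of the lemma. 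The delicate point of the proof is the bookkeeping of the satellite/free dichotomy at the first point of each block $j\geqslant 2$; the convention $a_{k_0}=0$ unifies the two subcases into a single closed-form expression.
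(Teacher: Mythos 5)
Your argument is correct and is precisely the computation the paper leaves implicit: the lemma is stated there without proof as an easy consequence of Lemma~\ref{lem:intersections}, and your inversion of the relations $A_i=B_i-\sum_{p_l\succ p_i}B_l$ to recover $E=B_1$, followed by solving the resulting proximity recursion $a_m=\sum_{p_m\succ p_l}a_l$ block by block, is exactly what Example~\ref{616} carries out in a special case. The only cosmetic point is that your intermediate expression $a_{k_{j-1}+t}=t\,a_{k_{j-1}}+a_{k_{j-2}}$ is undefined for $j=1$ (there is no $k_{-1}$), but the final substituted form $a_i=a_{k_{j-1}}(i-k_{j-1}-1)+a_{k_{j-1}+1}$ together with your separate observation that $a_m=1$ on the free points makes this harmless.
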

\begin{Rem}\label{rem:multiplicites}
Using both the equalities in Lemma~\ref{lem:fundamental}, one has $a_{k_j}=n_ja_{k_{j-1}}+a_{k_{j-2}}$ for any $j=2,\ldots ,r$, where $a_0=0$ and $a_{k_1}=1$.

On the other hand, using the partial fractions $s_i=\frac {p_i}{q_i}$ of $s=\frac{p}{q}$, one has the same recursive relations $q_j=n_jq_{j-1}+q_{j-2}$ for $j=2,\ldots ,r$, with $q_0=0$ and $q_1=1$. Thus, we get that $a_{k_j}=q_j$ for any $j=0, \ldots , r$. In particular, we have $a_{k_r}=q$.
\end{Rem}
In the following the pair $(p_{r-1},q_{r-1})$ of the partial fraction $s_{r-1}=\frac{p_{r-1}}{q_{r-1}}$ will play an important role, so we fix some notation. When $s$ is not an integer (i.e. $r\geqslant 2$), we set
$$p'=p_{r-1}, \quad q'=q_{r-1} \,\,\, \text{so that}\,\,\, s_{r-1}=\frac {p'}{q'}$$
If $s$ is an integer, i.e., $r=1$, then we set $p'=q'=1$.

In order to find the flags on $X_{\mathfrak K}$ associated to $v_\pm(C,s)$, we need to have a better understanding of the cycle $E$ through its dual graph. The dual graph of $E$ is a \emph{chain}, i.e. a tree with only two end points, corresponding to $A_1$ and $A_{n_1+1}$. If  $A$ is the proper transform of $C$ on $X_\mathfrak K$, then $A$ intersects $E$ only at one point on $A_{n_1+1}$.  Thus the dual graph of  $A+E$ is also a chain, with end points corresponding to $A_1$ and $A$. The curve $A_k$  intersects exactly two other components of $A+E$, precisely:\\
\begin{enumerate}
\item [(a)] if $s$ is not an integer (so that $r\geqslant 2$), then $A_k$ intersects
$A_{k-1}$ and $A_{k_{r-1}}$, whose multiplicities in the cycle $A+E$ are $a_{k-1}=q-q'$ and $a_{k_{r-1}}=q'$;\\
\item [(b)] if $s$ is an integer (so that $s=k=n_1$), then $A_k$  intersects $A_{k-1}$ and $A$, both having multiplicity one.
\end{enumerate}

Note that $A+E-A_k$ has two connected components, only one containing $A$. We denote this component by $A_+$ and the other by $A_-$.
We will denote by $x_\pm$ the intersection point of $A_k$ with $A_\pm$, and by $x$ the general point of $A_k$.

The  total transform $C^ *$ on $X_\mathfrak K$ of $C$ has the same support as $A+E$, but the multiplicities are different. In particular, denoting
\[
p''=
\begin{cases}
  p' & \text{if }r \text{ is odd} \\
 p-p' & \text{if }r \text{ is even}
\end{cases} \qquad
q''=
\begin{cases}
  q' & \text{if }r \text{ is odd} \\
 q-q' & \text{if }r \text{ is even}
\end{cases} \qquad
\]

\begin{Lem}\label{lem:loc} \begin{inparaenum}
\item [(i)] The divisor $C^ *$  contains $A_k$ with multiplicity $p$ and $C^ *-pA_k$ passes through $x_+$
(resp. $x_-$) with
multiplicity $p''$(resp. $p-p''$).\\
\item [(ii)]  The total transform $L$ of the line $x=0$ on $X_\mathfrak K$ contains $A_k$ with multiplicity $q$ and $L-qA_k$ passes through $x_+$ (resp. $x_-$) with
multiplicity $q''$ (resp. $q-q''$).
\end{inparaenum}
\end{Lem}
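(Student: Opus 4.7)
The claims naturally split into two halves: the coefficient of $A_k$ in the total transforms $C^*$ and $L$ (namely $p$ and $q$), and the multiplicities of the residual divisors at $x_\pm$.

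For the first half, the key identity is $\ord_{A_k} = q\cdot v_1(C,s)$ on $\mathbb K^*$: both valuations share the centre $A_k\subset X_\mathfrak K$, and by Remark~\ref{rem:qm} the value group of $v_1(C,s)$ is $\tfrac{1}{q}\Z$, so $q\cdot v_1(C,s)$ is the primitive $\Z$-valued divisorial valuation on that centre. Combined with $v_1(C,s)(f_C) = s = p/q$ from Example~\ref{ex:C} and $v_1(C,s)(x) = 1$ from Definition~\ref{seriesdef}, this gives $\ord_{A_k}(f_C) = p$ and $\ord_{A_k}(x) = q$, which are exactly the coefficients of $A_k$ in $C^*$ and $L$.

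For the second half, I would first observe that $C^* - pA_k$ and $L - qA_k$ no longer contain $A_k$ as a component, so their scheme-theoretic intersection with $A_k\cong\P^1$ yields effective divisors of degrees $-pA_k^2 = p$ and $q$ respectively, using $C^*\cdot A_k = L\cdot A_k = 0$ (they are pullbacks of divisors from $\P^2$) and $A_k^2 = -1$ from Lemma~\ref{lem:intersections}. By the chain description of $A+E$ recalled just before the lemma, the only irreducible components meeting $A_k$ are its two chain-neighbours, meeting $A_k$ transversally at $x_+$ and $x_-$. Hence the restricted divisor is $m_+[x_+] + m_-[x_-]$, with $m_\pm$ equal to the coefficient in $C^*$ (resp.~$L$) of the component adjacent to $A_k$ at $x_\pm$. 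For $L$, these coefficients are exactly the fundamental-cycle multiplicities $a_{k-1} = q - q'$ and $a_{k_{r-1}} = q'$ quoted earlier; for $C^*$, applying the argument of the previous paragraph to the truncated sub-cluster $\mathfrak K_{(C,s_{r-1})}$ gives the coefficient $p'$ at $A_{k_{r-1}}$, and an analogous computation on the alternative truncation $[n_1;\dots,n_r-1]$ (whose final convergent is $(p-p',q-q')$) gives $p - p'$ at $A_{k-1}$.

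Finally, to match each coefficient to the correct point $x_+$ or $x_-$, one must determine on which side of $A_k$ the component $A_+$ of $A+E-A_k$ containing $A$ lies. The point is that inserting each satellite chunk reverses the corresponding subchain of $A+E$, so $A_+$ alternates between the $A_{k_{r-1}}$-side and the $A_{k-1}$-side with the parity of $r$ --- precisely matching the parity-dependent definition of $(p'',q'')$. This parity-flip bookkeeping is the step I expect to require the most care; once it is carried out, the stated multiplicities $p'', q''$ at $x_+$ and $p-p'', q-q''$ at $x_-$ follow, with case (b), $r = 1$, being a limiting case in which $A$ itself plays the role of one of the two neighbours.
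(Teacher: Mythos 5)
Your proposal is correct, and its skeleton matches the paper's: pin down the coefficient of $A_k$, reduce the multiplicities at $x_\pm$ to the coefficients in $C^*$ (resp.\ $L$) of the two components of $A+E$ adjacent to $A_k$, and then decide which coefficient attaches to which point according to the parity of $r$. Where you genuinely diverge is in how the coefficients are computed. The paper writes $C^*=\tilde C+B_1+\cdots+B_{n_1+1}$ and extracts the coefficient of $A_k$ by applying Lemma~\ref{lem:intersections} repeatedly, inducting on the continued fraction and using that the numerator of $[n_1;\dots,n_r-1]$ is $p-p'$; the neighbours' coefficients are then obtained ``arguing as before''. You instead use the identity $\ord_{A_k}=q\cdot v_1(C,s)$ (forced by equivalence of the two valuations together with the value group $\tfrac1q\Z$ of Remark~\ref{rem:qm}), combined with $v_1(C,s;f_C)=s$ and $v_1(C,s;x)=1$, and you get the neighbours' coefficients by applying the same identity to the truncations $s_{r-1}$ and $[n_1;\dots,n_r-1]$. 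This replaces the induction by a one-line valuation computation, at the mild cost of having to note explicitly that the first $k_{r-1}$ (resp.\ $k-1$) points of $\mathfrak K_{(C,s)}$ form exactly the cluster of the truncated exponent, which follows from the proximity description; your degree count $(C^*-pA_k)\cdot A_k=p$, giving $m_++m_-=p$, is a useful consistency check absent from the paper. On the final parity step both you and the paper essentially assert which neighbour lies on the $A$-side of $A_k$; your ``each satellite chunk flips the side'' mechanism is the correct one (and is confirmed by the example $s=48/7$, where $r=3$ is odd and $x_+=A_{13}\cap A_7$), so this is not a gap, but it is the one place that would need to be written out in full.
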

\begin{proof} We prove only (i), the proof of (ii) being analogous.

When $s$ is an integer the assertion is trivial. So, assume that $s$ is not an integer (i.e. $r\geqslant 2$). We first show that the multiplicity of $A_k$ in $C^*$ is equal to $p$. This is done inductively on $k=n_1+\ldots +n_r$. From the standard properties of continuous fractions it is worth to note that the numerator of $[n_1;n_2,\ldots ,n_r-1]$ is equal to $p-p_{r-1}$, where $p_{r-1}$ is the numerator of the continued fraction
\[
s_{r-1} \ = \frac{p_{r-1}}{q_{r-1}} \ = \ [n_1;n_2,\ldots ,n_{r-1}] \ .
\]
The multiplicity of $A_k$ in $C^ *$ is the same as the multiplicity of $A_k$ in $B_1+\ldots +B_{n_1+1}$. So, using Lemma~\ref{lem:intersections} repeatedly, the statement follows easily.

The multipliticies of $C^ *-pA_k$ at $x_+$ and $x_-$ equal the multiplicities in $C^ *$ of $A_{k_{r-1}}$ and of $A_{k-1}$ respectively in this order if $r$ is odd, and reversed if $r$ is even (as $r\geqslant 2$). Arguing as before, one deduces easily also these statements.\end{proof}

\begin{Exa} \label{616}
Consider $s=48/7$. Its continued fraction is $[6;1,6]=6+\frac{1}{1+1/6}$.
Therefore the cluster of centers of $v_1(C,s)$ consists of 7 free
points on $C$ followed by six
satellites; of these, $p_8$ is proximate to $p_6$ and $p_7$, and each of
$p_9, \dots, p_{13}$ is proximate to its predecessor and to $p_7$.
See Figure \ref{fig:Enriquesexample}, where the weights $e_i$
are printed in boldface: $e_1=\mathbf{1}$, $e_2=\mathbf{6/7}$,
$e_3=\mathbf{1/7}$.

The proximities mean that the exceptional components are $A_6=B_6-(B_7+B_8)$,
$A_7=B_7-(B_8+\dots+B_{13})$, $A_{13}=B_{13}$ and, for all
$i\ne 6,7,13$, $A_i=B_i-B_{i+1}$. Solving for $B_1=E$ one gets the fundamental cycle
\[  E=A_1+A_2+A_3+A_4+A_5+A_6+A_7+2A_8+3A_9
+4A_{10}+5A_{11}+6A_{12}+7A_{13}. \]
Since $C$ goes through $p_1, \dots, p_7$ with multiplicity 1, its pullback to
$X_{\frak K}$ is
\begin{multline*}
 C^*=\tilde C+B_1+\dots+B_7=\\
 \tilde C + A_1+2A_2+3A_3+4A_4+5A_5+6A_6+7A_7+\\
 13A_8+20A_9+27A_{10}+34A_{11}+41A_{12}+48A_{13}.
\end{multline*}
Clusters are often represented by means of \emph{Enriques diagrams}
(see \cite[p.98]{Cas00}) as explained in Figure \ref{fig:Enriquesexample}
illustrating this example.
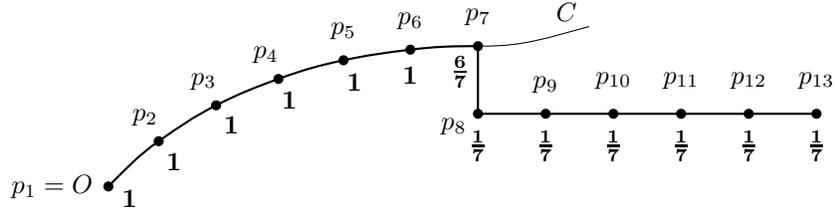
\begin{figure}
\centering
 \begin{tikzpicture}[label distance=3pt,auto]
    \tikzstyle{every node}=[draw,circle,fill=black,minimum size=3pt,
                            inner sep=0pt]
\draw [thick] (0,0) node (O) [label=left:{$p_1=O$},label=345:{\bf 1}] {} to [out=45,in=215]
     ++(42:.9cm) node (p2) [label=100:$p_2$,label=300:{\bf 1}] {} to [out=35, in=207]
     ++(32:.9cm) node (p3) [label=95:$p_3$,label=300:{\bf 1}] {} to [out=27, in=200]
     ++(23:.9cm) node (p4) [label=95:$p_4$,label=280:{\bf 1}] {} to [out=20, in=192]
     ++(16:.9cm) node (p5) [label=above:$p_5$,label=280:{\bf 1}] {} to [out=12, in=187]
     ++(9:.9cm) node (p6) [label=above:$p_6$,label=270:{\bf 1}] {} to [out=7, in=180]
     ++(3:.9cm) node (p7) [label=above:$p_7$,label=250:{$\mathbf{\frac67}$}] {} to
     ++(270:.9cm) node (p8) [label=185:$p_8$,label=270:{$\mathbf{\frac17}$}] {} to
     ++(0:.9cm) node (p9) [label=above:$p_9$,label=270:{$\mathbf{\frac17}$}] {} to
     ++(0:.9cm) node (p10) [label=above:$p_{10}$,label=270:{$\mathbf{\frac17}$}] {} to
     ++(0:.9cm) node (p11) [label=above:$p_{11}$,label=270:{$\mathbf{\frac17}$}] {} to
     ++(0:.9cm) node (p12) [label=above:$p_{12}$,label=270:{$\mathbf{\frac17}$}] {} to
     ++(0:.9cm) node (p13) [label=above:$p_{13}$,label=270:{$\mathbf{\frac17}$}] {};
\draw (p7) to  [out=0,in=195]   ++(10:1.5cm)  node
	(C) [draw=none,fill=none,label=150:$\tilde C$] {};
%\draw (p4) ++(330:1.3cm) node (p8) [label=right:$p_{8}$] {};
%\draw (O) to [out=50, in=180] (p1);
%%\draw (O) to [out=50,in=200] (p2) to [out=20, in=175] (p3);
%%\draw (p6) to [out=0,in=140] (p9);
%\draw (p4) to [out=270, in=150] (p7);
%\draw (O) to [out=45, in=180] (p7);
\end{tikzpicture}
\caption{\label{fig:Enriquesexample}
\small{The Enriques diagram \cite[3.9]{Cas00} of the cluster of centers of
Example \ref{616}. Each vertex in the diagram corresponds
to one of the points, with each vertex joined to its immediate prececessor by an edge;
edges are curved for free points, and straight segments for satellites, to
represent the rigidity of their position. The segments
joining a sequence of satellites proximate to the same point lie on the same line,
orthogonal to the immediately preceding edge.}}
%The Enriques diagram of the cluster of centers of a valuation
%$v_1(C,s)$ with $s=48/7=6+\frac{1}{1+1/6}$.}
\end{figure}
\end{Exa}

\begin{Pro}\label{prop:pm} In the above setting, the flags associated to the rank 2
 valuations $v_-(C,s)$ and $v_+(C,s)$ are
 \[
 Y_-:\quad X_\mathfrak K\supset A_k\supset x_-\quad \text{and}\quad
 Y_+:\quad X_\mathfrak K\supset A_k\supset x_+
 \]
 respectively.
\end{Pro}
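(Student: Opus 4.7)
The plan is to apply Proposition~\ref{iterated_divisorial}. Observe first that $X_{\mathfrak K}$ is smooth (as a composition of blowups of smooth points, starting from $\P^2$) and that $A_k = E_k \cong \P^1$ is smooth, so both flags $Y_+$ and $Y_-$ are good. It therefore suffices to show that the flag of centres of $v_\pm(C,s)$ on $X_{\mathfrak K}$ coincides with $Y_\pm$. The first step of the flag is immediate: by the construction of the cluster $\mathfrak K_{(C,s)}$, the truncation $v_{\pm,1} = v_1(C,s)$ is equivalent to the divisorial valuation $\ord_{A_k}$, whose centre is $A_k$. The remaining task is to identify the centre $q_\pm$ of the rank $2$ valuation $v_\pm(C,s)$, which, being contained in $A_k$ and of codimension $2$ in $X_{\mathfrak K}$, is a closed point of $A_k$.

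The idea for the second step is to exhibit a ``test function'' $f \in K(x,y)^*$ with $v_{\pm,1}(f) = 0$, so that $f|_{A_k}$ is a non--zero rational function on $A_k$; the sign of $v_{\pm,2}(f)$, combined with the zero/pole divisor of $f|_{A_k}$, will then pin down $q_\pm$. My choice is
\[
 f \deq \frac{(y-\xi(x))^q}{x^p},
\]
writing $s = p/q$ in lowest terms and, by Remark~\ref{rem:ocs}, taking $\xi$ to be a polynomial so that $f$ is a bona fide rational function on $\P^2$. The $C$--expansion of $f$ has a single non--zero term corresponding to $(i,j) = (-p,q)$, and hence $v_1(C,\sigma;f) = -p + \sigma q$ is linear in $\sigma$. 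Consequently $v_1(C,s;f) = 0$ and $\partial_\pm v_1(C,\,\cdot\,;f)(s) = q$, so by Proposition~\ref{deriv_val} one has $v_{+,2}(f) = q > 0$ and $v_{-,2}(f) = -q < 0$.

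Next I would compute the divisor $(f)|_{A_k}$ using Lemma~\ref{lem:loc}. Pulling back the principal divisor $(f) = q(y-\xi(x)) - p(x)$ to $X_{\mathfrak K}$ gives $(f)^* = qC^* - pL = q(C^*-pA_k) - p(L-qA_k)$, whose $A_k$--coefficient vanishes, and restricting to $A_k$ yields
\[
 (f)|_{A_k} \equ (qp'' - pq'')(x_+ - x_-).
\]
The standard continued fraction identity $pq_{r-1} - p_{r-1}q = (-1)^r$, combined with the case definition of $(p'',q'')$ according to the parity of $r$, gives $qp'' - pq'' = 1$ in every case. Hence $(f)|_{A_k} = x_+ - x_-$: a simple zero at $x_+$ and a simple pole at $x_-$. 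Since the induced rank~$1$ valuation $\bar v_\pm$ on the residue field $K(A_k)$ is equivalent to $\ord_{q_\pm}$ for a unique closed point $q_\pm \in A_k$, the sign of $\bar v_\pm(f|_{A_k}) = v_{\pm,2}(f)$ forces $q_+ = x_+$ and $q_- = x_-$, and Proposition~\ref{iterated_divisorial} delivers the claim. The main technical point is the continued--fraction bookkeeping $qp''-pq''=1$, which is routine but requires a case analysis on the parity of $r$ (with some additional care in the edge case $r = 1$).
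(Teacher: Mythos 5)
Your proof is correct and follows essentially the same route as the paper: both use the test function $f_0^q/x^p$ (yours differs only by a unit at $O$), Proposition~\ref{deriv_val} to get $v_\pm(f)=(0,\pm q)$, and Lemma~\ref{lem:loc} to locate the zero and pole of $f|_{A_k}$ at $x_+$ and $x_-$, whence the centre. Your version is somewhat more explicit about the determinant identity $qp''-pq''=1$ and the residue-field mechanism from Proposition~\ref{iterated_divisorial}, and your caveat about the case $r=1$ is warranted (there one must read $q''=0$ at $x_+=A_k\cap A$, consistent with the geometry rather than with the literal convention $p'=q'=1$).
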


\begin{proof} The above discussion makes it clear that $A_k$ is the centre of $v_1(C,s)$. It remains to prove that $x_\pm$ are the centres of $v_\pm(C,s)$. Let $\eta=0$ be a local equation of $A_k$ on $X_\mathfrak K$ around $x_+$. Consider  $f_1=f_0^q/x^p$ as in the proof of Proposition \ref{deriv_val}. By Lemma \ref {lem:loc}, the pull-back of $f_1$ to $X_\mathfrak K$ is not divisible by $\eta$. Again by Lemma~\ref{lem:loc}, it vanishes at $x_+$ with multiplicity $p''$.  Furthermore, by Proposition \ref{deriv_val}, one has $v_+(C,s)(f_1)>0$. By the same token, $f_1^{-1}$ is not divisible by $\eta$,
it vanishes at $x_-$ and has $v_-(C,s)(f_1^{-1})>0$, proving the assertion.\end{proof}

\begin{Rem}\label{rem:flag2} Unless $s$ is an integer and the sign $+$ holds, the valuations $v_\pm(C,s)$ are not equal to the evalutations associated to the flags $Y_\pm$ (see Remark \ref {rem:flag}), but they are equivalent to them.

Let $f_0=0$ be an equation of $C$ (which we may assume to be algebraic, see the proof of Proposition \ref{deriv_val}) and note that $K[[x,y]]\cong K[[x,f_0]]$. One has
\begin{equation}\label{eq:bat}
\quad v_\pm(C,s;x)=(1,0), \quad v_\pm(C,s;f_0)=(s,\pm1) \ ,
\end{equation}
by the proof of Proposition \ref{deriv_val}. By Lemma \ref {lem:loc}, one has
\[
\nu_{Y_+}(x)=(q,q''), \quad  \nu_{Y_+}(f_0)=(p,p'')
\]
\[
 \nu_{Y_-}(x)=(q,q-q''), \quad  \nu_{Y_-}(f_0)=(p,p-p'')\ .
\]
By standard properties of continued fractions, one has $pq'-qp'=(-1)^ r$. Thus
\[
v_+(C,s)= \begin{pmatrix}
 \frac 1 q & 0 \\ -q'' &  q
\end{pmatrix} \nu_{Y_+},
\]
\[v_-(C,s)= \begin{pmatrix}
 \frac 1 q & 0 \\ q''-q &  q
\end{pmatrix} \nu_{Y_-}.
\]
% if $r$ is odd, and 
% \[
% v_+(C,s)= \begin{pmatrix}
%  \frac 1 q & 0 \\ q'-q &  q
% \end{pmatrix} \nu_{Y_+}
% \]
% \[v_-(C,s)= \begin{pmatrix}
%  \frac 1 q & 0 \\ -q' &  q
% \end{pmatrix} \nu_{Y_-}
% \]
% if $r$ is even.
\end{Rem}
\begin{Rem}\label{rem:flag3}
The same relations, given in Remark \ref{rem:flag2}, hold for the corresponding \NO bodies. It is worth to note that both $2\times 2$ matrices transform vertical line into vertical lines. Furthermore, any vertical segment in $\Delta_{Y_{\pm}}(D)$ is translated into a vertical segment in $\Delta_{C,s_{\pm}}$ whose length is multiplied by a factor of $q$ with respect to the initial one, where $D$ is the class of a line.
\end{Rem}

%%%
\begin{comment}

\begin{Pro}
 Let $C$ be a nonsingular curve through $O$, and let $v_1(C,s)$ be as above. Then:\\
 \begin{inparaenum}
 \item For every non-zero $f\in K(X)$, $\partial_+ v_1(C,s;f)$ is constant for $s\gg 1$.
 We denote this value $v_1(C,\infty;f)$.\\
 \item For every $f\in K(X)$, $v_1(C,s;f)-s\cdot v_1(C,\infty;f)$ is constant for $s\gg 1$.
 We denote this value $v_2(C,\infty;f)$.\\
 \item $v_\infty(C)=(v_1(C,\infty),v_2(C,\infty))$ is a rank 2 valuation.\\
 \item The flag associated to $v_\infty(C)$ is $\{X\supset C\supset O\}$.
\end{inparaenum}

\end{Pro}
\end{comment}
%%%

\subsection{Zariski decomposition of valuative divisors}

In this subsection we will describe, with few details, some of the properties of the valuation $v_1$ that will be used in the next section. As before let $s=p/q\geqslant 1$ be a rational number and $\mathfrak K$ the cluster of centers associated to the rank $1$ valuation $v_1(C,s)$, with $\pi:X_{\mathfrak K}\rightarrow \P^2$ the sequence of blow-ups constructed in the previous section where the valuation $v_1(C,s)$ becomes equivalent to a valuation given by the order of vanishing along an exceptional curve on $X_{\mathfrak K}$. We will denote by
\[
B_s \ \deq  \ e_1B_1 + \ \ldots \ + e_kB_k \ ,
\]
where as usual $B_i$ is the pull-back of the exceptional divisor $E_i$ on $X_{\mathfrak K}$ and $e_i$ is the weight of the center $p_i$, whose blow-up is the curve $E_i$ (whereas $A_i$ is the strict transform in $X_{\mathfrak K}$ of $E_i$).
Note that the proximity equalities \eqref{eq:proximity-weights}
mean that $B_s\cdot A_i=0$ for all $1\leqslant i\leqslant k-1$, and that the weights
are also determined by these equalities and $e_k=1/q$
(see the proof of Theorem~2.2.2 in \cite{Cas00}).
Knowing this divisor $B_s$ we usually know almost everything about the valuation $v_1$. Using $(\ref{eq:proximity})$ one deduces the following:
\begin{Lem}\label{lem:intersection}
For a divisor $Z$ on $X_{\mathfrak K}$ not containing any of the exceptional curves $A_i$, one has $v_1(C,s;\pi_{*}(Z))=B_s\cdot Z$.
\end{Lem}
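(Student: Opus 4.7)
The plan is as follows. Set $Y\deq \pi_*(Z)$, a Weil divisor on $\mathbb{P}^2$. Because $Z$ contains none of the exceptional curves $A_i$ and $\pi$ is an isomorphism away from the exceptional locus, $Z$ is precisely the proper transform of $Y$ on $X_{\mathfrak K}$. Iterating the standard blowup identity $\pi_i^*D = \widetilde D + \mult_{p_i}(D)\,E_i$ along $\pi_1,\ldots,\pi_k$, one gets a decomposition
\[
\pi^*Y \ = \ Z + \sum_{i=1}^{k} m_i\, B_i, \qquad m_i \deq \mult_{p_i}(\widetilde Y_{i-1}),
\]
where $\widetilde Y_{i-1}$ is the proper transform of $Y$ on $X_{i-1}$, so that the coefficients $m_i$ agree with the multiplicities appearing in the cluster formula \eqref{eq:proximity}.

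Next, I would establish two elementary intersection-theoretic facts on $X_{\mathfrak K}$. First, the total transforms of exceptional divisors satisfy
\[
B_i\cdot B_j \ = \ -\delta_{ij},
\]
which follows from iterated application of the projection formula to each $\pi_\ell$: pullbacks preserve intersection numbers, $\pi_{\ell*}B_\ell = 0$, and on $X_i$ one has $E_i^2 = -1$. Second, since $B_s = \sum_j e_j B_j$ is supported on the exceptional locus of $\pi$, $\pi_*B_s = 0$, so by the projection formula
\[
B_s\cdot \pi^*Y \ = \ \pi_*(B_s)\cdot Y \ = \ 0.
\]

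Combining these, I would compute
\[
B_s\cdot Z \ = \ B_s\cdot \pi^*Y - \sum_{i=1}^k m_i\,(B_s\cdot B_i) \ = \ 0 - \sum_{i=1}^k m_i(-e_i) \ = \ \sum_{i=1}^k e_i m_i,
\]
which by \eqref{eq:proximity} equals $v_1(C,s;Y) = v_1(C,s;\pi_*Z)$, as desired. One must verify that formula \eqref{eq:proximity} is genuinely applicable here: since $v_1(C,s)$ is a rank~$1$ valuation, the discussion immediately following \eqref{eq:proximity} ensures the sum is finite and computes the valuation of any effective divisor on $\mathbb P^2$, so the argument goes through once we split $Z$ into its effective and anti-effective parts (or simply extend bilinearly, which is legitimate since both $v_1(C,s;\pi_*(\,\cdot\,))$ and $B_s\cdot(\,\cdot\,)$ are linear on divisor classes).

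The only real subtlety — not really an obstacle — is reconciling the paper's indexing convention $\mult_{p_i}(\widetilde Z_i)$ in \eqref{eq:proximity} with the convention $\mult_{p_i}(\widetilde Y_{i-1})$ used in expanding $\pi^*Y$; these must refer to the same quantity, since $p_i\in X_{i-1}$ is the centre being blown up to produce $E_i\subset X_i$.
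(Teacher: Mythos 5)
Your proof is correct and follows exactly the route the paper intends: the lemma is stated as a consequence of \eqref{eq:proximity}, and your computation via the decomposition $\pi^*Y = Z + \sum m_i B_i$, the orthogonality $B_i\cdot B_j=-\delta_{ij}$, and the projection formula is the standard way to deduce it. The indexing point you flag at the end is a genuine (harmless) inconsistency in the paper's notation, and your resolution of it is the right one.
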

For the computation of \NO bodies, the following properties of~$B_s$ will also be
useful.
\begin{Lem}\label{lem:Bproperties}
 \begin{inparaenum}
\item[(i)] $B_s^2=-s$, $\textup{ord}_{A_k}(B_s)=p$, $(B_s\cdot A_i)=0$ for any $i=1,\ldots, k-1$, and $(B_s\cdot A_k)=-1/q$. \\
\item[(ii)] For every positive $r\in \Q$ such that the $\Q$-divisor
$D_r=D-rB_k$ is effective, the Zariski decomposition of
$D_r$ contains $\frac{r}{p}B_s-rB_k$ in its negative part.
\end{inparaenum}
\end{Lem}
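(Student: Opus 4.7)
The plan is to establish (i) by direct intersection-theoretic computations on $X_\mathfrak K$ using the proximity structure of the cluster $\mathfrak K$, and then to derive (ii) from (i) by recognising $N_r:=\tfrac{r}{p}B_s-rA_k$ as a candidate effective ``exceptional'' negative part, showing the relevant orthogonality, and extracting the coefficientwise inequality $N\geqslant N_r$ via an $M$-matrix argument on the negative-definite exceptional intersection form.

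For (i), the proximity decompositions $A_i=B_i-\sum_{p_j\succ p_i}B_j$ (for $i<k$) and $A_k=B_k$, combined with $B_i\cdot B_j=-\delta_{ij}$ and the proximity equality~\eqref{eq:proximity-weights}, immediately give
\[
B_s\cdot A_i\,=\,-e_i+\sum_{p_j\succ p_i}e_j\,=\,0\quad\text{for }i<k,\qquad B_s\cdot A_k\,=\,e_k B_k^2\,=\,-\tfrac{1}{q}.
\]
For $B_s^2=-s$, expanding $B_s^2=\sum_{i,j}e_ie_j(B_i\cdot B_j)=-\sum e_i^2$, it remains to prove $\sum e_i^2=s$ by a telescoping argument. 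Using the Euclidean recursion $m_{i-1}=n_im_i+m_{i+1}$ (with $m_0=p$, $m_1=q$, $m_{r+1}=0$), the weights of $\mathfrak K$ group into $r$ batches of sizes $n_1,\ldots,n_r$ with values $m_j/q$, giving
\[
\sum_i e_i^2\,=\,\frac{1}{q^2}\sum_{j=1}^r n_j m_j^2\,=\,\frac{1}{q^2}\sum_{j=1}^r m_j(m_{j-1}-m_{j+1})\,=\,\frac{m_0m_1}{q^2}\,=\,s.
\]
Finally, writing $B_s=\sum_j c_jA_j$, only $c_k$ contributes to $B_s^2=\sum_j c_j(B_s\cdot A_j)=-c_k/q$, so $\ord_{A_k}(B_s)=c_k=qs=p$.

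For (ii), assume $D$ is pulled back from $\P^2$ (the relevant case for infinitesimal \NO bodies, giving $D\cdot A_j=0$ for all $j$). The coefficient of $A_k$ in $\tfrac{r}{p}B_s$ equals $\tfrac{r}{p}\cdot p=r$, which cancels $rA_k$ in $N_r$, while the remaining coefficients $\tfrac{r}{p}c_j$ are nonnegative, so $N_r$ is effective. By~(i),
\[
(D_r-N_r)\cdot A_j\,=\,D\cdot A_j-\tfrac{r}{p}B_s\cdot A_j\,=\,0\quad\text{for }j<k.
\]
Let $D_r=P+N$ be the Zariski decomposition and set $\tilde N:=N-N_r$. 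For each $j<k$,
\[
\tilde N\cdot A_j\,=\,(D_r-N_r)\cdot A_j-P\cdot A_j\,=\,-P\cdot A_j\,\leqslant\,0.
\]
Decomposing $\tilde N$ into its $A_{<k}$-, $A_k$-, and non-exceptional parts, and using that $A_k\cdot A_j\geqslant 0$ and every non-exceptional component $C$ of $N$ satisfies $C\cdot A_j\geqslant 0$ for $j<k$, one extracts the inequality $M\mathbf{z}\leqslant 0$ componentwise, where $\mathbf{z}=(\ord_{A_i}(\tilde N))_{i<k}$ and $M=(A_i\cdot A_j)_{i,j<k}$. Since $M$ is negative-definite with nonnegative off-diagonal entries, $-M$ is an $M$-matrix, so $(-M)^{-1}\geqslant 0$ entrywise and $M\mathbf{z}\leqslant 0$ forces $\mathbf{z}\geqslant 0$. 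This gives $\ord_{A_i}(N)\geqslant\tfrac{r}{p}c_i$ for all $i<k$, which together with $\ord_{A_k}(N_r)=0$ yields $N\geqslant N_r$.

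The main obstacle is the last step of (ii): one cannot conclude $N\geqslant N_r$ directly from orthogonality on exceptional components, since $D-\tfrac{r}{p}B_s$ need not be nef — strict transforms of high-multiplicity curves through $p_1$ can force further extraction into non-exceptional components. The $M$-matrix structure of the exceptional intersection form is precisely what isolates the exceptional contribution to the negative part and allows the desired coefficientwise inequality to be read off from intersection data alone.
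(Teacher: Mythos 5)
Your proof is correct. For part~(i) the paper is essentially silent (it says the computation is done ``inductively, as in the proof of Lemma~\ref{lem:fundamental}''), so your closed-form derivation --- orthogonality $B_s\cdot A_i=0$ straight from the proximity equalities, $\sum e_i^2=s$ by telescoping the Euclidean recursion $n_jm_j^2=m_j(m_{j-1}-m_{j+1})$, and $\ord_{A_k}(B_s)=p$ read off from $B_s^2=-c_k/q$ --- is a perfectly good and more explicit substitute. For part~(ii) the two arguments identify the same candidate, namely the unique effective exceptional divisor $\tfrac{r}{p}B_s-rB_k$ orthogonal to $A_1,\dots,A_{k-1}$, but they differ in how the domination by the full negative part $N$ is established. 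The paper forms the Zariski decomposition of $D_r$ \emph{relative to} $\pi$, shows by an iterative discharging computation that its negative part $N_\pi$ is supported on all of $A_1,\dots,A_{k-1}$, deduces that $N_\pi+rB_k$ is proportional to $B_s$ and matches the coefficient of $A_k$, and then invokes the general inclusion $N_\pi\leqslant N$ from Cutkosky--Srinivas. You instead verify the candidate directly and prove $N\geqslant N_r$ from scratch: the inequality $(N-N_r)\cdot A_j\leqslant 0$ for $j<k$, combined with the fact that the negated exceptional intersection matrix is a Stieltjes matrix (positive definite, nonpositive off-diagonal) and hence has entrywise nonnegative inverse, forces the exceptional coefficients of $N-N_r$ to be nonnegative. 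Your route is more self-contained --- it in effect reproves the inclusion $N_\pi\leqslant N$ rather than citing it --- at the cost of the linear-algebra digression; the paper's route yields slightly more information, since it computes the relative negative part exactly rather than only bounding $N$ from below. Both ultimately rest on the same inputs: effectivity of $\tfrac{r}{p}B_s-rB_k$, nefness of the positive part, and nonnegativity of intersections of distinct irreducible curves.
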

\begin{proof}
The proof of~(i) is done inductively using the description of the cluster of centers obtained previously, in a similar way as the proof of Lemma~\ref{lem:fundamental}.

Let us prove~(ii).
If $k=1$ then $s=p=1$, $B_s=B_k=B_1$ and there is nothing to prove; so assume
$k>1$. Since the intersection matrix
of the collection $\{A_1, \dots , A_k\}$ is negative definite,
there exists a unique effective $\Q$-divisor
$N_\pi= \sum \nu_i A_i$ with
\begin{enumerate}
\item[(a)] $(D_r-N_\pi)\cdot A_i\geqslant 0$ for all $1\leqslant i\leqslant k$,
\item[(b)] $(D_r-N_\pi)\cdot A_i= 0$ for all $i$ with $n_i\ne 0$.
\end{enumerate}
The Zariski decomposition of $D_r$ \emph{relative to $\pi$}
is $D_r=P_\pi + N_\pi$ (see \cite[\S8]{CutSri}).
It satisfies
$H^0(X_{\mathfrak K},\mathcal{O}_{X_{\mathfrak K}}(mD_r)) \cong
H^0(X_{\mathfrak K},\mathcal{O}_{X_{\mathfrak K}}(mP_\pi))$ for all
$m$ such that $mD_r$ is a Weil divisor, and the negative
part of this relative Zariski decomposition is a part of
the full Zariski decomposition: $N_\pi \leqslant N$.

We claim that $N_\pi=\frac{r}{p}B_s-rB_k$.
Set $Z_0=D_r$. It is clear that $Z_0 \cdot A_{k-1}=-r<0$, and therefore
\[N_\pi \geqslant N_1:=\frac{r}{-A_{k-1}^2} A_{k-1}, \]
i.e., $\nu_{k-1}\geqslant -r/(A_{k-1}^2)$.
Define $Z_1=Z_0-N_1$. Then $Z_1 \cdot A_{k-2}=r/(A_{k-1}^2)<0$
so
\[N-N_1 \geqslant N_2:=\frac{r}{A_{k-1}^2A_{k-2}^2} A_{k-2}, \]
i.e., $\nu_{k-2}\geqslant r/(A_{k-1}^2A_{k-2}^2)$.
Define $Z_2=Z_1-N_2$. Iterating the process,
we see that $n_i>0$ for all $1\leqslant i\leqslant k-1$.
% as long
% as there is some $A_{i_j}$ with $Z_j\cdot A_{i_j}<0$,
% \[N-N_1-\dots-N_j \ge N_{j+1}:=\frac{Z_j\cdot A_{i_j}}{-A_{i_j}^2} A_{i_j}, \]
% and we define $Z_{j+1}=Z_j-N_{j+1}$.
% The sequence of $\Q$ divisors $N_j$ so constructed satisfies
% \begin{itemize}
% \item $N_j\ge 0$ for all $j\ge 0$,
% \item $\sum_{\ell=0}^j \le N$ for all $j\ge 0$,
% \item $\sum_{\ell=0}^j \le N$ belongs to the span of $A_1, \dots, A_{k-1}$,
% \item $Z_j \cdot A_{i_j}=0$ and, for $i\ne i_j$, 
% $Z_j \cdot A_i \le Z_{j-1} \cdot A_i$,
% \end{itemize}
% The first three properties guarantee that either the sequence $N_j$
% is finite, in which case the sum $N_\infty$ of all of them satisfies
% that $(Z-N_\infty)\cdot A_i\ge 0$ for all $i$, or the sum $\sum N_j$
% converges to some $\R$-divisor $N_\infty\le N$ (because the coefficients
% of $A_i$ are nonnegative and their sum is bounded above) which
% again satisfies $(Z-N_\infty)\cdot A_i\ge 0$ for $1\le i\le k$. 
% On the other hand,
% the last property together with the obvious computation 
% $Z_0\cdot A_i=0$ for $1\le i\le k-1$ imply $(Z-N_\infty)\cdot A_i\le 0$
% for $1\le i\le k-1$. Therefore,
% \[(N_\infty+rB_k)\cdot A_i=- (Z-N_\infty)\cdot A_i= 0\quad
% \text{for }1\le i\le k-1.\]
Using property $(b)$ together with the obvious equalities
$D_r\cdot A_i=0$ for $1\leqslant i \leqslant k-1$ and $D_r\cdot A_{k-1}=-rB_k\cdot A_{k-1}$,
we see that $(N_\pi + rB_k)\cdot A_i=0$ for $1\leqslant i\leqslant k-1$.
Since $N_\pi + rB_k$ is supported on the union of~$A_i$, this implies
$N_\pi + rB_k= m B_s$ for some real number $m$. Since the order of $B_k=A_k$ in
this divisor is $r$, this number must be $m=r/p$, and the claim follows.
\end{proof}

\section{\NO bodies on the tree $\mathcal {QM}$}\label {sec:tt}

From now on we will mainly concentrate on the study of $\Delta_{C,s_+}$ when $s$
varies in~\mbox{$[1,+\infty)$}. The case of $\Delta_{C,s_-}$ is not conceptually different and will be often left to the reader.

\subsection{General facts}\label{upper_border}\label{ss:gen}

\begin{Cor}\label{stat}  Let $C\subseteq \P^2$ be a curve of degree $d$. For any $s\geqslant 1$, one has the following inclusions
\[
\Delta_{1,\frac sd, \pm \frac 1d}\subseteq \Delta_{C,s_\pm} \subseteq \Delta_{\hat\mu,\hat\mu,\pm\frac {\hat\mu}s} \ ,
\]
where $\hat\mu=\hat\mu(C,s)$. Equality for the first inclusion holds if and only if $d=1$. Equality for the second one takes place if and only if $\hat\mu(C,s)= \sqrt s$.

 \end{Cor}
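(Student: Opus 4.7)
The plan is to exhibit explicit valuative points realising the three vertices of the inner triangle, to derive effective upper bounds on every valuative point giving the outer triangle, and to settle both equality statements by comparing areas against the identity $\vol(\Delta_{C,s_\pm}) = 1/2$ recorded in Remark \ref{remark:volume-OK}. I discuss the case of $v_+(C,s)$ throughout; the $v_-$ case follows by replacing minima with maxima in \eqref{eq:+} and reversing signs.

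For the inner inclusion $\Delta_{1,s/d,1/d} \subseteq \Delta_{C,s_+}$, I will produce valuative points at the three vertices and invoke convexity of the \NO body. The vertex $(0,0)$ arises from any line not passing through $O$, since its defining section is regular and nonzero at $O$ and so $v_+(C,s)$ vanishes on it. The vertex $(1,0)$ is obtained from a line through $O$ transverse to $C$, say $x = 0$: the $C$-expansion of $x$ is $x$ itself, hence $v_+(C,s;x) = (1,0)$ by \eqref{monomialdef} and \eqref{eq:+}. The vertex $(s/d, 1/d)$ is supplied by the equation $f_0 \in H^0(\P^2, \O(d))$ of $C$: by Example \ref{ex:C} the coefficients $a_{i0}$ in the $C$-expansion of $f_0$ all vanish while $a_{01} \neq 0$, yielding $v_1(C,s;f_0) = s$ and $\partial_+ v_1(C;f_0)(s) = 1$, i.e.\ $v_+(C,s;f_0) = (s,1)$, so that $f_0$ furnishes the valuative point $(s/d,1/d)$.

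For the outer inclusion I will bound every valuative point $(t,u) = v_+(C,s;f)/k \in \Delta_{C,s_+}$. The inequality $t \leqslant \hat\mu$ is immediate from the definition of $\hat\mu$ recalled in \S\ref{ssec:mu}, since $v_1(f) \leqslant \mu_{kD}(v_1)$ and superadditivity of $k \mapsto \mu_{kD}$ ensures $\mu_{kD}/k \leqslant \hat\mu$. For the slanted bound, setting $j := \partial_+ v_1(C;f)(s)$, formula \eqref{eq:+} provides some $i \geqslant 0$ with $i + sj = v_1(C,s;f)$, whence $sj \leqslant v_1(C,s;f)$ and $u \leqslant t/s$. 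These two inequalities together with $u \geqslant 0$ cut out exactly the triangle $\Delta_{\hat\mu,\hat\mu,\hat\mu/s}$, and density of valuative points in the \NO body (Remark \ref{rem:val}) then yields the desired inclusion.

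The equality criteria now follow from area comparison. The inner triangle has area $\tfrac{1}{2d}$, equal to $\vol(\Delta_{C,s_+}) = 1/2$ precisely when $d = 1$; the outer triangle has area $\hat\mu^2/(2s)$, equal to $1/2$ precisely when $\hat\mu = \sqrt{s}$. Since a closed convex body strictly contained in another of the same positive area cannot share that area, each numerical equality forces equality of the bodies and conversely. No step is genuinely hard; the only point that needs minor care is matching sign and truncation conventions when transcribing the argument to $v_-(C,s)$, but this is routine given the symmetry in Proposition \ref{deriv_val}.
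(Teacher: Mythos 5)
Your proposal is correct and follows essentially the same route as the paper: valuative points at $(0,0)$, $(1,0)$ and $(s/d,\pm 1/d)$ (from a line missing $O$, the function $x$, and an equation of $C$) give the inner inclusion, the Newton--polygon inequality $i+sj\geqslant sj\geqslant 0$ together with the definition of $\hat\mu$ gives the outer one, and both equality statements are settled by comparing areas with $\vol(\Delta_{C,s_\pm})=1/2$. No substantive difference from the paper's argument.
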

\begin{proof}
For the first inclusion, note that by the proof of Proposition \ref{deriv_val} evaluating an equation of $C$ and the variable $x$, forces both points $(1,0)$ and $(\frac sd,\pm\frac 1d)$ to be contained in $\Delta_{C,s_\pm}$. The origin is also contained in $\Delta_{C,s_\pm}$ since it is the valuation of any line not passing through the centre of the valuation. For the equality statement one uses that the area $\Delta_{C,s_{\pm}}$ is $\frac{1}{2}$, by Theorem \ref{thm:LM2} and Remark~\ref{remark:volume-OK}.

For the second inclusion notice first that by definition of $\hat\mu(C,s)$ from \S\ref {ssec:mu} one has that the convex sets $\Delta_{C,s_{\pm}}$ sit to the left of the vertical line $t=\hat\mu$.  To prove that $\Delta_{C,s_{+}}$
 also lies above the $t$-axis and below the line $t=su$, we need to show
\[
v_1(C,s;f)\geqslant s \cdot\partial_+ v_1(C,s;f)\geqslant 0  \ , \forall f\in K[x,y]\setminus \{0\} \ .
\]
Assuming \eqref {eq:c-exp} holds, this follows from \eqref {monomialdef} and \eqref {eq:+}, as $i+sj \geqslant sj$. The equality statement is again implied by the fact that the area of $\Delta_{C,s_{+}}$ is equal to $\frac{1}{2}$. The analogous
facts for $\Delta_{C,s_{-}}$ are left to the reader.
\end{proof}
\begin{Rem}
As a consequence of the above, then $\Delta_{C,s_+}$ sits above the $t$ axis and below the line with equation $su=t$ in the $(t,u)$ plane. Also, notice that $(0,0)$ and $(\frac {s}{d}, \frac {1}{d})$ are valuative points, where the latter is given by the valuation of a local equation of $C$ by Remark \eqref {rem:val}. Thus, every point with rational coordinates on the line $su=t$, lying between the origin and the point $(s/d,1/d)$, is valuative. The corresponding picture also holds for $\Delta_{C,s_-}$.
\end{Rem}

\begin{Rem} The valuation $v_{\rm gen}$ associated to the \emph{generic flag}
$$Y_{\rm gen}:\quad X_\mathfrak K\supset A_k\supset x$$
has nothing to do with $C$. On $X_\mathfrak K$ there is a smooth curve $\Gamma$ transversally intersecting $A_k$ at $x$. Its image on $X$ has local equation $\phi=0$ at $p_1$. Assume $X=\mathbb P^2$ and~\mbox{$\deg(\phi)=d$}. Then for $f=0$ a general line through $p_1$ one has $v_{\rm gen}(f)=(q,0)$ and
$v_{\rm gen}(\phi)=(\frac q d,\frac 1 d)$. Thus $\Delta_{\nu_{Y_{\rm gen}}}(D)$, with $D\in |\mathcal O_{\mathbb P^2}(1)|$, contains $\Delta_{q, \frac q d,\frac 1 d}$. Since in general $d>q$ (equality may hold only if $s=n_1$), then $\Delta_{\nu_{Y_{\rm gen}}}(D)$ is strictly larger than this triangle by Theorem \ref {thm:LM2}.
\end{Rem}

\begin{Rem}\label{rem:sh2}  By Corollary \ref{upper_border}, we see that Conjecture \ref{con:nn} is equivalent to asking whether for all $s\geqslant 8+\frac 1 {36}$ and $C$ general enough, one has
\[
\Delta_{C,s_+}=\Delta_{\sqrt s,\sqrt s,\frac 1 {\sqrt s}} \ .
\]
In particular, this implies Nagata's Conjecture and it shows how difficult it is to compute Newton--Okounkov bodies.
\end{Rem}

\begin{Cor}
  Let $C$ be a plane curve of degree $d$. Then $\Delta_{C,d^2_{\pm}}=\Delta_{d,d, \pm\frac 1d}$.
\end{Cor}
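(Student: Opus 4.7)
My plan is to reduce the statement to the identity $\hat\mu(C,d^2)=d$. Indeed, Corollary~\ref{stat} asserts that the second inclusion $\Delta_{C,s_\pm}\subseteq \Delta_{\hat\mu,\hat\mu,\pm\hat\mu/s}$ becomes an equality precisely when $\hat\mu(C,s)=\sqrt{s}$; specializing to $s=d^2$, the right-hand side collapses to $\Delta_{d,d,\pm 1/d}$, which is exactly the target. The lower bound $\hat\mu(C,d^2)\geqslant d$ is immediate from the general inequality $\hat\mu(C,s)\geqslant\sqrt{s}$ recalled in \S\ref{ssec:mu}, and is witnessed concretely by $v_1(C,d^2;f_C)=d^2$ where $f_C$ is a local equation of $C$ (Example~\ref{ex:C}).

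For the opposite inequality I would unpack the cluster of centres introduced in \S\ref{flagmodel}. Since $d^2$ has trivial continued fraction expansion, $\mathfrak K_{(C,d^2)}$ consists of exactly $d^2$ free points $p_1,\ldots,p_{d^2}$ lying on the successive strict transforms of $C$, each with weight $e_i=1$. Formula \eqref{eq:proximity} then gives $v_1(C,d^2;D')=\sum_{i=1}^{d^2}\mult_{p_i}(\tilde D'_i)$ for every effective divisor $D'$ on $\P^2$ not containing $C$. Using Remark~\ref{rem:val} together with additivity of the valuation and $v_1(C,d^2;C)=d^2$ (so that components of type $C$ can be peeled off without harm), the upper bound $\hat\mu(C,d^2)\leqslant d$ reduces to showing that every effective $\Q$-divisor $D'$ of degree $k$ with $C\not\subseteq \mathrm{supp}(D')$ satisfies $\sum_{i=1}^{d^2}\mult_{p_i}(\tilde D'_i)\leqslant kd$.

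The main step is then a Bezout-type argument. The classical formula for the local intersection number at $O$ expresses $(C\cdot D')_O$ as a sum of products of multiplicities over the infinitely near points common to $C$ and $D'$; because $C$ is smooth at $O$ (so all its own multiplicities along any infinitely near chain are $1$) and its cluster contains $p_1,\ldots,p_{d^2}$ as the first $d^2$ centres, this sum is bounded below by $\sum_{i=1}^{d^2}\mult_{p_i}(\tilde D'_i)$. Bezout on $\P^2$ then yields $(C\cdot D')_O\leqslant C\cdot D'=kd$, closing the chain of inequalities. The only real obstacle is this classical local--global intersection estimate, and the minus case $\Delta_{C,d^2_-}$ is entirely parallel.
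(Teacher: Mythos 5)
Your proof is correct, and it arrives at the statement by the route the paper itself labels as the ``alternative'' one: reduce everything, via the equality criterion in Corollary~\ref{stat} (equivalently Remark~\ref{rem:sh2}) and the area constraint $\vol(\Delta_{C,d^2_\pm})=1/2$, to the single identity $\hat\mu(C,d^2)=d$, with the lower bound coming for free from $\hat\mu\geqslant\sqrt s$. Where you genuinely diverge is in the proof of the upper bound $\hat\mu(C,d^2)\leqslant d$. The paper works upstairs on $X_{d^2}$: it writes the strict transform $A\equiv dD-\sum_{i=1}^{d^2}iA_i$, observes that $A$ is nef with $A^2=0$, exhibits $D-dA_{d^2}=\frac{A}{d}+\sum_{i<d^2}\frac{i}{d}A_i$ as a Zariski decomposition sitting on the boundary of the pseudoeffective cone, and concludes that $D-tA_{d^2}$ is not pseudoeffective for $t>d$; this also feeds directly into Theorem~\ref{thm:LM} and gives the Newton--Okounkov body without any appeal to the area argument. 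You instead stay on $\P^2$: after peeling off copies of $C$ (needed so that the intersection with $C$ is proper), you bound $v_1(C,d^2;D')=\sum_{i=1}^{d^2}\mult_{p_i}(\widetilde D'_i)$ by Noether's formula for $(C,D')_O$ (using that $C$ is smooth at $O$, so all its multiplicities along the chain are $1$) and then by B\'ezout. The two arguments are morally dual --- intersecting with the nef class $A$ upstairs is B\'ezout plus Noether downstairs --- but yours is more elementary (no Zariski decomposition, no nefness check), while the paper's primary argument buys the explicit shape of the body directly from Theorem~\ref{thm:LM} rather than deducing it from the volume identity. Both are complete; the only external input you rely on, Noether's local--global intersection formula, is classical and is in the reference \cite{Cas00} already used for the cluster formalism of \S\ref{flagmodel}.
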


\begin{proof}
 The cluster of centres of $\vv{C}{d^ 2}$ consists of $p_1=O$ and the next $d^2-1$ points
 on $C$ infinitely near to $O$, i.e., $p_i=E_{i-1}\cap \tilde C$ for $i=2,\ldots,d^ 2$. If $A$ is the strict transform of $C$ on $X_{d^2}$, then
 $$A\equiv dD-E_1-\dots-E_{d^2}=dD-\sum_{i=1}^ {d^ 2} iA_i$$
 where $D$ is the pull back to $X_{d^2}$ of a line. Let $Z:=D-dA_{d^2}$, which can be written
 \[
Z \ \equiv \ \frac A d+ \sum_{i=1}^ {d^ 2-1} \frac i d A_i \ .
\]
Remark that this is actually the Zariski decomposition of $Z$, because $A$ is nef, as $A$ is irreducible and $A^2=0$, and $\sum_{i=1}^ {d^ 2-1} \frac i d A_i$  has clearly a negative definite intersection form. Also, $Z$ sits on the boundary of the pseudo-effective cone, as $A^2=0$. Thus, $Z-tA_{d^2}$ is not pseudo-effective for $t>0$.

Now the proof follows easily using Theorem~\ref{thm:LM}. Alternatively,
by Remark~\ref{rem:sh2}, it suffices to prove that $\hat\mu(C,d^2)={d}$.
Since $v_1=\textup{ord}_{E_{d^2}}$ as valuations
(by Remark~\ref{rem:flag2}, noting that $s$ is an integer)
one gets from the above paragraph that~\mbox{$\hat\mu(C,d^2)\leqslant {d}$}.
The opposite inequality follows from Lemma \ref{upper_border}.
 \end{proof}

\begin{Cor}\label{vertex_approach}
Let $C$ be a plane curve of degree $d$.
 For every $\epsilon>0$, there exists a non-zero $f_\epsilon\in K[x,y]$ whose
 $C$-expansion $f_\epsilon(x,y) = \sum a_{ij}x^i (y-\xi(x))^j$ satisfies:\\
\begin{inparaenum}
 \item [(i)] $\vv[f_\epsilon]{C}{d^2}=\min\{i+d^2 j|a_{ij}\ne 0\}\geqslant
 \deg(f_\epsilon)\cdot (d-\epsilon)$,\\
 \item [(ii)] $\partial_+\vv[f_\epsilon]{C}{d^2}=\min\{j|\exists i: a_{ij}\ne 0\ ,i+d^2j=v_1(C,s;f)\}
 \leqslant \deg(f_\epsilon) \cdot \epsilon$. \\
\end{inparaenum}
A similar statement holds for $\partial_-\vv[f]{C}{d^ 2}$.
\end{Cor}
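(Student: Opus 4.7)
The plan is to identify the vertex $(d, 0)$ of the Newton--Okounkov body $\Delta_{C, d^2_+}$, which the preceding corollary identifies with the triangle $\Delta_{d, d, 1/d}$, as a limit of valuative points, and then to convert each such approximation into a polynomial $f_\epsilon$ satisfying (i) and (ii). Since valuative points are dense in the Newton--Okounkov body by Remark~\ref{rem:val}, for every $\epsilon' > 0$ I can find $F \in H^0(\P^2, \O(k)) \setminus \{0\}$ such that $v_+(C, d^2)(F)/k$ lies within Euclidean distance $\epsilon'$ of the vertex $(d, 0)$.

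Next I would pass from $F$ to a polynomial by dehomogenizing. Writing $F = Z^c G$ with $G$ not divisible by the coordinate $Z$ cutting out the line at infinity, the polynomial $f := G|_{Z=1} \in K[x, y]$ has $\deg(f) = k - c$, and because that line misses $O$ one has $v_+(C, d^2)(f) = v_+(C, d^2)(F)$. Consequently the ``natural'' valuative point $v_+(C, d^2)(f)/\deg(f)$ lies on the ray from the origin through $v_+(C, d^2)(F)/k$, but further from the origin by the factor $k/\deg(f) \geqslant 1$.

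The only genuine issue is to control this factor, and this is where I would use that $\hat\mu(C, d^2) = d$, established in the proof of the preceding corollary. By super-additivity of $\mu_{mD}(v_1)$ in $m$, this identity gives the uniform bound $v_1(C, d^2; h) \leqslant d \cdot \deg(h)$ for every nonzero polynomial $h \in K[x,y]$. Applied to $f$, it reads
\[
\deg(f)\ \geqslant\ \frac{v_1(C, d^2; F)}{d}\ >\ \frac{k(d - \epsilon')}{d}\ ,
\]
so that $k/\deg(f) < d/(d - \epsilon')$. Choosing $\epsilon'$ small enough, for instance $\epsilon' := \epsilon d/(d + \epsilon)$, the quotients $v_1(C, d^2; f)/\deg(f) \geqslant v_1(C, d^2; F)/k > d - \epsilon'$ and $\partial_+ v_1(C, d^2; f)/\deg(f) = (k/\deg(f)) \cdot (\partial_+ v_1(C, d^2; F)/k) < \epsilon' d/(d - \epsilon')$ lie within $\epsilon$ of $d$ and $0$ respectively, so that $f_\epsilon := f$ satisfies (i) and (ii). The statement for $\partial_-$ is obtained by the same argument applied to $\Delta_{C, d^2_-} = \Delta_{d, d, -1/d}$ and the flag at $x_-$ in place of $x_+$.
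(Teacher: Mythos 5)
Your argument is correct and is precisely the route the paper intends: the corollary is stated without proof as an immediate consequence of the identification $\Delta_{C,d^2_\pm}=\Delta_{d,d,\pm\frac 1d}$ together with the density of valuative points (Remark \ref{rem:val}) near the vertex $(d,0)$. Your extra care with the normalization factor $k/\deg(f_\epsilon)$ when passing from a section of $\O_{\P^2}(k)$ to its dehomogenization --- controlled via the bound $v_1(C,d^2;h)\leqslant d\cdot\deg(h)$ coming from $\hat\mu(C,d^2)=d$ and superadditivity --- correctly supplies the one detail the paper leaves implicit.
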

By Corollary \ref{stat}, there exists a real number $\lambda >0$ such that
\begin{equation}\label{eq:inc}
\Delta_{\lambda, \lambda,\pm\frac \lambda s}\subset\Delta_{C,s_\pm}.
\end{equation}
This can be seen as an infinitesimal counter-part of Theorem \ref {thm:LK} for $X=\P^2$. When $s=1$ and $X$ is any smooth projective surface, these ideas were also developed in \cite{KL} along with Theorem \ref {thm:LK}. The largest $\lambda$ turned out to be the Seshadri constant of the divisor. This connection can be seen clearly in the following proposition, where the notation comes from \S4.4.
\begin{Pro}\label{prop:nef}
  Let $C\subset\P^2$ be a curve and $s=p/q\geqslant 1$. Let $\alpha\in \Q$ be
such that the $\Q$-divisor $\alpha D-B_s$ is nef. Then
$\Delta_{\frac{s}{\alpha},\frac{s}{\alpha},\frac{1}{\alpha}}\subseteq \Delta_{C,s_+}$.
\end{Pro}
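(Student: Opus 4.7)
My plan is to describe the Zariski decomposition of $D - tA_k$ on the resolution $X_\mathfrak{K}$ through the range $t\in[0,p/\alpha]$, read off the relevant slice of $\Delta_{Y_+}(D)$ via Theorem~\ref{thm:LM} for the flag $(A_k, x_+)$, and then transport everything to $\Delta_{C,s_+}$ using Remarks~\ref{rem:flag2}--\ref{rem:flag3}. The key observation is that the hypothesis ``$\alpha D - B_s$ nef'' scales to nefness of $D - \tfrac{t}{p}B_s$ exactly for $t\in[0,p/\alpha]$; combined with Lemma~\ref{lem:Bproperties}(ii) and with the facts that $B_s - pA_k$ is effective, does not contain $A_k$ (because $\operatorname{ord}_{A_k}(B_s) = p$ by Lemma~\ref{lem:Bproperties}(i)), and has support on the negative-definite chain $A_1,\ldots,A_{k-1}$, uniqueness of Zariski decomposition forces
\[
D - tA_k \;=\; \bigl(D - \tfrac{t}{p}B_s\bigr) + \tfrac{t}{p}(B_s - pA_k)
\]
on this range. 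Then Theorem~\ref{thm:LM} gives that the vertical slice of $\Delta_{Y_+}(D)$ at $t$ has width $\beta(t)-\alpha(t) = P_t\cdot A_k = t/(pq)$, using $D\cdot A_k = 0$ and $B_s\cdot A_k = -1/q$.

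By convexity, the triangle with vertices $(0,0)$, $(p/\alpha,\alpha(p/\alpha))$, $(p/\alpha,\beta(p/\alpha))$ lies in $\Delta_{Y_+}(D)$. Its image in $\Delta_{C,s_+}$ under the linear map of Remark~\ref{rem:flag3} has vertices $(0,0)$, $(s/\alpha,u_1)$, $(s/\alpha,u_2)$ with $u_2 - u_1 = q\cdot 1/(\alpha q) = 1/\alpha$. To pin down the slice $[u_1,u_2]$, I would invoke Corollary~\ref{stat}: since the hypothesis gives $\hat\mu(C,s)\leqslant\alpha$ (evaluate on any effective class via Lemma~\ref{lem:intersection}), the body $\Delta_{C,s_+}$ lies below the line $u = t/s$, so at $t = s/\alpha$ its vertical slice is contained in $[0,1/\alpha]$. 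A segment of length exactly $1/\alpha$ fitting inside an interval of length $1/\alpha$ must coincide with it, so both $(s/\alpha,0)$ and $(s/\alpha,1/\alpha)$ belong to $\Delta_{C,s_+}$; convexity with $(0,0)$ then delivers the triangle.

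The main obstacle lies in the first step: justifying that no additional exceptional component sneaks into the negative part of the Zariski decomposition throughout $[0,p/\alpha]$. The standard uniqueness argument (nefness of the putative positive part plus negative-definiteness of the support of the putative negative part) handles this, but only because the nefness hypothesis scales to exactly the right range and Lemma~\ref{lem:Bproperties}(ii) has already produced a candidate negative part. A pleasant feature of the argument is that it avoids any explicit computation of the lower endpoint $\operatorname{ord}_{x_+}(N_t|_{A_k})$---which would entail delicate bookkeeping on the dual graph near $x_+$---by using Corollary~\ref{stat}'s upper bound to locate the slice from above. The integer case $r=1$ is notationally simpler: $N_t = 0$, $v_+(C,s) = \nu_{Y_+}$, and the conclusion is read off directly from Theorem~\ref{thm:LM}.
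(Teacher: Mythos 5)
Your argument is correct, and its core coincides with the paper's: both identify the Zariski decomposition of $D-\tfrac{p}{\alpha}B_k$ as $\bigl(D-\tfrac{1}{\alpha}B_s\bigr)+\tfrac{1}{\alpha}\bigl(B_s-pB_k\bigr)$ by combining Lemma~\ref{lem:Bproperties}(ii) with the nefness hypothesis, read off the slice length $\tfrac{1}{q\alpha}$ at $t=\tfrac{p}{\alpha}$ from Theorem~\ref{thm:LM}, and transport the result via Remarks~\ref{rem:flag2}--\ref{rem:flag3}. The one step you handle differently is locating the slice. The paper pins down the lower endpoint $\alpha(p/\alpha)=0$ directly on $X_{\mathfrak K}$: since $\alpha D-B_s$ is nef, $D$ is a limit of divisors $\tfrac{1}{\alpha}B_s+H_n$ with $H_n$ ample, effective and avoiding $x_+$, which exhibits $(\tfrac{p}{\alpha},0)$ as a limit of valuative points of $\Delta_{Y_+}(D)$. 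You instead squeeze the length-$\tfrac{1}{\alpha}$ slice against the band $0\leqslant su\leqslant t$ supplied by Corollary~\ref{stat}; note that this containment is unconditional (it is the second inclusion of that corollary), so your intermediate remark that the hypothesis forces $\hat\mu(C,s)\leqslant\alpha$, while true, is not needed. Both variants are valid: yours avoids any bookkeeping at $x_+$, while the paper's stays entirely on $X_{\mathfrak K}$ and does not appeal to the transported upper bound. Extending the Zariski decomposition to all of $[0,p/\alpha]$ is harmless but unnecessary --- only the endpoint matters, and convexity with $(0,0)$ then gives the triangle, exactly as you use it.
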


\begin{proof}
Let's check first that $(\frac{s}{\alpha},0)\in\Delta_{C,s_+}$. By Remark~\ref{rem:flag2}, this is equivalent to showing that $(\frac{p}{\alpha},0)\in\Delta_{Y_+}(D)$. Since $\alpha D-B_s$ is nef, then there exists a sequence of effective ample divisors $H_n, n\geqslant 1$, where $x_+\notin \textup{Supp}(H_n)$, so that $D$ is the limit of $\frac{1}{\alpha}B_s+H_n$. So, the point $(\frac{p}{\alpha},0)$ is contained in $\Delta_{Y_+}(D)$, as $\textup{ord}_{A_k}(B_s)= p$.

By Remark~\ref{rem:flag3}, it remains to show that the height of the slice of $\Delta_{Y_{+}}(D)$ with first coordinate $t=\frac{p}{\alpha}$ is equal to $\frac{1}{q\alpha}$. For this, we apply Theorem \ref{thm:LM} for $t=\frac{p}{\alpha}$. Let $N_t+P_t$ be the Zariski decomposition of $D-tB_k$. By Lemma~\ref{lem:Bproperties}, we know that
\[
N_t \ - \ (1/\alpha)B_s+(p/\alpha)B_k \textup{ is effective.}
\]
Thus, one has $P_t\leqslant D-(1/\alpha)B_s$; but the latter $\Q$-divisor is nef by hypothesis, therefore $N_t=(1/\alpha)(B_s-pB_k)$
and $P_t= D-(1/\alpha)B_s$. In particular, the height of $\Delta_{Y_+}$ at $t=\frac{p}{\alpha}$ is equal to $(1/\alpha)P_t\cdot B_k=(1/\alpha)e_k=1/(q\alpha)$.
\end{proof}
Based on the previous statement, it is natural to introduce the following constant
\[
\lambda (C,s) \ \deq \ \textup{max}\{\lambda > 0 \ | \ \Delta_{\lambda, \lambda,\pm\frac \lambda s}\subset\Delta_{C,s_+}\} \ .
\]
As mentioned before, when $s=1$, the constant $\lambda(C,s)$ is nothing else than the Seshadri constant of $D$, the class of a line, at the origin $O$. So, one expects $\lambda(C,s)$ to encode plenty of geometry also for $s>1$. Note that we have the inequalities
\[
\lambda(C,s) \ \leqslant \ \sqrt{s} \ \leqslant \ \hat\mu(C,s) \ ,
\]
where the left-hand side is an equality if and only if the right-hand side is also such. From Conjecture~\ref{con:nn} this is expected to happen when $s$ is large enough and $C$ a sufficiently generic choice of a curve. Thus it becomes natural to ask about the shape of the convex set $\Delta_{C,s_+}$ when $\hat\mu(C,s)=\lambda(C,s)=\sqrt{s}$ does not happen.
\begin{Cor}\label{Cor:quadrilateral}
Under the assumptions above, one of the following happens: \\
\begin{inparaenum}
\item[(i)] $\hat\mu(C,s)=\lambda(C,s)=\sqrt{s}$, in which case $\Delta_{C,s_+}=\Delta_{\hat\mu, \hat\mu,\frac{\hat\mu}{s}}$, where $\hat\mu=\hat\mu(C,s)$.\\
\item[(ii)] $\hat\mu(C,s)>\sqrt{s}>\lambda(C,s)$, in which case $\lambda(C,s)=s/\hat\mu(C,s)$ and the convex polygon $\Delta_{C,s_+}$ is the quadrilateral $OABE$, where
\[
 O=(0,0), \quad A=(\lambda(C,s),0), \quad B=(\lambda(C,s),\lambda(C,s)/s), \quad E=(\hat\mu(C,s),c)\ ,
\]
for some $c\in[0,\frac{\hat\mu(C,s)}{s}]$. Hence, $\Delta_{C,s_+}$ is a triangle if and only if $c=0$ or $c=\frac{\hat\mu(C,s)}{s}$.
\end{inparaenum}
\end{Cor}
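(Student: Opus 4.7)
The plan is to split into the two announced cases and, for the second, combine a nef-cone calculation with an area constraint. For case (i), if $\hat\mu(C,s)=\sqrt{s}$, then the second inclusion in Corollary \ref{stat} becomes an equality and $\Delta_{C,s_+}=\Delta_{\hat\mu,\hat\mu,\hat\mu/s}$. This triangle coincides with $\Delta_{\lambda,\lambda,\lambda/s}$ for $\lambda=\hat\mu=\sqrt{s}$, which forces $\lambda(C,s)=\hat\mu(C,s)=\sqrt{s}$.

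For case (ii), assume the strict inequalities $\hat\mu>\sqrt{s}>\lambda$. The key step will be the identity $\lambda = s/\hat\mu$, proved by two opposite inequalities. For $\lambda\geqslant s/\hat\mu$, the main point is that the minimal $\alpha$ such that $\alpha D - B_s$ is nef on $X_{\mathfrak K}$ equals $\hat\mu(C,s)$. Indeed, since $D\cdot A_i = 0$ for every exceptional $A_i$ and $B_s\cdot A_i\leqslant 0$ by Lemma \ref{lem:Bproperties}(i), nefness against the exceptional divisors is automatic. For a non-exceptional irreducible $V\subset X_{\mathfrak K}$, Lemma \ref{lem:intersection} gives $B_s\cdot V = v_1(C,s;V)$ and $D\cdot V = \deg V$, so $(\alpha D -B_s)\cdot V\geqslant 0$ amounts to $\alpha\geqslant v_1(C,s;V)/\deg V$. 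Taking the supremum over $V$ recovers $\hat\mu(C,s)$ by its asymptotic characterization as $\sup_V v_1(C,s;V)/\deg V$, using supermultiplicativity of $\mu_{kD}$ together with the decomposition of effective divisors. Hence for every rational $\alpha > \hat\mu$ the $\Q$-divisor $\alpha D - B_s$ is nef, and Proposition \ref{prop:nef} gives $\Delta_{s/\alpha,s/\alpha,1/\alpha}\subseteq \Delta_{C,s_+}$; letting $\alpha\downarrow \hat\mu$ along rationals and using that $\Delta_{C,s_+}$ is closed yields $\Delta_{s/\hat\mu,s/\hat\mu,1/\hat\mu}\subseteq \Delta_{C,s_+}$, hence $\lambda\geqslant s/\hat\mu$.

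For the reverse inequality $\lambda\leqslant s/\hat\mu$, pick any point $E=(\hat\mu,c)$ on the rightmost vertical slice of $\Delta_{C,s_+}$ (which exists because $\hat\mu$ is achieved). Since the polygon is convex, it contains the convex hull $Q$ of $\{O,A,E,B\}$; a shoelace computation yields $\operatorname{area}(Q) = \lambda\hat\mu/(2s)$, independently of the choice of $c$. Since $\operatorname{area}(\Delta_{C,s_+})=1/2$ by Theorem \ref{thm:LM2} and Remark \ref{remark:volume-OK}, we conclude $\lambda\hat\mu \leqslant s$. Combining the two inequalities gives $\lambda = s/\hat\mu$, and therefore $\operatorname{area}(Q) = 1/2 = \operatorname{area}(\Delta_{C,s_+})$. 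Since $Q\subseteq \Delta_{C,s_+}$ and both are convex with equal area, they coincide, so $\Delta_{C,s_+}$ is the quadrilateral $OABE$.

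The main conceptual obstacle in this plan is the identification of the nef threshold with $\hat\mu(C,s)$; once the structural lemmas from Section 4 are invoked properly, the rest is a clean sandwich argument. The degenerate triangle cases come from collinearity: if $c=0$, then $O$, $A$, $E$ all lie on $u=0$, so $A$ is interior to the segment $OE$ and $\Delta_{C,s_+}$ reduces to the triangle $OEB$; symmetrically, if $c=\hat\mu/s$, then $O$, $B$, $E$ are collinear on $u=t/s$ and $\Delta_{C,s_+}$ becomes the triangle $OAE$.
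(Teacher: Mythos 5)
Your proposal is correct and follows essentially the same route as the paper: both establish $\lambda(C,s)\geqslant s/\hat\mu(C,s)$ by showing $\hat\mu(C,s)D-B_s$ is nef (via Lemma \ref{lem:intersection}, Lemma \ref{lem:Bproperties} and Proposition \ref{prop:nef}), and both pin down the polygon by comparing the area of the quadrilateral $OABE$ with the forced area $1/2$ from Theorem \ref{thm:LM2} and Remark \ref{remark:volume-OK}. Your only deviations are cosmetic refinements — approximating $\hat\mu$ by rationals $\alpha>\hat\mu$ before invoking Proposition \ref{prop:nef}, and running the shoelace computation with a general $\lambda$ to extract $\lambda\leqslant s/\hat\mu$ before concluding equality — which if anything make the argument slightly more careful than the paper's.
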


\begin{proof}
  By definition of $\hat\mu(C,s)$ and Lemma~\ref{lem:intersection}, for any effective divisor $Z$ in $X_{\frak K}$, not containing in its support any of the exceptional curves $A_i$, one has
\[
\hat\mu(C,s)\geqslant \frac{v_1(C,s)(Z)}{Z\cdot D}=\frac{Z\cdot B_s}{Z\cdot D} \ .
\]
So, $(\hat\mu(C,s)D -B_s)\cdot Z\geqslant 0$ for any such cycle $Z$. By  Lemma~\ref{lem:Bproperties}, we already know that $(\hat\mu(C,s)D -B_s)\cdot A_i\geqslant 0$ for any $i=1,\ldots, k$. Thus, the divisor $\hat\mu(C,s)D -B_s$ is nef and by Proposition~\ref{prop:nef} we get that $\lambda(C,s)\geqslant s/\hat \mu(C,s)$. When equality happens then we land in case $(1)$. Otherwise, if $(\hat \mu(C,s),c) \in \Delta_{C,s_+}$ for some $c\geqslant 0$, then this latter condition implies that $\Delta_{C,s_+}$ contains the convex hull of the points
\[
(0,0), \quad \left(\frac{s}{\hat\mu(C,s)},0\right),
\quad \left(\frac{s}{\hat\mu(C,s)},\frac{1}{\hat\mu(C,s)}\right), \quad
(\hat\mu(C,s),c) \ .
\]
Note that such a $c$ must exist, since the projection of $\Delta_{C,s_+}$ to the first axis is
$[0,\hat\mu(C,s)]$ as noted before. Since the area of this convex hull is $1/2$, it coincides with
$\Delta_{C,s_+}$, and one has $\lambda(C,s)= s/\hat \mu(C,s)$.
\end{proof}
\begin{Rem}
It is worth to note that Corollary~\ref{Cor:quadrilateral} takes place only because our ambient space is $\P^2$, especially due to properties like the Picard group of $\P^2$ is generated by a single class, whose associated line bundle is globally generated with self-intersection equal to $1$. One does not expect these phenomena to happen when we consider the valuations $v_{\pm}$ on any smooth projective surface $X$. But we do expect that some parts of the considerations about the infinitesimal picture developed in \cite{KL} to be true in this more general setup. For example, the constant $\lambda(C,s)$ should in some ways encode many interesting local positivity properties of the divisor class we are studying, as partially seen in Proposition~\ref{prop:nef}.
\end{Rem}

\begin{Exa}\label{616-2}
 Continuing with $s=48/7$ as in Example \ref{616}, and assuming $O$ is
 a general point of a curve $C$ of degree $d\geqslant 3$, we know from
 \cite[Theorem~C]{DHKRS} that $\hat\mu(C,48/7)=(1+48/7)/3=55/21$ and there is a
 unique curve $V$ with $v_1(C,48/7;V)/\deg V=55/21$, namely
 the unique cubic nodal at $O$ which has one of the branches $\gamma$
 at the node satisfying $(C,\gamma)_O=7$. Indeed,
 $V$ has multiplicity 2 at $O$, and (its strict transform) multiplicity $1$ at
 each center $p_2, \dots, p_7$, whereas it does not pass through any
 of the remaining centers $p_8, \dots, p_{13}$. So by~\eqref{eq:proximity}
 one has
 $v_1(C,48/7;V)=2+5+6/7=55/7$, which divided by $\deg (V)=3$ gives
 $\hat\mu(C,48/7)=55/21$.
 The Newton polygon of $V$ with respect to $C$ has three vertices, namely
 $(0,2)$, $(1,1)$ and $(8,0)$,
 showing that $v_1(C,s;V)=1+s$ for $s<7$, and so
 $v_+(C,48/7)(V)=(55/7,1)$. Therefore, the rightmost point of $\Delta_{C,48/7_+}$
 is the valuative point $(55/21,1/3)$.  Alternatively, $v_+(C,48/7)(V)$ can be computed
 from the pullback of $V$ to $X_{\frak K}$, which is
\begin{multline*}
 V^*=\tilde V+2B_1+B_2+\dots+B_7=\\
 A + 2A_1+3A_2+4A_3+5A_4+6A_5+7A_6+8A_7+\\
 15A_8+22A_9+31A_{10}+39A_{11}+47A_{12}+55A_{13}.
\end{multline*}
 Therefore the flag valuation applied to $V$ is
 $\nu_{Y_+}(V)=(55,8)$ (recall that one has \mbox{$p_+=A_7\cap A_{13}$})
 and the computation from Remark \ref{rem:flag2} gives
 \[v_+(C,48/7)(V)= \begin{pmatrix}
 \frac 1 q & 0 \\ -q' &  q
\end{pmatrix} \nu_{Y_+}(V)=
 \begin{pmatrix}
 \frac 1 7 & 0 \\ -1 &  7
\end{pmatrix}
\begin{pmatrix}
 55 \\ 8
\end{pmatrix}=
\begin{pmatrix}
 \frac{55}7 \\ 1
\end{pmatrix},
 \]
 consistent with the computation using the Newton polygon.
By Corollary \ref{Cor:quadrilateral}, the remaining vertices of $\Delta_{C,48/7_+}$
are $(0,0)$, $(144/55,0)$ and $(144/55,21/55)$.

The same computation applies to any $s\in (\phi^4,7)$, giving quadrilateral
bodies $\Delta_{C,s_+}$ with vertices
\[ (0,0), \quad \left(3-\frac{3}{s+1},0\right),
\quad \left(3-\frac{3}{s+1},\frac{3}{s+1}\right), \quad
\left(\frac{s+1}{3},\frac{1}{3}\right).\]
We leave the easy details to the reader.
\begin{figure}
\begin{centering}
\begin{tikzpicture}[scale=3,auto,label distance=0pt]
% \draw[step=.1,help lines] (0,0) grid (3,.5);
  \draw[thick,fill=gray!50!white] (0,0) --
  (144/55,0) --   (55/21,1/3) node (V) {} --
  (144/55,21/55) node [label=above:{$\left(\frac{144}{55},\frac{21}{55}\right)$}] {} -- cycle;
  \draw[<-] (V) to ++(1/9,0) node [label=right:{$\left(\frac{55}{21},\frac{1}{3}\right)$}]  {};
  \draw (1/4,1/4) node {$\Delta_{C,48/7_+}$};
  \draw[thick,fill=gray!50!white] (0,-1) --
  (21/8,-1) --   (8/3,1/3-1) node (V) {} --
    (21/8,3/8-1)  node [label=above:{$\left(\frac{21}{8},\frac{3}{8}\right)$}] {} -- cycle;
  \draw[<-] (V) to ++(1/8,0) node [label=right:{$\left(\frac{8}{3},\frac{1}{3}\right)$}]  {};
  \draw (1/4,1/4-1) node {$\displaystyle\lim_{s \to 7^-}\Delta_{C,s_+}$};
  \draw[thick,fill=gray!50!white] (0,-2) --
  (21/8,-2) --   (8/3,-2) node [label=right:{$\left(\frac{8}{3},0\right)$}]  {} --
    (21/8,3/8-2)  node [label=above:{$\left(\frac{21}{8},\frac{3}{8}\right)$}] {} -- cycle;
%  \draw[<-] (V) to ++(1/8,0) node ;
  \draw (1/4,1/4-2) node {$\Delta_{C,7_+}$};
\end{tikzpicture}
%\medskip 
%
%\begin{tikzpicture}[scale=3,auto]
%% \draw[step=.1,help lines] (0,0) grid (3,.5);
%  \draw[thick,fill=gray!50!white] (0,0) --
%  (21/8,0) --   (8/3,1/3) node (V) {} -- 
%    (21/8,3/8)  node [label=above:{$\left(\frac{21}{8},\frac{3}{8}\right)$}] {} -- cycle;
%  \draw[<-] (V) to ++(1/8,0) node [label=right:{$\left(\frac{8}{3},\frac{1}{3}\right)$}]  {};
%  \draw (1/4,1/4) node {$\displaystyle\lim_{s \to 7^-}\Delta_{C,s_+}$};
%\end{tikzpicture} 
%\medskip 
%
%\begin{tikzpicture}[scale=3,auto]
%% \draw[step=.1,help lines] (0,0) grid (3,.5);
%  \draw[thick,fill=gray!50!white] (0,0) --
%  (21/8,0) --   (8/3,0) node [label=right:{$\left(\frac{8}{3},0\right)$}]  {} -- 
%    (21/8,3/8)  node [label=above:{$\left(\frac{21}{8},\frac{3}{8}\right)$}] {} -- cycle;
%%  \draw[<-] (V) to ++(1/8,0) node ;
%  \draw (1/4,1/4) node {$\displaystyle\lim_{s \to 7^-}\Delta_{C,s_+}$};
%\end{tikzpicture} 
\end{centering}
\caption{\small{ The \NO body computed in Example \ref{616-2}, on top.
The difference $55/21-144/55=1/1155$ is so small that the quadrilateral
looks like a triangle. As $s$ grows in the interval $(\phi^4,7)$, the vertex
$(\frac{s+1}3,\frac13)$ sticks out further right from the two vertices
with first coordinate $3-\frac{3}{s+1}$, so that at the limit, the quadrilateral
nature is clearly seen. At $s=7$ the quadrilateral mutates into a triangle,
shown in the bottom picture.}}
\end{figure}
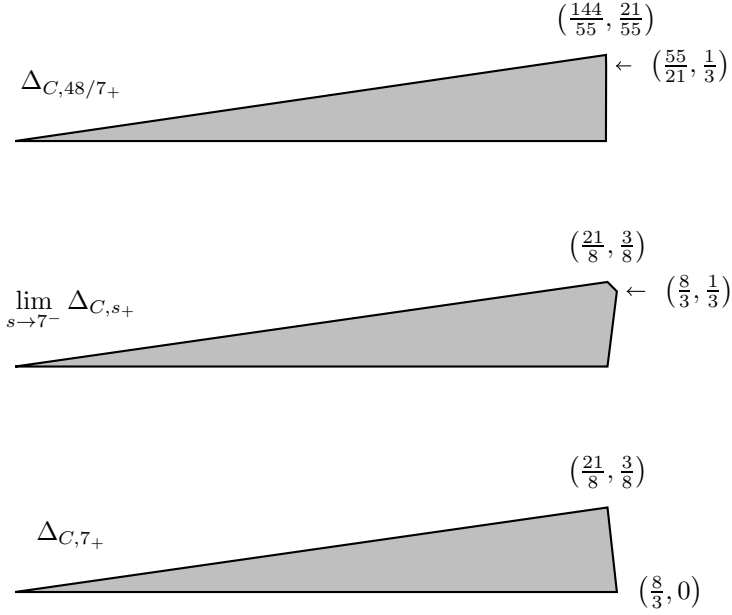

\end{Exa}

\subsection{Large $s$ on curves of fixed degree}
As we have seen in Corollary~\ref{Cor:quadrilateral}, the convex set $\Delta_{C,s_+}$ can be either a rectangular triangle or a quadrilateral, for any rational number $s\geqslant 1$ and any degree $d=\deg(C)$. Interestingly enough, when $s\gg d$ one can say more. More precisely, we have the following theorem:
\begin{Teo}\label{thm:larges}
 Let $C\subset \P^2$ be a plane curve of degree $d$ and $D\subseteq \P^2$ a line. Then $\Delta_{C,s_+}=\Delta_{d,\frac sd,\frac 1d}$ for any $s\geqslant d^2$. In particular, one has
 \begin{equation}\label{eq:stab}
 \Delta_{(C,O)}(D) \ = \ \Delta_{\frac{1}{d},d} \ = \
\begin{pmatrix}
 0 & 1 \\ 1 & -s
\end{pmatrix}\Delta_{C,s_+}
 \end{equation}
\end{Teo}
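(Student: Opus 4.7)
My plan is to prove the first equality by combining a volume identity with a pointwise linear inequality, and then deduce the second by a direct change-of-coordinates check. Both $\Delta_{C,s_+}$ and $\Delta_{d,s/d,1/d}$ have Euclidean area $\tfrac{1}{2}$: the first from Theorem~\ref{thm:LM2} together with Remark~\ref{remark:volume-OK}, and the second by direct computation on a triangle of base $d$ and height $1/d$. It therefore suffices to establish one inclusion, and I aim for the upper bound $\Delta_{C,s_+}\subseteq\Delta_{d,s/d,1/d}$. The target triangle is the intersection of the half-planes $u\geqslant 0$, $t-su\geqslant 0$, and $t+(d^{2}-s)u\leqslant d$; the first two already appear in Corollary~\ref{stat}, so the substance of the inclusion is the third inequality, which I would verify at every valuative point and extend to $\Delta_{C,s_+}$ by density (Remark~\ref{rem:val}).

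Fix $f\in H^0(\P^{2},\mathcal O(k))\setminus\{0\}$ and let $(i^{*},j^{*})=\mathfrak v_{\ell}\in \mathfrak V(C,f)$ be the vertex of $\NP(C,f)$ computing the valuation $v_+(C,s)(f)=(i^{*}+sj^{*},j^{*})$ in the sense of Proposition~\ref{deriv_val}. A direct substitution yields
\[ t+(d^{2}-s)u \equ \frac{i^{*}+sj^{*}}{k}+(d^{2}-s)\frac{j^{*}}{k} \equ \frac{i^{*}+d^{2}j^{*}}{k}\ , \]
so the claim reduces to the arithmetic inequality $i^{*}+d^{2}j^{*}\leqslant kd$. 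I would first prove this at the rightmost vertex $\mathfrak v_h=(i_h,j_h)$: using the identifications $j_h=\ord_C V$ and $i_h=((V-j_hC),C)_O$ recalled just after Proposition~\ref{lem:trop}, and noting that $V-j_hC$ is an effective divisor on $\P^{2}$ of degree $k-j_hd$ not containing $C$, Bezout's theorem yields
\[ i_h\,\leqslant\, (V-j_hC)\cdot C \equ (k-j_hd)\cdot d \equ kd-j_hd^{2}\ , \]
so $i_h+d^{2}j_h\leqslant kd$.

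The propagation of this bound from $\mathfrak v_h$ to the actual minimizing vertex $\mathfrak v_\ell$ is the main step, and it is driven entirely by the convexity of $\NP(C,f)$. Writing $n_k=i_k-i_{k-1}$, $m_k=j_{k-1}-j_k$ and $s_k=n_k/m_k$ for the widths, heights and associated critical slopes of the edges of $\NP(C,f)$, the telescoping identities $i_\ell=i_h-\sum_{k>\ell}n_k$ and $j_\ell=j_h+\sum_{k>\ell}m_k$ rearrange into
\[ i_\ell + d^{2}j_\ell \equ i_h+d^{2}j_h + \sum_{k>\ell}m_k\bigl(d^{2}-s_k\bigr)\ . \]
Since $\mathfrak v_\ell$ is the vertex computing $v_+(C,s)(f)$, the characterization of $v_+$ in Proposition~\ref{deriv_val} places $s$ in the range $[s_\ell,s_{\ell+1})$, so $s_{\ell+1}>s\geqslant d^{2}$ (the sum being vacuous if $\ell=h$); the convexity ordering $s_{\ell+1}<s_{\ell+2}<\dots<s_h$ then gives $s_k>d^{2}$ for every $k>\ell$, making each summand $m_k(d^{2}-s_k)$ non-positive. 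Therefore $i_\ell+d^{2}j_\ell\leqslant i_h+d^{2}j_h\leqslant kd$, which combined with density and the matching of areas concludes the proof of the first equality.

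The second displayed equation in the statement is then a direct verification: by Example~\ref{ex:p} one has $\Delta_{(C,O)}(D)=\Delta_{1/d,d}$, and the linear map $(t,u)\mapsto(u,t-su)$ represented by the matrix $\begin{pmatrix}0&1\\1&-s\end{pmatrix}$ carries the three vertices $(0,0)$, $(d,0)$, $(s/d,1/d)$ of $\Delta_{d,s/d,1/d}$ to $(0,0)$, $(0,d)$, $(1/d,0)$, which are exactly the vertices of $\Delta_{1/d,d}$.
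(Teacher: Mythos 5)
Your proof is correct, but it establishes the \emph{opposite} inclusion from the one the paper proves. The paper shows $\Delta_{d,\frac sd,\frac 1d}\subseteq\Delta_{C,s_+}$ by exhibiting valuative points at (or converging to) the three vertices: $(0,0)$ and $(\frac sd,\frac 1d)$ are immediate, and $(d,0)$ is reached by taking the sections $f_\epsilon$ of Corollary~\ref{vertex_approach} at $s=d^2$ and using the monotonicity and concavity of $s\mapsto v_1(C,s;f_\epsilon)$ to push the corresponding valuative points to the $t$-axis at abscissa $\geqslant d$ for all $s\geqslant d^2$. You instead prove $\Delta_{C,s_+}\subseteq\Delta_{d,\frac sd,\frac 1d}$ by bounding \emph{every} valuative point: the reduction to $i^*+d^2j^*\leqslant kd$ is clean, the bound at the rightmost vertex $\mathfrak v_h$ via B\'ezout is correct (note it uses that $V-j_hC$ is effective of degree $k-j_hd$, so it tacitly needs $d$ to be the degree of the irreducible component of $C$ through $O$ --- an assumption the theorem itself requires, since the valuation only sees that component), and the propagation to the computing vertex $\mathfrak v_\ell$ via the telescoping identity and the ordering $s_k\geqslant s_{\ell+1}>s\geqslant d^2$ for $k>\ell$ is exactly where the hypothesis $s\geqslant d^2$ enters. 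Both arguments then finish identically with the area-$\frac12$ comparison, and your verification of \eqref{eq:stab} matches the paper's. What your route buys is self-containedness --- it bypasses Corollary~\ref{vertex_approach} (stated without proof in the paper) and the preceding corollary on $s=d^2$, replacing an existence statement for nearly-extremal sections by a uniform upper bound on all sections; what the paper's route buys is the explicit identification of which sections are responsible for the vertex $(d,0)$.
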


\begin{proof}
First, the point $(0,0)$ belongs to $\Delta_{C,s_+}$, as it is given by the valuation of any polynomial not vanishing at $O$. Also, $(\frac sd,\frac 1 d)\in\Delta_{C,s_+}$, being the valuation of an equation of $C$, as we saw in the proof of Proposition \ref {deriv_val} (see also Remark \ref {rem:flag2}). It remains to show that $(d,0)\in\Delta_{C,s_+}$. This will be enough, since we will have the containment $\Delta_{d,\frac sd,\frac 1d}\subseteq \Delta_{C,s_+}$ and the two coincide because both have area $1/2$.

For the latter condition, fix any $\epsilon>0$ and let $f_\epsilon$ be the polynomial as in Corollary~\ref{vertex_approach}, satisfying~(i) and~(ii). Consider the function $s\mapsto v_{+}(C,s)(f_{\epsilon})\in\R^2$ for~\mbox{$s\geqslant d^2$}.  By Proposition~\ref{lem:trop}, the first coordinate of this function is non-decreasing and concave. Thus, the second coordinate is decreasing by concavity of the first. Now, these two remarks, together with the properties we know for $v_{+}(C,s)(f_{\epsilon})$ when $s=d^2$ from Corollary~\ref{vertex_approach}, imply that the limit
\[
\lim_{\epsilon\rightarrow\infty} \frac{v_+(C,s)(f_{\epsilon})}{\deg(f_{\epsilon})} \ = \ (t_s,0), \textup{ for any } s\geqslant d^2 \ ,
\]
where $t_s\geqslant d$. In particular, $(t_s,0)\in\Delta_{C,s_+}$ and since the origin is contained in this convex set, then this implies that $(d,0)\in\Delta_{C,s_+}$.

By Example \ref {ex:p} we have $\Delta_{(C,O)}(D)=\Delta_{\frac 1d,d}$, thus
the final assertion follows. \end{proof}

\begin{Rem}
 With similar arguments as in the proof of Theorem \ref {thm:larges}, one proves that $\Delta_{C,s_-}=\Delta_{d,\frac sd,-\frac 1d}$ and
  \[
 \Delta_{(C,O)}=
\begin{pmatrix}
 0 & -1 \\ 1 & s
\end{pmatrix}\Delta_{C,s_-}
 \]
\end{Rem}

\begin{Cor}\label{cor:conj} If $C$ is a plane curve of degree $d$, then
\[
\hat \mu(C,s)=\frac sd ,\quad \forall s\geqslant d^ 2 \ .
\]
Hence $v_1(C,s)$ is minimal (resp. not minimal) for $s=d^ 2$ (resp. for $s>d^2$) (see~\S3.4 for definition).
\end{Cor}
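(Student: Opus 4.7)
The plan is to read off $\hat\mu(C,s)$ directly from the \NO body $\Delta_{C,s_+}$, using the shape computed in Theorem \ref{thm:larges}. Recall from \S\ref{ssec:mu} that for a rank $2$ valuation $v=(v_1,v_2)$ centred at a point $x \notin {\rm Neg}(D)$, the projection of $\Delta_v(D)$ onto the $t$-axis is precisely the interval $[0,\hat\mu_D(v_1)]$. In our situation, the origin $(0,0)$ does lie in $\Delta_{C,s_+}$ (by Corollary \ref{stat}, or simply because $O\notin{\rm Neg}(D)$ for $D$ a line), so this applies.

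By Theorem \ref{thm:larges}, for $s \geqslant d^2$ one has
\[
\Delta_{C,s_+}\ =\ \Delta_{d,\tfrac{s}{d},\tfrac{1}{d}}\ ,
\]
i.e.\ the triangle with vertices $(0,0)$, $(d,0)$ and $(s/d,1/d)$. Since $s\geqslant d^2$ implies $s/d\geqslant d$, the projection onto the first coordinate is the interval $[0,s/d]$. Therefore
\[
\hat\mu(C,s)\ =\ \frac{s}{d}\quad\text{for all } s\geqslant d^2,
\]
which is the first claim.

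For the second claim, recall that $v_1(C,s)$ is minimal by definition when $\hat\mu(C,s)=\sqrt{s}$. At $s=d^2$ we get $\hat\mu(C,d^2)=d=\sqrt{d^2}$, so $v_1(C,d^2)$ is minimal. For $s>d^2$, the elementary inequality $s/d>\sqrt{s}$ (equivalent to $s>d^2$) gives $\hat\mu(C,s)=s/d>\sqrt{s}$, so $v_1(C,s)$ fails to be minimal. No real obstacle arises here: the whole argument is a direct extraction from Theorem \ref{thm:larges} together with the general fact that the first coordinate of \NO bodies attached to rank $2$ valuations sees $\hat\mu$ as its supremum.
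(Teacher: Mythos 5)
Your proof is correct and follows the same route the paper intends: Corollary \ref{cor:conj} is meant to be read off directly from Theorem \ref{thm:larges}, using the fact (stated in \S\ref{ssec:mu}, via Theorem \ref{thm:LK} and the ampleness of a line on $\P^2$) that the projection of $\Delta_{C,s_+}$ to the $t$-axis is exactly $[0,\hat\mu(C,s)]$, so that the rightmost vertex $(s/d,1/d)$ of the triangle $\Delta_{d,\frac sd,\frac 1d}$ yields $\hat\mu(C,s)=s/d$ for $s\geqslant d^2$. The minimality discussion via $s/d\geqslant\sqrt{s}\iff s\geqslant d^2$ is likewise exactly what is needed.
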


\begin{Rem} Corollary \ref {cor:conj} seems to be in contrast with Conjecture \ref{con:nn}, which in reality is not the case. Conjecture \ref{con:nn} applies only for a sufficiently general choice of $C$. Given $s$, this requires the degree of $C$ to be large enough with respect to $s$. In other words, if $s\geqslant d^ 2$ then $C$ is not sufficiently general.
\end{Rem}

\begin{Rem} Lemma~8 from \cite{Roe} implies that the right-hand side of \eqref {eq:stab} \emph{converges} to the left-hand side for $s\to +\infty$. So, Theorem \ref {thm:larges} makes this statement more precise, i.e., in fact the two bodies are equal for $s\geqslant d^ 2$.\end{Rem}

\subsection{Mutations and supraminimal curves}
\label{sec:supraminimal}
By fixing  $C$, the goal of this section is to study $\Delta_{C,s_+}$ as a \emph{function} of $s\in [1,+\infty)$, i.e. by walking along an arc of $\mathcal {QM}$ away from the root.  By Theorem \ref {thm:larges}, the picture is well understood for $s\geqslant d^ 2$, so it remains to study the case when $s\in (1,d^ 2)$. Note that the origin $(0,0)$ is always a vertex of $\Delta_{C,s_+}$. The remaining vertices of $\Delta_{C,s_+}$ will be called \emph{proper} and their behaviour is the focus of this subsection.

\begin{Def}\label{def:mut} We say that $\Delta_{C,s_+}$ is \textit{continuous} at $s_0\in (1,d^ 2)$ if, for every $\epsilon>0$ there exists a $\delta>0$ such that for all
$s$ with $|s-s_0|<\delta$, every vertex $p$ of $\Delta_{C,s_+}$ is near the
boundary of $\Delta_{C,s_{0+}}$, i.e. $\textup{distance}(p, \partial \Delta_{C,s_{0+}})<\epsilon$.

 If $\Delta_{C,s_+}$ is not continuous for some $s_0\in (1,d^ 2)$, then we say that $\Delta_{C,s_+}$ presents a \emph{mutation} at $s_0$ (or \emph{mutates} at $s_0$). Also, $\Delta_{C,s_+}$ \emph{depends linearly on $s$} in an interval $I\subseteq(1,d^ 2)$, if the number of proper vertices of $\Delta_{C,s_+}$ is the same for all $s\in I$ and the coordinates of the vertices of $\Delta_{C,s_+}$ are affine functions of $s$ in $I$.
\end{Def}

%Note that mutations of $\Delta_{C,s_+}$ may occur only at rational points of $(1,d^ 2)$ (see Theorem \ref {thm:LM}, Remark \ref  {rem:disc} and \cite [Remark 1.5]{KL}). 
%\todo{It is not clear to me whether this holds in general or only on $\P^2$}
As we will see, a standard reason for mutation,
taking place between intervals of linearity of $\Delta_{C,s_+}$,
is non-minimality of $v_1(C,s)$.
Moreover mutations  may behave differently according to whether $O$ is sufficiently general on $C$ or not.

\begin{Def} We say that an irreducible curve $V$ containing $O$, with equation $f=0$, \emph{computes} $\hat\mu(C,s)$ via $v_1(C,s)$ if $v_1(C,s;f)=\deg (f) \cdot  \hat \mu(C,s)$.

If $v_1(C,s)$ is non-minimal, there exists $V$ computing $\hat \mu(C,s)$ via $v_1(C,s)$. Hence
$$v_1(C,s;f)=\deg (f) \cdot  \hat \mu(C,s)>\deg (f) \cdot\sqrt s$$
(see \cite [Lemma 5.1]{DHKRS}). Such  curves are called \emph{supraminimal} for $v_1(C,s)$.
\end{Def}

\begin{Rem}\label{rem:acc} The proof of \cite [Lemma 5.1] {DHKRS} shows that if $V$ computes $\hat \mu(C,s)$, in particular if it is supraminimal for $v_1(C,s)$, then there is no (other) supraminimal curve at $s$, and that if $V$ and $W$ both compute $\hat \mu(C,s)$ and are distinct, then $(V,W)_0=\deg(V)\cdot \deg(W)$.

If $V$, with equation $f=0$, computes $\hat \mu(C,s)$, then the valuative points of $v_+(C,s)$ corresponding to $f$
(i.e., the one-sided limits of $v_+(C,s)(f)$ with respect to $s$)
are  \emph{rightmost} vertices of $\Delta_{C,s_+}$. Note that there are two such vertices, if $\partial v_1(C,\,\cdot\,;f)$ is discontinuous at $s$, in which case there is a mutation of $\Delta_{C,s_+}$ at $s$, or there is only one such vertex.
\end{Rem}

\begin{Exa}
Corollary \ref {cor:conj} tells us that $C$  of degree $d$ is supraminimal for all $v_1(C,s)$ with $s>d^2$: we consider this as a \emph{trivial} case of supraminimality.

There is no supraminimal curve for the $O$-adic valuation  $v_1(C,1)$. So, given $C$, non-trivial supraminimal curves for $v_1(C,s)$ may occur only for $s\in (1,d^ 2)$.

\end{Exa}

\begin{Teo}\label{thm:bb} Let $C$ be a smooth curve through $O$ and let $V$ be a curve, different from $C$, with equation $f=0$. Then:\\
\begin{inparaenum}
\item [(i)] the set of points $s\in [1,+\infty)$ such that $V$ is supraminimal for $v_1(C,s)$ is open; \\
\item [(ii)] if $V$ is supraminimal for $v_1(C,s)$ for all $s\in(a,b)$ but not at $a$ and $b$, then
\[
\Delta_{C,a_+}=\Delta_{\sqrt{a},\sqrt{a},1/\sqrt{a}} \ \ \Delta_{C,b_+}=\Delta_{\sqrt{b},\sqrt{b},1/\sqrt{b}} \ ,
\]
and there is some point $\sigma\in (a,b)$ of discontinuity of the derivative of $v_1(C,s)$. Furthermore, there is some branch $\gamma\in \mathcal B(V)$ with Puiseux expansion starting as $y=ax^\sigma+\ldots$, such that $-\frac 1 \sigma=\slo(\mathfrak l)$ with $\mathfrak l=\varphi_V(\gamma)\in \mathfrak E(f)$; \\
\item  [(iii)] $\Delta_{C,s_+}$ mutates at the finitely many points $\sigma\in (a,b)$ as in (ii).
\end{inparaenum}
\end{Teo}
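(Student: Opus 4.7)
The plan is to prove the three parts in order, leveraging the continuity and piecewise linearity of $v_1(C,\cdot;f)$ (Proposition~\ref{lem:trop}), the continuity of $\hat\mu(C,\cdot)$ along arcs of $\mathcal {QM}$, the strict concavity of the square root, and the uniqueness of supraminimal curves (Remark~\ref{rem:acc}). Write $d=\deg V$. For part~(i), supraminimality of $V$ at $s_0$ amounts to $v_1(C,s_0;f)/d>\sqrt{s_0}$ together with $v_1(C,s_0;f)/d=\hat\mu(C,s_0)$; the first condition is open by continuity, and for the second I would use Remark~\ref{rem:acc} (uniqueness of the supraminimal curve at $s_0$) together with a compactness argument controlling competitor curves near $s_0$---one needs to rule out that arbitrarily many curves of growing degree could overtake $V$ arbitrarily close to $s_0$, which follows from the continuity of $\hat\mu$ and the fact that its value at $s_0$ is attained by $V$ alone.

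For part~(ii), I would first exploit the failure of supraminimality at the endpoints: combining the general inequality $v_1(C,a;f)/d\leqslant \hat\mu(C,a)$ with the fact that $V$ computes $\hat\mu$ on $(a,b)$, continuity of both sides forces $v_1(C,a;f)/d=\hat\mu(C,a)$; but then $V$ not being supraminimal at $a$ requires $\hat\mu(C,a)\leqslant \sqrt{a}$, which combined with $\hat\mu\geqslant \sqrt{\cdot}$ gives $\hat\mu(C,a)=\sqrt{a}$. Hence $v_1(C,a)$ is minimal, and Corollary~\ref{Cor:quadrilateral}(i) identifies $\Delta_{C,a_+}$ with the claimed triangle; the argument at $b$ is identical. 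For the existence of $\sigma$, I would argue by contradiction: if $v_1(C,\cdot;f)/d$ were linear (equivalently, differentiable) on all of $[a,b]$, its graph would coincide with the chord joining $(a,\sqrt{a})$ and $(b,\sqrt{b})$, which, by strict concavity of $\sqrt{\cdot}$, lies strictly below the graph of $\sqrt{s}$ on $(a,b)$---contradicting $v_1(C,s;f)/d>\sqrt{s}$ for $s\in(a,b)$. Hence $v_1(C,\cdot;f)$ has at least one non-differentiability point $\sigma\in (a,b)$, and by Proposition~\ref{lem:trop}(ii) one has $\sigma=-1/\slo(\mathfrak l)$ for some edge $\mathfrak l\in\mathfrak E(C,f)$; the Newton--Puiseux dictionary of \S\ref{ssec:NP} then produces a branch $\gamma\in\mathcal B(V)$ with $\mathfrak l=\varphi_V(\gamma)$ and Puiseux expansion starting as $y=\xi(x)+ax^\sigma+\cdots$.

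For part~(iii), finiteness of the mutation set comes from Proposition~\ref{lem:trop}: the breakpoints of $v_1(C,\cdot;f)$ are in bijection with the finitely many edges of $\mathfrak E(C,f)$. For each such $\sigma\in(a,b)$, by the supraminimality of $V$ and Remark~\ref{rem:acc}, the valuative point associated to $f$, namely
\[
\frac{v_+(C,s;f)}{d} \,=\, \bigl(\hat\mu(C,s),\,\partial_+v_1(C,s;f)/d\bigr),
\]
is a rightmost vertex of $\Delta_{C,s_+}$ throughout $(a,b)$. Its first coordinate varies continuously in $s$, but at $\sigma$ the derivative jumps, $\partial_-v_1(C,\sigma;f)>\partial_+v_1(C,\sigma;f)$, producing a discontinuous change in the second coordinate of that vertex as $s$ crosses $\sigma$. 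This is precisely a mutation of $\Delta_{C,s_+}$ at $\sigma$. The hard part will be~(i): naive continuity of $\hat\mu$ only yields closedness of the equality $v_1(C,\cdot;f)/d=\hat\mu$, so openness genuinely requires the uniqueness of the supraminimal curve together with a boundedness/compactness estimate on competitors, and this is where deeper input from~\cite{DHKRS} is indispensable. By contrast, parts~(ii) and~(iii) reduce to structural consequences of the concavity/piecewise-linear picture and the Newton--Puiseux dictionary.
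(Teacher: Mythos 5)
Your argument for parts (ii) and (iii) is essentially the paper's own: the paper merely asserts (``certainly'') the existence of the breakpoint $\sigma$, and your chord argument via strict concavity of $\sqrt{\cdot}$ is exactly the intended justification; the mutation in (iii) is read off from Remark~\ref{rem:acc} in both treatments, and your derivation of $\hat\mu(C,a)=\sqrt a$ in (ii) (continuity forces $V$ to compute $\hat\mu$ at $a$, and non-supraminimality there forces $\hat\mu(C,a)\leqslant\sqrt a$) is a harmless variant of the paper's (which instead uses (i) plus uniqueness to rule out \emph{any} supraminimal curve at $a$).

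The one soft spot is part (i), which you misdiagnose as the hard part. Supraminimality of $V$ at $s$ reduces to the single condition $v_1(C,s;f)>\deg(f)\cdot\sqrt{s}$: by \cite[Lemma 5.1]{DHKRS} (the source of Remark~\ref{rem:acc}), an irreducible curve satisfying this strict inequality at a given $s$ is automatically the unique such curve and computes $\hat\mu(C,s)$ \emph{at that same} $s$, so the ``computes $\hat\mu$'' clause is not an extra open condition to be checked. Hence openness follows from continuity of $v_1(C,\cdot\,;f)$ and of $\sqrt{\cdot}$ alone, which is all the paper says, and no compactness estimate on competitors is needed. As literally written, your inference ``continuity of $\hat\mu$ plus uniqueness at $s_0$ rules out competitors near $s_0$'' does not go through: nothing in it excludes a different curve $W_s$ computing $\hat\mu(C,s)$ for each $s\neq s_0$. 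The repair is simply to apply the uniqueness/B\'ezout lemma pointwise at each nearby $s$, where the strict inequality persists by continuity; with that correction the whole proof is complete and coincides with the paper's.
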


\begin{proof} Part (i) follows from the continuity of $v_1(C;f)(s)$ and $\sqrt s$.\medskip

Let us prove~(ii). There is no supraminimal curve for $v_1(C,a)$ and $v_1(C,b)$.
Indeed, suppose $W$ is supraminimal for $v_1(C,a)$ (the same argument works for  $v_1(C,b)$). Then, by part (i), $W$ is supraminimal for  $v_1(C,s)$ with $s$ in a neighborood of $a$. But $V$ is also supraminimal for  $v_1(C,s)$ with $s$ in a right neighborood of $a$. By the uniqueness of supraminimal curves, we have $V=W$, against the assumption that $V$ is not supraminimal for $v_1(C,a)$. Thus $\hat \mu(C,s)=\sqrt s$ for $s=a,b$. Since $v_1(C;f)(s)>\deg(f)\cdot \sqrt s$ for $s\in (a,b)$ and $v_1(C;f)(s)$, as a function of $s\in [1,+\infty)$, is a tropical polynomial (see Proposition \ref {lem:trop}), certainly there is some point $\sigma\in (a,b)$ of discontinuity for its derivative. Moreover $v_1(C;f)(s)=\deg(f)\cdot \sqrt s$ for $s=a,b$.
The rest of (ii) follows from the discussion in \S\ref {ssec:NP}. \medskip

To show~(iii) note that the mutation of $\Delta_{C,s_+}$ at the points $s\in (a,b)$ as in (ii) depends on the discontinuity of  $\partial v_1(C;f)$ there (see Remark \ref {rem:acc}).
\end{proof}

\begin{Pro}
  If $\hat\mu(C,s_0)=\sqrt {s_0}$ then $\Delta_{C,s_+}$ is continuous at $s_0$.
\end{Pro}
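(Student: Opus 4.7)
The plan is to reduce everything to Corollary \ref{Cor:quadrilateral} combined with continuity of $\hat\mu(C,\cdot)$ along the arc, as stated in \S\ref{ssec:mu} (cf.\ \cite[Proposition 3.9]{DHKRS}). Under the hypothesis $\hat\mu(C,s_0)=\sqrt{s_0}$, case (i) of Corollary \ref{Cor:quadrilateral} applies at $s_0$, so
\[
\Delta_{C,s_0+}\equ \Delta_{\sqrt{s_0},\sqrt{s_0},1/\sqrt{s_0}}\ ,
\]
i.e.\ the right triangle with vertices $(0,0)$, $(\sqrt{s_0},0)$, $(\sqrt{s_0},1/\sqrt{s_0})$. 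Its boundary $\partial\Delta_{C,s_0+}$ is the union of the three explicit segments joining these vertices.

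Next I would fix $\epsilon>0$ and, using continuity of $s\mapsto \hat\mu(C,s)$ and of $s\mapsto\sqrt{s}$ at $s_0$, pick $\delta>0$ small enough that for $|s-s_0|<\delta$ the four quantities $\hat\mu(C,s)$, $\sqrt{s}$, $s/\hat\mu(C,s)$, $1/\hat\mu(C,s)$ all lie within $\epsilon$ of $\sqrt{s_0}$ or $1/\sqrt{s_0}$ (as appropriate). For each such $s$ Corollary \ref{Cor:quadrilateral} gives an exhaustive dichotomy for $\Delta_{C,s_+}$: either (i) it is the triangle with vertices $(0,0)$, $(\sqrt{s},0)$, $(\sqrt{s},1/\sqrt{s})$, or (ii) it is the quadrilateral with vertices
\[
(0,0),\ \bigl(s/\hat\mu(C,s),0\bigr),\ \bigl(s/\hat\mu(C,s),1/\hat\mu(C,s)\bigr),\ \bigl(\hat\mu(C,s),c\bigr),
\]
for some $c\in[0,\hat\mu(C,s)/s]$.

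The key observation is that in either case every vertex of $\Delta_{C,s_+}$ is forced near $\partial\Delta_{C,s_0+}$. In case (i) this is immediate from $\sqrt{s}\to\sqrt{s_0}$. In case (ii) the first three listed vertices converge respectively to $(0,0)$, $(\sqrt{s_0},0)$, $(\sqrt{s_0},1/\sqrt{s_0})$, which are actual vertices of $\Delta_{C,s_0+}$; a short triangle inequality then bounds their distance to $\partial\Delta_{C,s_0+}$ by a linear function of $\epsilon$. The last vertex $(\hat\mu(C,s),c)$ has first coordinate within $\epsilon$ of $\sqrt{s_0}$ and second coordinate in $[0,\hat\mu(C,s)/s]\subset [0,1/\sqrt{s_0}+\epsilon]$, so it lies within $O(\epsilon)$ of the vertical edge of $\Delta_{C,s_0+}$.

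I do not foresee a serious obstacle: the only point worth flagging is the freedom of the parameter $c$ in case (ii). Since $c$ is only constrained to a closed interval whose length stays near $1/\sqrt{s_0}$, one must not try to track $c$ itself (which may jump wildly as $s$ varies), but only use the crude bound above on its ambient first coordinate $\hat\mu(C,s)\to\sqrt{s_0}$ to place $(\hat\mu(C,s),c)$ uniformly close to the vertical edge $\{\sqrt{s_0}\}\times[0,1/\sqrt{s_0}]$ of $\Delta_{C,s_0+}$. Combining all the estimates yields the desired $\delta$ and closes the proof.
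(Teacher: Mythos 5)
Your argument is correct and is essentially the paper's own: both rest on Corollary \ref{Cor:quadrilateral} together with the continuity of $s\mapsto\hat\mu(C,s)$, the only cosmetic difference being that the paper packages the conclusion as a two-sided inclusion between the triangles $\Delta_{\frac{s}{\hat\mu},\frac{s}{\hat\mu},\frac{1}{\hat\mu}}\subseteq\Delta_{C,s_+}\subseteq\Delta_{\hat\mu,\hat\mu,\frac{\hat\mu}{s}}$, whereas you track the explicit vertices case by case. Your remark about not trying to control $c$ and only using the crude bound $c\in[0,\hat\mu(C,s)/s]$ is exactly the right precaution.
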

\begin{proof}
By Corollary~\ref{Cor:quadrilateral}, for every $s$ there are inclusions
\[ \Delta_{\frac{s}{\hat\mu(C,s)},\frac{s}{\hat\mu(C,s)},\frac{1}{\hat\mu(C,s)}}
\subseteq \Delta_{C,s_+}
\subseteq
\Delta_{\hat\mu(C,s),\hat\mu(C,s),\frac{\hat\mu(C,s)}s}\]

Since $\hat\mu(C,s)$ is a continuous function of $s$,
it follows that for every $\epsilon>0$
there is $\delta>0$ such that for $|s-s_0|<\delta$,
\[ \Delta_{\sqrt{s_0}-\epsilon,\sqrt{s_0}-\epsilon,\frac{\sqrt{s_0}-\epsilon}{s}}
\subseteq \Delta_{C,s_+}
\subseteq
\Delta_{\sqrt{s_0}+\epsilon,\sqrt{s_0}+\epsilon,\frac{\sqrt{s_0}+\epsilon}{s}}\]
and the claim follows,
since $\Delta_{C,s_0+}=\Delta_{\sqrt{s_0},\sqrt{s_0},1/\sqrt{s_0}}$.
\end{proof}
The mutations described in Theorem~\ref{thm:bb}(iii), can be called \emph{supraminimality mutations}.
\begin{Cor}
Any mutation of $\Delta_{C,s_+}$  is supraminimal.
\end{Cor}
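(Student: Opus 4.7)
The plan is to argue by contraposition of the preceding proposition, combined with part (iii) of Theorem~\ref{thm:bb}. Suppose $\Delta_{C,s_+}$ has a mutation at some $s_0 \in (1, d^2)$. The preceding proposition asserts that the equality $\hat\mu(C,s_0) = \sqrt{s_0}$ would force continuity of $\Delta_{C,s_+}$ at $s_0$; since by hypothesis there is a mutation at $s_0$, and since $\hat\mu(C,s) \geqslant \sqrt{s}$ in general, we are forced to have the strict inequality $\hat\mu(C,s_0) > \sqrt{s_0}$. Consequently $v_1(C,s_0)$ is non-minimal, and by the very definition of non-minimality this produces a supraminimal curve $V$ for $v_1(C,s_0)$, with equation $f = 0$ say.

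By Theorem~\ref{thm:bb}(i), the set $U \subseteq [1,+\infty)$ on which $V$ is supraminimal is open and contains $s_0$. Let $(a,b)$ denote the connected component of $U$ containing $s_0$; by maximality, $V$ fails to be supraminimal at the endpoints $a$ and $b$, so the hypotheses of Theorem~\ref{thm:bb}(ii)--(iii) are fulfilled on $(a,b)$. Part~(iii) of that theorem then identifies the mutations of $\Delta_{C,s_+}$ inside $(a,b)$ with precisely the finitely many points $\sigma \in (a,b)$ at which $\partial v_1(C;f)$ is discontinuous; these are by construction the supraminimality mutations induced by $V$. Since $s_0 \in (a,b)$ is a mutation point, it must be one of these $\sigma$, and the corollary follows.

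The only mild subtlety lies in handling possible degenerate boundary behaviour of the interval $(a,b)$ (for instance, if $V$ remained supraminimal for arbitrarily large $s$): this is ruled out using the uniqueness of supraminimal curves from Remark~\ref{rem:acc} together with Corollary~\ref{cor:conj}, which pin down the supraminimal curve for $s \geqslant d^2$ as $C$ itself. Beyond this minor bookkeeping step, no genuine obstacle appears; the corollary is essentially a formal consequence of the preceding proposition and the structural description given by Theorem~\ref{thm:bb}.
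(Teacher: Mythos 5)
Your argument is correct and is exactly the one the paper intends: the corollary is stated without proof as an immediate consequence of the preceding proposition (whose contrapositive shows a mutation forces $\hat\mu(C,s_0)>\sqrt{s_0}$, hence a supraminimal curve), combined with Theorem~\ref{thm:bb} identifying such mutations as supraminimality mutations. Your extra bookkeeping about the maximal interval $(a,b)$ via openness, uniqueness of supraminimal curves, and Corollary~\ref{cor:conj} is a sound way to make the implicit step explicit.
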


For general choices of $O$ and $C$, all known supraminimal curves
are $(-1)$-curves.
It would be interesting to explore the behavior of $\Delta_{C,s_\pm}$
on surfaces different from~$\P^2$, or for non-quasimonomial valuations
(i.e., allowing singular $C$, and following arcs in the whole valuative
tree $\mathcal V$ rather than only $\mathcal{QM}$). It is tempting
to conjecture that mutations in general should be supraminimal
and related to extremal rays in (some) Mori cone.

\subsection{Explicit computations}
%\todo[inline]{Compute the \NO bodies for $C=$ line, conic, and for cubics in the interval in which $\hat\mu$ is known  by DHKRS. For cubics, we'll need the "Fibonacci curves" so I include the statements for them. Include the relation with Nagata-like statements.}

In this section we compute the \NO bodies in the range in which $\hat\mu(C,s)$ is known (see \cite {DHKRS} and \S\ref {ssec:mu}).

\subsubsection{The line case}

This case is an immediate consequence of Theorem~\ref{thm:larges}, which we state here for completeness.

\begin{Pro}
 If $C$ is a line, then for every $s\geqslant 1$ one has
  $$\Delta_{C,s_+}=\Delta_{1,s,1},$$
  hence $\Delta_{C,s_+}$ depends linearly on $s$ and there are no mutations.
 \end{Pro}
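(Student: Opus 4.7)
The plan is to derive this directly from Theorem~\ref{thm:larges}. Since $C$ is a line, we have $d = \deg(C) = 1$, so the threshold condition $s \geqslant d^2$ becomes simply $s \geqslant 1$, which is the entire range we are interested in. Applying Theorem~\ref{thm:larges} then gives
\[
\Delta_{C,s_+} \equ \Delta_{d,\tfrac{s}{d},\tfrac{1}{d}} \equ \Delta_{1,s,1}
\]
for every $s \geqslant 1$, which is the claimed equality.

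For the second assertion, I would simply read off the vertices of $\Delta_{1,s,1}$, namely $(0,0)$, $(1,0)$ and $(s,1)$. The number of proper vertices is constant (equal to two) as $s$ varies in $[1,+\infty)$, and their coordinates are affine functions of $s$ (in fact, two of them are constant and the third depends linearly on $s$). This is precisely the definition of $\Delta_{C,s_+}$ depending linearly on $s$, and it obviously implies continuity at every $s_0$, so no mutations occur.

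There is no main obstacle here: once Theorem~\ref{thm:larges} is in place, the line case is just the specialization $d=1$, for which the threshold $d^2 = 1$ places no restriction on $s$. One could alternatively note that this is consistent with the computation of $\hat\mu(C,s)$ for $C$ a line recalled in Remark~\ref{muhatDHKRS} (where $\hat\mu(C,s) = s$ for $1 \leqslant s \leqslant 2$ and $\hat\mu(C,s) = 2$ for $s \geqslant 2$ when $C$ is a conic --- but for $C$ a line one has $\hat\mu(C,s)=s$ for all $s$, corresponding to the unique supraminimal curve being $C$ itself, which is the trivial case in the sense of the discussion after Remark~\ref{rem:acc}). In particular, $v_1(C,s)$ is non-minimal for $s > 1$, with the line $C$ itself being the supraminimal curve, but this trivial supraminimality does not produce any mutation.
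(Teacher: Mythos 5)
Your proof is correct and is exactly the paper's argument: the paper derives this proposition as an immediate consequence of Theorem~\ref{thm:larges} with $d=1$, so that the threshold $s\geqslant d^2=1$ covers the whole range, and the linearity/no-mutation claims follow by inspecting the vertices $(0,0)$, $(1,0)$, $(s,1)$. (Your parenthetical aside slightly garbles Remark~\ref{muhatDHKRS} --- the values $s$ for $1\leqslant s\leqslant 2$ and $2$ for $s\geqslant 2$ are the ones listed there for a line, not a conic --- but this does not affect the argument, and your conclusion that $\hat\mu(C,s)=s$ for all $s$ with $C$ itself as the trivial supraminimal curve agrees with Corollary~\ref{cor:conj}.)
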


\subsubsection{The conic case}

\begin{Pro}\label{prop:conic}
 If $C$ is a conic, then
 \[
 \Delta_{C,s_+}=
\begin{cases}
 \Delta_{1,s,1} & \text{if } 1\leqslant s<2 \\
  \Delta_{2,\frac s2,\frac 12}  & \text{if }   s\geqslant 2 \ .
\end{cases}
 \]
So, there is only one mutation at $s=2$, and $v_1(C,s)$ is minimal only when $s=1, 4$.
 \end{Pro}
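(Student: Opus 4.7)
The plan is an area-matching argument: I exhibit three valuative points of $\Delta_{C,s_+}$ spanning a triangle of area $\tfrac12$, and invoke $\vol(\Delta_{C,s_+})=\tfrac12$ (Theorem~\ref{thm:LM2} together with Remark~\ref{remark:volume-OK}) to force equality. This circumvents the need to distinguish the triangle versus quadrilateral cases of Corollary~\ref{Cor:quadrilateral} or to invoke the values of $\hat\mu(C,s)$ from Remark~\ref{muhatDHKRS}.

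Normalize coordinates so that $C$ is tangent to $y=0$ at $O$, with local expansion $\xi(x)=\tfrac12 x^2+\ldots$. Using Proposition~\ref{deriv_val} I would compute $v_+(C,s)(f)$ for four polynomials: a linear form not vanishing at $O$, producing the valuative point $(0,0)$; the coordinate $x$, whose trivial $C$-expansion gives $v_+(C,s)(x)=(1,0)$; an equation $f_0$ of $C$, for which Example~\ref{ex:C} and Proposition~\ref{deriv_val} yield $v_+(C,s)(f_0)=(s,1)$, hence the valuative point $(s/2,1/2)$ after dividing by $\deg(f_0)=2$; and finally the tangent line $y$, whose $C$-expansion $y=(y-\xi(x))+\xi(x)$ has a Newton polygon with exactly two vertices, $(0,1)$ and $(2,0)$, joined by a single edge of slope $-1/2$. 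By Proposition~\ref{lem:trop} this last computation splits at $s=2$: for $1\leqslant s<2$ one finds $v_+(C,s)(y)=(s,1)$, while for $s\geqslant 2$ one finds $v_+(C,s)(y)=(2,0)$.

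For $1\leqslant s<2$, the body $\Delta_{C,s_+}$ contains the three valuative points $(0,0)$, $(1,0)$, $(s,1)$, hence contains the triangle $\Delta_{1,s,1}$; its area $\tfrac12$ matches $\vol(\Delta_{C,s_+})$, forcing equality. For $s\geqslant 2$, the same argument with $(0,0)$, $(2,0)$, $(s/2,1/2)$ gives the triangle $\Delta_{2,s/2,1/2}$, again of area $\tfrac12$, hence $\Delta_{C,s_+}=\Delta_{2,s/2,1/2}$.

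The remaining assertions are then immediate: the rightmost vertex jumps from $\lim_{s\to 2^-}(s,1)=(2,1)$ to $(1,1/2)$, producing a single mutation at $s=2$; reading $\hat\mu(C,s)$ as the largest $t$-coordinate of $\Delta_{C,s_+}$, it equals $s$ on $[1,2]$ and $\max(2,s/2)$ on $[2,\infty)$, meeting $\sqrt s$ exactly at $s=1$ and $s=4$. I do not foresee a serious obstacle: the entire argument pivots on the Newton polygon of $y$ with respect to $C$, whose unique edge of slope $-1/2$ forces the transition at $s=2$; the only routine verification is that the four candidate points are genuine elements of the body, which follows at once from their definition once $v_+$-values are divided by the degree of the polynomial.
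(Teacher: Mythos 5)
Your proof is correct and follows essentially the same route as the paper: the same valuative points $(0,0)$, $(1,0)$ resp.\ $(2,0)$, and the point coming from the conic (or its tangent line), combined with the area-$\tfrac12$ argument via Theorem~\ref{thm:LM2} and Remark~\ref{remark:volume-OK}. The only (harmless) difference is that you compute $v_+(C,s)(y)$ uniformly from the Newton polygon of $y$ with respect to $C$, whereas the paper gets $(s,1)$ for $s<2$ from the cluster of centres, $(2,0)$ for $s\in[2,4)$ from the total transform of the tangent line on $X_{\mathfrak K}$, and invokes Theorem~\ref{thm:larges} for $s\geqslant 4$; your single computation streamlines this slightly.
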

\begin{proof}

The case $s=1$ is trivial. So, assume first that $s\in (1,2)\cap\Q$. By Remark~\ref{rem:flag2}, we know that $(1,0)\in\Delta_{C,s_+}$. It remains to show that $(s,1)\in\Delta_{C,s_+}$. Note that the only free points in the cluster $\mathfrak K=\mathfrak K_{v_+(C,s)}$ are $p_1=O$ and $p_2$. The line through $p_1$ and $p_2$, i.e., the tangent line to $C$, has equation $y=0$ and behaves exactly like $C$ with respect to $\mathfrak K$. Thus, again by Remark~\ref{rem:flag2}, we have
\[
\frac {v_+(C,s;y)}{\deg(y)}=v_+(C,s;y)=(s,1)\ ,
\]
implying that $(s,1)\in \Delta_{C,s_+}$.

Take now $s\in [2,4)\cap\Q$ and use the notation of \S\ref {flagmodel}.  By Remark \ref {rem:flag2}, we have that $(\frac s2,\frac 12)\in\Delta_{C,s_+}$, given by the valuation of a local equation of $C$. It remains to show that $(2,0) \in \Delta_{C,s_+}$. Let $L$ on $X_\mathfrak K$ be the total transform of the tangent line to $C$ at $O$. Note that $L-B_1-B_2$ does not contain any of the other exceptional curves. Thus, arguing as in Lemma~\ref{lem:loc}(ii), it is easy to see that $L$ contains $A_k$ with  multipliticy $2q$ and $L-2qA_k$ passes through $x_+$ with multiplicity $2q''$.  Hence $(2q,2q')\in \Delta_{Y_+}$ and, by Remark \ref {rem:flag2}, this implies that $(2,0) \in \Delta_{C,s_+}$.

Finally, by Theorem \ref {thm:larges}, the assertion holds for $s\geqslant 4$.
\end{proof}

\subsubsection{The higher degree case}

The case in which $\deg(C)\geqslant 3$ is more interesting, since it gives rise to infinitely many mutations of the \NO body. Recall the notation $\{F_i\}_{i\in \mathbb{Z}_{\geqslant 0}\cup \{-1\}}$ for the sequence of the Fibonacci numbers,
and $\phi$ for the golden ratio (see \S\ref {ssec:mu}).

\begin{Pro}\label{prop:Or}
 For each odd integer $i\geqslant 5$, there exists a rational curve $C_i\subseteq\P^2$ of degree $F_{i}$
 with a single cuspidal (i.e., unibranch) singularity at $O$ and  characteristic exponent $\frac {F_{i+2}} {F_{i-2}}\in (6,7)$,  whose six free points infinitely near to $O$ are in general position. Let $C_1$ be a line (of degree $F_1$ with \emph{characteristic exponent} $\frac {F_{3}} {F_{-1}}=2$) and $C_3$ be a conic (of degree $F_3$ with \emph{characteristic exponent} $\frac {F_{5}} {F_{1}}=5$). All these curves are $(-1)$-curves in their embedded resolution (i.e., after blowing up the appropriate points of the cluster of centres determined by the characteristic exponent).

 If $C$ is a general curve with $\deg (C) \geqslant 3$ through the origin $O$, then the curve~$C_i$, with equation $f_i=0$, through $O$ and the first six infinitely near points to $O$ along~$C$,  satisfies
\[\hat \mu(C,s)=\frac{v_1(C,s;f_i)}{\deg (f_i)}=
\begin{cases}
 \frac{F_{i-2}}{F_{i}}s & \text{if }s \in \left[\frac{F_{i}^2}{F_{i-2}^2},\frac{F_{i+2}}{F_{i-2}}\right),\\
\frac{F_{i+2}}{F_{i}} & \text{if }s \in \left[\frac{F_{i+2}}{F_{i-2}},\frac{F_{i+2}^2}{F_{i}^2}\right] .
\end{cases}
\]
Thus $C_i$ is supraminimal for $v_1(C,s)$ for
 $s\in \left(\frac{F_{i}^2}{F_{i-2}^2},\frac{F_{i+2}^2}{F_{i}^2}\right)$ and any odd $i\geqslant 1$.

\end{Pro}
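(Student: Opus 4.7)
The existence and basic properties of the $C_i$ are due to Orevkov \cite{Ore02}: the plan is to invoke his recursive construction, which starts from the line $C_1$ and the conic $C_3$ and produces each $C_{i+2}$ from $C_i$ via a suitable quadratic Cremona transformation. Induction yields $\deg(C_i)=F_i$, shows that $C_i$ is rational with a unique unibranch singularity at $O$, and identifies the characteristic exponent of its cusp as
\[
s_i=[6;1,\underbrace{2,\ldots,2}_{i-3}]=\frac{F_{i+2}}{F_{i-2}}\in (6,7)
\]
for odd $i\geqslant 5$ (directly for $i=5,7,9$ one has $s_i=13/2,\,34/5,\,89/13$, and the Fibonacci identity $F_{i+2}F_{i-2}=F_i^2+1$ gives the bound in general). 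The $(-1)$-curve property of the strict transform $\widetilde C_i$ in the minimal embedded resolution $X_{\mathfrak K_i}$ follows by combining the proximity equalities \eqref{eq:proximity-weights}, rationality of $C_i$, and adjunction.

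The heart of the argument is to compute $v_1(C,s;f_i)$ when $C$ is a general curve of degree $\geqslant 3$ through $O$. Since the first six infinitely near points of $C$ are free and in general position, Orevkov's construction provides a curve $C_i$ passing through $O$ and these six points with multiplicity one, whose remaining cluster of centres is disjoint from that of $C$. Applying \eqref{eq:proximity} to $v_1(C,s;f_i)$ and sorting terms by the partial quotients of the continued fraction expansion of $s$ produces a piecewise linear function of $s$ with a single break in $\left[F_i^2/F_{i-2}^2,\,F_{i+2}^2/F_i^2\right]$, located precisely at $s=s_i$; this corresponds to the relevant edge of $\NP(C,f_i)$ via Proposition \ref{lem:trop}(ii). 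On the left subinterval the vertex $(F_{i+2},0)$ of $\NP(C,f_i)$ dominates, giving $v_1(C,s;f_i)=F_{i-2}\cdot s$, whereas on the right subinterval the vertex at height $F_{i-2}$ contributes the constant value $F_{i+2}$. Dividing by $\deg(f_i)=F_i$ yields the stated formulas.

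Comparison with Remark~\ref{muhatDHKRS} then identifies these quantities with $\hat\mu(C,s)$ on the indicated range, so $C_i$ computes $\hat\mu(C,s)$ there. Supraminimality follows from the strict inequality $\hat\mu(C,s)>\sqrt{s}$ on the open subinterval $(F_i^2/F_{i-2}^2,\,F_{i+2}^2/F_i^2)$: equality holds precisely at the two endpoints, where $\sqrt{s}$ reduces respectively to $F_i/F_{i-2}$ and $F_{i+2}/F_i$, as follows again from $F_{i+2}F_{i-2}=F_i^2+1$.

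The principal obstacle I foresee is making the ``general position'' condition precise: one must verify that for $C$ general of degree $\geqslant 3$, the linear system of cuspidal rational curves of degree $F_i$ through $O$ with a cusp of the prescribed analytic type and passing through the six specified free infinitely near points on $C$ is nonempty, and that a general member has strict transform a smooth $(-1)$-curve meeting $\mathfrak K_{(C,s)}$ only at these seven points. A dimension count combined with Orevkov's explicit construction should close this, but controlling the combinatorics of the interaction between the clusters $\mathfrak K_i$ and $\mathfrak K_{(C,s)}$ is the delicate step.
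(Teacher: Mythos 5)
Your approach coincides with the paper's, whose entire proof is a citation: existence and the stated properties of the curves $C_i$ are \cite[Theorem C, (a) and (b)]{Ore02}, and the identification of $v_1(C,s;f_i)/\deg(f_i)$ with $\hat\mu(C,s)$ is \cite[Proposition 5.5]{DHKRS}; your Newton--polygon computation of $v_1(C,s;f_i)=\min(sF_{i-2},F_{i+2})$ and the supraminimality argument via $F_{i-2}F_{i+2}=F_i^2+1$ are consistent reconstructions of that. Two small corrections: the continued fraction you write, $[6;1,2,\dots,2]$, is not the expansion of $F_{i+2}/F_{i-2}$ (e.g.\ $13/2=[6;2]$, $34/5=[6;1,4]$, $89/13=[6;1,5,2]$), and you have swapped which vertex of $\NP(C,f_i)$ produces which linear piece (the vertex $(0,F_{i-2})$ gives $sF_{i-2}$, the vertex $(F_{i+2},0)$ gives the constant $F_{i+2}$), though your final formulas are right. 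The ``delicate step'' you defer at the end --- general position of the six free points and the interaction of the clusters --- is precisely the content of the two cited results, so the paper does not (and you need not) argue it independently.
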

\begin{proof} The existence result is in \cite [Theorem C, (a) and (b)] {Ore02}. The rest of the assertion is \cite  [Proposition 5.5] {DHKRS}.
\end{proof}

\begin{Pro}\label{prop:cubic} If $\deg(C)\geqslant 3$ and $O\in C$ is a general point, then:\\\begin{inparaenum}
\item [(i)] one has
 \begin{equation}\label{eq:cases}
 \Delta_{C,s_+}=
\begin{cases}
 \Delta_{1,s,1} & \text{if } 1\leqslant s<2 \\
  \Delta_{2,\frac s2,\frac 12}  & \text{if }   2\leqslant s \leqslant 5\\
   \Delta_{\frac 52,\frac {2s}5, \frac 25}  & \text{if }  5 < s\leqslant 6+\frac 14,\\
 \end{cases}
\end{equation}
hence $ \Delta_{C,s_+}$ mutates at $s=2, s=5$,
and depends linearly on $s$ between mutations;\\
\item [(ii)] for $s\in [6+\frac {1}4, \phi^4)$ one has
\[
 \Delta_{C,s_+}=
 \Delta_{\frac{F_{i}}{F_{i-2}},\frac {F_{i-2}}{F_i}s,\frac{F_{i-2}}{F_{i}}} \text{if }s \in \left[\frac{F_{i}}{F_{i-4}},\frac{F_{i+2}}{F_{i-2}}\right);
\]
i.e.,  $\Delta_{C,s_+}$ mutates at $\frac {F_{i+2}} {F_{i-2}}$, for all odd integers $i\geqslant 5$
(these mutations agree with part (ii) of Theorem \ref {thm:bb}),
and depends linearly on $s$ between mutations;\\
 \item [(iii)] for $s\in (\phi^4, 7)$, the \NO body is the quadrilateral with vertices
$$  (0,0), (\frac{3s}{1+s},0), (\frac {1+s}3,\frac 13), (\frac {3s}{1+s},\frac 3{1+s});$$
   \item [(iv)] for $s\in (7, 7+\frac 19)$  one has
 $$\Delta_{C,s_+}=\Delta_{\frac 83,\frac 38 s, \frac 38}.$$ \end{inparaenum}
 Accordingly, there is a mutation at $s=7$.
 \end{Pro}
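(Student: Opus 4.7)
The plan is to apply Corollary~\ref{Cor:quadrilateral} case by case, inputting the values of $\hat\mu(C,s)$ from Remark~\ref{muhatDHKRS} and reading the second coordinate $c$ of the rightmost vertex $E=(\hat\mu,c)$ off the Newton polygon of the supraminimal curve provided by Proposition~\ref{prop:Or}. Concretely, given an equation $f=0$ of that curve, Propositions~\ref{lem:trop} and~\ref{deriv_val} give $c=(\partial_+ v_1(C,\cdot;f))(s)/\deg(f)$; once $\hat\mu$ and $c$ are known, Corollary~\ref{Cor:quadrilateral} determines $\Delta_{C,s_+}$ completely.

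In each subinterval where $\hat\mu(C,s)=M$ is locally constant, $v_1(C,\cdot;f)=(\deg f)M$ is constant in $s$, so $\partial_+ v_1=0$ and $c=0$: the quadrilateral of Corollary~\ref{Cor:quadrilateral} collapses to the triangle $\Delta_{M,s/M,1/M}$. In each subinterval where $\hat\mu(C,s)=\alpha s$ is locally affine of slope $\alpha$, $v_1(C,\cdot;f)=(\deg f)\alpha s$ is linear, so $\partial_+ v_1=\alpha\deg f$ and $c=\alpha=\hat\mu/s$: the quadrilateral collapses to the triangle $\Delta_{1/\alpha,\alpha s,\alpha}$. Feeding the tangent line on $[1,2]$, the osculating conic $C_3$ on $[2,5]$, the Orevkov curves $C_i$ on each Fibonacci subinterval inside $[6+\frac14,\phi^4)$, and the nodal cubic of Example~\ref{616-2} on $[7,7+\frac19]$ into this dichotomy reproduces the triangle formulas of~(i),~(ii), and~(iv), and confirms the stated affine dependence on $s$ between consecutive mutations.

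For (iii), on $s\in(\phi^4,7)$ we have $\hat\mu(C,s)=(1+s)/3$, realised by the nodal cubic $V$ of Example~\ref{616-2}. Its Newton polygon has three vertices $(0,2)$, $(1,1)$, $(8,0)$, and by Proposition~\ref{lem:trop} the vertex $(1,1)$ is active throughout $s\in(1,7)$; hence $v_1(C,\cdot;V)=1+s$ and $\partial_+v_1=1$. Dividing by $\deg V=3$ gives $c=1/3$. Since $0<1/3<(1+s)/(3s)=\hat\mu/s$ for every finite $s>1$, Corollary~\ref{Cor:quadrilateral} produces a non-degenerate quadrilateral with exactly the four vertices $(0,0),\ (3s/(1+s),0),\ (3s/(1+s),3/(1+s)),\ ((1+s)/3,1/3)$.

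The mutations at $s=2$, $5$, $F_{i+2}/F_{i-2}$ (odd $i\geqslant 5$), and $s=7$ follow by comparing one-sided limits of the explicit formulas above: at each such value the supraminimal curve switches (line to conic at $s=2$, conic to $C_5$ at $s=5$, $C_i$ to $C_{i+2}$ at $s=F_{i+2}/F_{i-2}$, and the cubic of~(iii) degenerates to $c=0$ at $s=7$ as computed in Example~\ref{616-2}), so the rightmost vertex of the body jumps discontinuously. The principal technical obstacle is the input of Proposition~\ref{prop:Or}---namely certifying, for each subinterval, the identity of the supraminimal curve that computes $\hat\mu(C,s)$ and its weighted cluster of centres, which is the content of~\cite{DHKRS}. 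With that in hand, the remainder is the routine Newton-polygon bookkeeping sketched above.
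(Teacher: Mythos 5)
Your overall strategy is the paper's own: apply Corollary~\ref{Cor:quadrilateral} with the values of $\hat\mu(C,s)$ from Remark~\ref{muhatDHKRS}, and pin down the second coordinate $c$ of the rightmost vertex by evaluating $v_+(C,s)$ on the curve that computes $\hat\mu$ (Remark~\ref{rem:acc}). Your constant/linear dichotomy for $c$ is a clean way of organizing that computation, and your treatment of case~(iii) via the nodal cubic is exactly what the paper does.

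There is, however, a concrete error in your bookkeeping of which curve computes $\hat\mu$ on which interval, and consequently in your description of the mutation mechanism. By Proposition~\ref{prop:Or}, the curve $C_i$ is supraminimal precisely on $\bigl(F_i^2/F_{i-2}^2,\,F_{i+2}^2/F_i^2\bigr)$: the tangent line on $(1,4)$, the osculating conic on $(4,25/4)$, and so on. Thus on $(2,4)$ it is still the line, not the conic, that computes $\hat\mu=2$; the conic there has $v_1(C,s;C_3)/2=s/2<2$, so feeding it into your dichotomy does not locate the rightmost vertex (its valuative point is the vertex $B$ of Corollary~\ref{Cor:quadrilateral}, not $E$; you recover the correct triangle only because the two degenerate forms $\Delta_{M,s/M,1/M}$ and $\Delta_{1/\alpha,\alpha s,\alpha}$ coincide when $M=1/\alpha$). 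Correspondingly, the supraminimal curve does \emph{not} switch at the mutation points $s=2$, $s=5$, $s=F_{i+2}/F_{i-2}$: these are the characteristic exponents of $C_1$, $C_3$, $C_i$, i.e.\ the interior points $\sigma$ of Theorem~\ref{thm:bb}(ii) at which $\partial v_1(C,\cdot\,;f_i)$ jumps while the \emph{same} curve remains supraminimal on both sides. The handoffs from $C_{i-2}$ to $C_i$ occur instead at $s=F_i^2/F_{i-2}^2$ (e.g.\ $s=4$ and $s=25/4$), where no mutation takes place and the triangle formula is continuous. Once the curves are matched to the correct subintervals, your argument goes through and agrees with the paper's proof.
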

 \begin{proof}
   All the claims follow from Corollary \ref{Cor:quadrilateral} taking into account the computations of $\hat\mu(C,s)$ from
\cite{DHKRS}, see Remark \ref{muhatDHKRS}. The only problem, when applying Corollary~\ref{Cor:quadrilateral}, is to know where the vertex farthest to the right of $\Delta_{C,s_+}$ lies on the line $t=\hat\mu(C,s)$,
i.e., we have to compute the number $c$ appearing in Corollary~\ref{Cor:quadrilateral}(ii). This is given by the valuation of the curve $C_1$ or $C_3$ in case~(i), of the curve $C_i$ for any
$i\geqslant 5$ in case~(ii)
(where both examples of curves are retrieved from Proposition~\ref{prop:Or}), 
and the cubic curve $D_1$ from \cite[Table~5.1]{DHKRS}
for both~(iii) and~(iv).
 \end{proof}

\begin{Rem} If $\deg(C)=d$, Proposition \ref {prop:Or} leaves an unkown interval $[7+\frac 19,d^ 2)$ whereas for $s\in [d^ 2,+\infty)$
the \NO body is known by Theorem \ref {thm:larges} and there are no mutations there. Conjecturally, the same should happen for
$$s\geqslant 8+\frac 1 {36}$$
(see Conjecture \ref {con:nn}) if $O$ is a general point of $C$.
\end{Rem}

\begin{Cor}\label{corollary:final}
   Assume $\deg(C)\geqslant 3$ and $O$ is a general point of $C$. Then for $s\leqslant 7$,
   the Newton--Okounkov body $\Delta_{C,s_+}$
lies in the half-plane $t+u\leqslant 3$. Moreover,
\begin{enumerate}
\item[(i)] If $s\in [1,\phi^4)\cup [7,7+\frac{1}{9})$, then $\Delta_{C,s_+}$ is a triangle whose vertices are
valuative.
\item[(ii)] If $s\in(\phi^4,7)$, then $\Delta_{C,s_+}$ is a quadrilateral with at least one vertex being
a non-valuative point.
\end{enumerate}
 \end{Cor}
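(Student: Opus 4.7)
The plan is to read each claim off Proposition~\ref{prop:cubic}, which pins down $\Delta_{C,s_+}$ on every subinterval of $[1,7+1/9)$, and verify it by inspection of the vertices.

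\emph{Half-plane bound.} For every body listed in Proposition~\ref{prop:cubic} with $s\leqslant 7$, the vertices are explicit rational functions of $s$, and one checks $t+u\leqslant 3$ at each of them directly. The crucial input is the Fibonacci identity $F_{i+2}+F_{i-2}=3F_i$ (immediate from $F_{i+2}=2F_i+F_{i-1}$ together with $F_{i-2}=F_i-F_{i-1}$), which gives $t+u=F_{i-2}(s+1)/F_i\leqslant 3$ at the rightmost vertex of the $i$-th Fibonacci triangle, with equality precisely at the right-endpoint mutation $s=F_{i+2}/F_{i-2}$. On the quadrilateral regime $(\phi^4,7)$ the vertex $B=(3s/(1+s),3/(1+s))$ lies on $u=t/s$ and gives $t+u=\lambda(C,s)(1+1/s)=3$ identically, while $E=((1+s)/3,1/3)$ gives $(s+2)/3\leqslant 3$ precisely because $s\leqslant 7$. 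The case $s=7$ is the limiting triangle $\Delta_{8/3,21/8,3/8}$ of Example~\ref{616-2}.

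\emph{Part~(i).} In each triangular subinterval the three vertices are: the origin (valuative via any non-vanishing constant); the vertex on the line $u=t/s$, equal to $v_+(C,s;f)/\deg f$ for the equation $f$ of the supraminimal curve of Remark~\ref{muhatDHKRS} (a line on $[1,2)$; the conic $C_3$ on $[2,5]$; the Orevkov curve $C_i$ on the $i$-th Fibonacci interval; the cubic $D_1$ on $[7,7+1/9)$); and the vertex on the $t$-axis, equal to $v_+(C,s;g)/\deg g$ for an explicit curve $g$ whose $C$-Newton polygon produces $\partial_+=0$ on the given interval, namely $v_+(C,s;x)=(1,0)$ on $[1,2)$, $v_+(C,s;y)=(2,0)$ on $[2,5]$, $v_+(C,s;f_3)=(5,0)$ on $(5,25/4]$, $v_+(C,s;f_{i-2})=(F_i,0)$ on the $i$-th Fibonacci interval, and $v_+(C,s;D_1)=(8,0)$ on $[7,7+1/9)$. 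All these values are read off directly from the Newton polygons of \S\ref{ssec:NP}.

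\emph{Part~(ii) and the main obstacle.} For $s\in(\phi^4,7)$, the quadrilateral $OABE$ has $E=((1+s)/3,1/3)$ valuative via $D_1$ and the origin trivially valuative. To exclude at least one of $A=(3s/(1+s),0)$ and $B=(3s/(1+s),3/(1+s))$ from being valuative, I would combine the uniqueness of the $\hat\mu$-computing curve (Remark~\ref{rem:acc}) with a Zariski-decomposition analysis of $\alpha D-B_s$ on $X_{\mathfrak K}$ in the spirit of the proof of Corollary~\ref{Cor:quadrilateral}: any polynomial $f$ producing a valuative point at $A$ or $B$ would force a new effective class in the negative part at the parameter value $\alpha=(1+s)/3$, beyond the strict transform of $D_1$ and the exceptional divisors, which is impossible in the range $\hat\mu>\sqrt s>\lambda$. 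The \textbf{main obstacle} is carrying out this exclusion rigorously; it amounts to translating the uniqueness of the $\hat\mu$-computing curve into a uniqueness statement about valuative points at the specific rational value $t=\lambda(C,s)=3s/(1+s)$, and is the most delicate step of the proof.
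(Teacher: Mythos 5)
Your reduction of the half-plane bound and of the triangle cases with $s<7$ to Proposition~\ref{prop:cubic} is sound, and the Fibonacci identity $F_{i+2}+F_{i-2}=3F_i$ is indeed the right reason the bound $t+u\leqslant 3$ holds with equality exactly at the mutation points. But there are two genuine gaps. First, in part~(i) you assign the cubic $D_1$ to \emph{both} proper vertices on $[7,7+\frac19)$; this cannot work. Since the Newton polygon of $D_1$ with respect to $C$ has vertices $(0,2)$, $(1,1)$, $(8,0)$, one has $v_+(C,s;D_1)=(8,0)$ for $s\geqslant 7$, so $D_1$ only certifies the vertex $(\frac83,0)$. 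The vertex $(\frac{3s}{8},\frac38)$ requires a curve $f$ with $\partial_+v_1(C;f)(s)=\frac38\deg f>0$ on that interval, and producing one is the hardest existence step: the paper exhibits a genus-one curve $V$ of degree $24$, with multiplicity $9$ at $p_1,\dots,p_7$ and multiplicity $1$ at $p_8,\dots,p_{16}$ (so its Newton polygon has vertices $(0,9)$ and $(64,0)$, whence $v_+(C,s;V)=(9s,9)$ for $s<\frac{64}{9}$), constructed via the degree-$8$ Cremona transformation underlying Orevkov's curves. Nothing in your proposal supplies this curve.

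Second, for part~(ii) you correctly isolate the non-valuativity of a vertex as the main obstacle, but the route you sketch (a uniqueness statement extracted from the Zariski decomposition of $\alpha D-B_s$) is not carried out and, as stated, does not obviously close: valuative points are dense in $\Delta_{C,s_+}$, so the issue is not the existence of effective classes near $t=\lambda(C,s)$ but whether the exact corner $B=(\frac{3s}{1+s},\frac{3}{1+s})$ is attained. The paper's argument is different and much shorter: if $B=v_+(C,s;f)/\deg f$ for some $f$, then piecewise linearity of $s\mapsto v_1(C,s;f)$ (Proposition~\ref{lem:trop}) forces
\[
\left(\tfrac{3s}{1+s}+\tfrac{3\epsilon}{1+s},\ \tfrac{3}{1+s}\right)\in\Delta_{C,(s+\epsilon)_+}
\]
for small $\epsilon>0$, and this point has $t+u=3+\frac{3\epsilon}{1+s}>3$, contradicting the half-plane bound at $s+\epsilon<7$. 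You should replace your Zariski-decomposition plan with this (or an equally explicit) argument, since it is exactly the point where the half-plane bound you proved in the first step earns its keep.
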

\begin{proof}
All claims follow from Proposition~\ref{prop:cubic}, except the fact that $\Delta_{C,s_+}$ has a non-valuative vertex when $s\in (\phi^4,7)$, and that $(\frac{3s}{8},\frac{3}{8})$ is valuative when $s\in [7,\frac{64}{9})$.

First, assume that $(\frac{3s}{1+s},\frac{3}{1+s})$ is valuative for some $s <7$.
This means that there is a polynomial $f$ of degree $d$ with
\[ v_1(C,s;f)=\frac{3s}{1+s}d,\quad\text{and}\quad \partial_+v_1(C;f)(s)=\frac3{1+s}d. \]
As $v_1(C;f)$ is piecewise linear as a function of $s$, then for small enough $\epsilon>0$,
\[
v_1(C,s+\epsilon;f)=\frac{3s}{1+s}d+\frac3{1+s}d\epsilon,\quad\text{and}
\quad \partial_+v_1(C;f)(s+\epsilon)=\frac3{1+s}d.
\]
In particular, this implies that
\[
\left( \frac{3s}{1+s}+\frac{3\epsilon}{1+s}, \frac3{1+s} \right) \in \Delta_{C,(s+\epsilon)_+}.
\]
But this contradicts that for $s+\epsilon<7$, the Newton--Okounkov body $\Delta_{C,s_+}$ lies in the half-plane $t+u\leqslant 3$. So, $(\frac{3s}{1+s},\frac{3}{1+s})$ is a non-valuative vertex.

Finally, $(\frac{3s}{8},\frac{3}{8})$ is valuative because there is a unique curve $V$ of degree 24
whose Newton polygon with respect to $C$ has vertices $(0,9)$ and $(64,0)$. Indeed,
let $\frak K$ be the cluster of centers of $v_1(C,\frac{64}{9})$, which consists of 8
free points followed by~8 satellites, each proximate to its predecessor and to
$p_7$ (the continued fraction of~$\frac{64}{9}$ is $[7;9]$). Then $V$ has multiplicity
9 at each of $p_1, \dots, p_7$ and multiplicity 1 at $p_8, \dots, p_{16}$.

The curve $V$ has genus 1 and is obtained in this way. Consider the Cremona transformation $\omega$ determined by the homaloidal system of curves of degree 8 with triple points at a cluster $\mathfrak C$ of seven general infinitely near, free, base points (this Cremona transformation appears in the construction of the curves $C_i$ in
Proposition~\ref{prop:Or}, see \cite[proof of Theorem C]{Ore02}).

There is a unique cubic curve $\Gamma$ with a double point at the first  point of $\mathfrak C$  and passing simply through the remaining six points of $\mathfrak C$. This curve is contracted to a point by $\omega$.

Let $x\in \Gamma$ be a general point. There is a pencil $\mathcal P$ of cubics having intersection multiplicity 8 with $\Gamma$ at $x$. Then  $\mathcal P$ has 9 base points, 8 are given by the cluster formed by
$x$ and by the 7 infinitely near points to $x$ along $\Gamma$, and  there is a further base point $y\in \Gamma$. The general curve of $\mathcal P$ is irreducible, and its image via $\omega$ is the required curve $V$, which has genus 1.
\end{proof}

\begin{Rem}
It is somewhat mysterious that in the case $(ii)$ of Corollary~\ref{corollary:final}
one has a vertex of $\Delta_{C,s_+}$ that is not valuative, taking into account that  for $s<7$, $s\in \Q$, the Mori cone of $X_{\frak K}$ is polyhedral (see \cite{DHKRS}).
\end{Rem}

\bibliographystyle{amsplain}

\begin{thebibliography}{99}\footnotesize\itemsep=0cm\parskip=0cm

\bibitem {AKL}  Anderson, D.,  K\"uronya, A., Lozovanu, V., \emph{Okounkov bodies of finitely generated divisors},
International Mathematics Research Notices {\bf 132} (2013), 5, 1205--1221.

\bibitem{Bad} B\u adescu, L., Algebraic surfaces,  Universitext. Springer-Verlag, New York, 2001.

\bibitem{BKS} Bauer, Th., K\"uronya, A., Szemberg, T., \emph{Zariski chambers, volumes, and stable base loci},
  Journal f\"ur die reine und angewandte Mathematik \textbf{576} (2004), 209--233.


\bibitem{Ber90} Berkovich, V. G., \emph{Spectral theory and analytic geometry over non-Archimedean fields}. Mathematical Surveys and Monographs, 33. American Mathematical Society, Providence, RI, 1990. x+169 pp. ISBN: 0-8218-1534-2.

\bibitem{Bou12}  Boucksom, S.,
\emph{Corps d'Okounkov
[d'apr\`es Okounkov, Lazarsfeld--Musta\c t\u a et Kaveh-Khovanskii]},
S\'eminaire Bourbaki 1059 (2012), 38 pp.

\bibitem{BFJ08} Boucksom, S., Favre, C., Jonsson, M.,
\emph{Valuations and Plurisubharmonic Singularities.}
Publ. RIMS, Kyoto Univ. 44 (2008), 449--494.

%\bibitem{BFJ09} Boucksom, S., Favre, C., Jonsson, M.,
%\emph{Differentiability of volumes of divisors and a problem of Teissier.} J. Algebraic Geom. 18 (2009), no. 2, 279–308.

%\bibitem{BFJ} Boucksom, S., Favre, C., and Jonsson, M., \emph{A refinement of Izumi's theorem}, preprint arXiv:1209.4104

\bibitem{BKMS}
Boucksom, S., K\"uronya, A., MacLean, C. and Szemberg, T.,
\emph{Vanishing sequences and Okounkov bodies},
 Math. Ann. 361 (2015), no. 3-4, 811--834.

\bibitem{Cas00}
Casas-Alvero,~E., Singularities of plane curves, London Math. Soc.
  Lecture Note Ser., vol. 276, Cambridge University Press, 2000.

%\bibitem{CHMR} Ciliberto,~C., Harbourne,~B, Miranda,~R., Ro\'e,~J.
%\emph{Variations on Nagata's conjecture}. In ``A Celebration of Algebraic Geometry'',
% Brendan Hasse\mathcal {QM}, James McKernan, Jason Starr, Ravi Vakil, editors; 
% Clay Mathematics Proceedings {\bf 18} (2013).

%\bibitem{CEL01} Cutkosky, S. D., Ein, L., Lazarsfeld, R., \emph{Positivity and complexity of ideal sheaves.} Math. Ann. 321 (2001), no. 2, 213-234.

\bibitem{CutSri} Cutkosky, S. D., Srinivas V., \emph{On a problem of Zariski on dimensions of linear systems}. Annals of Math. (2) 137, (1993), no.3., 531--559.

%\bibitem{DKMS} Dumnicki, M., K\"uronya, A., Maclean, C. and Szemberg, T.,\emph{Seshadri constants via Okounkov functions and the Segre-Harbourne-Gimigliano-Hirschowitz Conjecture}, preprint \tex\mathcal {QM}t{arXiv:1304.0249}.



\bibitem{DHKRS}
Dumnicki, M., Harbourne, B., K\"uronya, A., Ro\'e, J. and Szemberg, T.,
\emph{Very general monomial valuations of $\mathbb P^2$ and a Nagata type conjecture}, Communications in Analysis and Geometry, \emph{to appear}.
 arXiv:1312.5549 [math.AG] 19 Dec 2013.


%\bibitem{ELS05} Ein,~L., Lazarsfeld,~R. and Smith, K., \emph{Uniform approximation of Abhyankar valuation ideals in smooth function fields}. Amer. J. Math. 125 (2003), no. 2, 409--440.

\bibitem{FJ04} Favre, C.,  and Jonsson, M., The valuative tree.
Lecture Notes in Mathematics, 1853. Springer-Verlag, Berlin, 2004. xiv+234 pp. ISBN: 3-540-22984-1.

\bibitem{FU} Fujita, T., \emph{On Zariski problem}, Proc. Japan Acad. Ser. A. Math. Sci 55 (179), no. 3, 106--110.

%\bibitem{Har97} Harbourne, B., \emph{Anticanonical rational surfaces}. Trans. Amer. Math. Soc. 349 (1997), no. 3, 1191--1208.

%\bibitem{Harb97} Harbourne, B., \emph{Birational morphisms of rational surfaces}, J. Alg. 190, (1997) 145--162.

\bibitem{HAG} Hartshorne, R., Algebraic Geometry, Graduate Texts in Math. 52,
Springer Verlag, 1977.

%\bibitem{Hir85} Hirschowitz, A., \emph{La m{\'e}thode d'{H}orace pour l'interpolation aplusieurs variables}, Manuscr. Math. \textbf{50} (1985), 337--388.

\bibitem{J} Jow,  S.-Y., \emph{Okounkov bodies and restricted volumes along very general curves}, Adv. Math. 223, 2010, No. 4., 1356--1371.



\bibitem{KK09} Kaveh, K., and Khovanskii, A.:
\emph{Newton--Okounkov bodies, semigroups of integral points, graded algebras and intersection theory}.
Annals of Mathematics 176, 2012, 925--978.

%\bibitem{Kur03} K\"uronya, A.,\emph{A divisorial valuation with irrational volume}. J. of Algebra 262 (2003), no. 2., 413--423.

\bibitem{KL} K\"uronya, A., Lozovanu, V.,
\emph{Local positivity of linear series}, arXiv:1411.6205v1, [math.AG] 23 Nov 2014.

\bibitem{KL2} K\"uronya, A., Lozovanu, V.,
\emph{Positivity of line bundles and \NO bodies}, arXiv:1506.06525v1 [math.AG] 22 Jun 2015.

\bibitem{KL3} K\"uronya, A., Lozovanu, V.,
\emph{Infinitesimal  \NO bodies and jet separation}, arXiv:1507.04339v1 [math.AG] 15 Jul 2015.


\bibitem{KLM} K\"uronya, A., Lozovanu, V., Maclean C., \emph{Convex bodies appearing as Okounkov bodies of
divisors}, Adv. Math. 229 (2012), no. 5, 2622--2639.

%\bibitem{PAG} Lazarsfeld, R., Positivity in algebraic geometry. I. Classical se\mathcal {QM}ing: line bundles and linear series. Ergebnisse der Mathematik und ihrer Grenzgebiete. 3. Folge.
% A Series of Modern Surveys in Mathematics [Results in Mathematics and Related Areas. 3rd Series. ], 48. Springer-Verlag, Berlin, 2004. xviii+387 pp. ISBN: 3-540-22533-1 .

\bibitem{LazMus09} Lazarsfeld, R., and Musta\c t\u a, M., \emph{Convex bodies associated to linear series}.
 Ann. Sci. \'Ec. Norm. Sup\'er. (4) 42 (5) 2009, 783-835.

%\bibitem{Man86} Manin, Yu\ I.:  Cubic forms -- algebra, geometry, arithmetic, North Holland (1986).

%\bibitem{Nag59}
%  Nagata, M.:
%  \emph{On the 14-th problem of Hilbert}.
%  Amer. J. Math. 81 (1959), 766--772

\bibitem{Ok96} Okounkov, A., \emph{Brunn-Minkowski inequality for multiplicities}.
Invent. Math. 125 (3) 1996, 405--411.


\bibitem{Ore02}  Orevkov, S. Yu., \emph{On rational cuspidal curves. I.
Sharp estimate for degree via multiplicities.} Math. Ann. 324 (2002), no. 4, 657--673.

\bibitem{Roe} Ro\'e, J., \emph{Local positivity in terms of Newton--Okounkov bodies}, preprint arXiv:1505.02051.

\bibitem {Ura} Urabe, T.,  \emph{Resolution of singularities of germs in characteristic positive associated with valuation rings of iterated divisor type}, MPI 1999, arXiv:math/9901048v3 [math.AG], 17 May 1999.

\bibitem{Vaq}  Vaqui\'e, M., \emph{Valuations and local uniformization}.
Singularity theory and its applications, 477--527,
Adv. Stud. Pure Math., 43, Math. Soc. Japan, Tokyo, 2006.

\bibitem{ZS75} Zariski, O., Samuel, P., Commutative algebra. Vol. II.
Reprint of the 1960 edition. Graduate Texts in Mathematics, Vol. 29.
Springer-Verlag, New York-Heidelberg, 1975. x+414 pp.

\end{thebibliography}

\end{document}